\documentclass[reqno,12pt,letterpaper]{amsart}
\usepackage{amsmath,amssymb,amsthm,graphicx,mathrsfs,url,bbm,MnSymbol}
\usepackage[usenames,dvipsnames]{color}
\usepackage[colorlinks=true,linkcolor=Red,citecolor=Green]{hyperref}

\def\?[#1]{\textbf{[#1]}\marginpar{\Large{\textbf{??}}}}
\newtheorem{thm}{Theorem}
\newtheorem{prop}{Proposition}

\newtheorem{lem}[prop]{Lemma}
\newtheorem{cor}[prop]{Corollary}
\newtheorem{rem}[prop]{Remark}

\newtheorem{conj}{Conjecture}
\numberwithin{equation}{section}
\numberwithin{prop}{section}

\DeclareMathOperator{\WF}{WF}
\DeclareMathOperator{\Res}{{\rm Res}}

\newcommand{\RR}{{\mathbb R}}
\newcommand{\CC}{{\mathbb C}}

\newcommand{\ZZ}{{\mathbb Z}}
\newcommand{\Ss}{{\mathbb S}}
\newcommand{\Hom}{\mathrm{Hom}}
\newcommand{\id}{\mathrm{Id}}

\begin{document}
\title{Generic twisted Pollicott--Ruelle resonances and zeta function at zero}

\author{Tristan Humbert}
\email{humbertt@imj-prg.fr}
\address{Institut de mathématiques de Paris rive gauche, Sorbonne Université, Campus Pierre et Marie Curie, 4 place Jussieu, 75005, Paris, France.}

\author{Zhongkai Tao}
\email{ztao@ihes.fr}
\address{Institut des Hautes \'Etudes Scientifiques, 35 route de Ghartres, 91440 Bures-sur-Yvette, France}%

\begin{abstract}
For a connected orientable closed surface $(\Sigma,g)$ of genus $G$ with Anosov geodesic flow, we show the existence of an open subset $\mathcal U_g$ of finite-dimensional irreducible representations of the fundamental group of its unit tangent bundle, whose complement has complex codimension at least one and such that for any $\rho \in \mathcal U_g$, the twisted Ruelle zeta function $\zeta_{g,\rho}(s)$ vanishes at $s=0$ to order ${\rm dim}(\rho)(2G-2)$ if $\rho$ factors through $\pi_1(\Sigma)$, and does not vanish otherwise. In the second case, we show that $\zeta_{g,\rho}(0)$ is given by the Reidemeister--Turaev torsion, thus extending Fried's conjecture to a generic set of acyclic (but not necessarily unitary) representations. We also show that the order of vanishing of the untwisted zeta function is constant for an open and dense subset of Anosov metrics in the connected component of a hyperbolic $3$-metric. Our proofs rely on computing the dimensions of the spaces of generalized twisted Pollicott--Ruelle resonant states at zero. 
\end{abstract}

\maketitle
\section{Introduction}
\subsection{Twisted Ruelle zeta function}
Let $(\Sigma,g)$ be a connected orientable closed surface of genus $G\geq 2$ and let $M=S\Sigma:=\{(x,v)\in T\Sigma\mid g(v,v)=1\}$ be its unit tangent bundle. Suppose that the geodesic flow $\varphi_t^g :M\to M$ is \emph{Anosov} (e.g., $g$ is negatively curved). We say that $g$ is an Anosov metric or that $(\Sigma,g)$ is an Anosov surface.

Let $r\in \mathbb N$ and let $\rho \in \mathrm{Hom}(\pi_1(M), \mathrm{GL}_r(\CC))$ be a $r$-dimensional representation of the fundamental group $\pi_1(M)$ of $M$. We define the \emph{twisted Ruelle zeta function} of $(M, g,\rho)$ for any $s\in \mathbb C$ with $\mathrm{Re}(s)\gg 1$, by the infinite product:
\begin{equation}
\label{eq:zeta}
\zeta_{g,\rho}(s)=\prod_{\gamma\in \mathcal{P}} \det(\mathrm{Id}-\rho([\gamma])e^{-s\ell_g(\gamma)}),
\end{equation}
where $\mathcal{P}$ is the set of primitive $g$-geodesics, $[\gamma]$ is the class of $\gamma$ in $\pi_1(M)$ and $\ell_g(\gamma)$ denotes the length of $\gamma$. The zeta function is convergent and holomorphic in a half plane $\{s\in \CC \mid \mathrm{Re}(s)\gg 1\}$ and admits a meromorphic extension to $\mathbb C$, see \S \ref{sec:Resonance}. In this paper, we study the order of vanishing $m(g,\rho)$ of the meromorphic extension of $\zeta_{g,\rho}(s)$ at $s=0.$ 

There is natural map $\pi_*: \pi_1(M)\to \pi_1(\Sigma)$ induced by the projection $\pi:M\to \Sigma$. Any representation of $\pi_1(\Sigma)$ induces a representation of $\pi_1(M)$. In this case, we say that the induced representation \emph{factors through $\pi_1(\Sigma)$}. Note, however, that there are also representations of $\pi_1(M)$ that do \emph{not} factor through $\pi_1(\Sigma)$, see \S \ref{sec:pi1}. We denote by $\mathrm{Hom}_{\mathrm{irr}}(\pi_1(M), \mathrm{GL}_r(\CC))$ the subset of irreducible representations. Note that it is an affine algebraic set by looking at its description using generators and relations. The main result of this paper is the following theorem.  
\begin{thm}
\label{maintheo}
    Let $(\Sigma,g)$ be a connected orientable closed Anosov surface of genus $G\geq 2$. There exists a subset $\mathcal U_g \subset \mathrm{Hom}_{\mathrm{irr}}(\pi_1(M), \mathrm{GL}_r(\CC))$ satisfying the following properties:
\begin{itemize}
    \item the subset $\mathcal U_g$ is open;
    \item its complement $\mathrm{Hom}_{\mathrm{irr}}(\pi_1(M), \mathrm{GL}_r(\CC))\setminus \mathcal U_g$ has complex codimension $\geq 1$;
    \item for any $\rho \in \mathcal U_g$,
    \begin{enumerate}
        \item \label{case1}if $\rho$ factors through $\pi_1(\Sigma)$, one has
    \begin{equation}
    \label{eq:mcase1}
        m(g,\rho)=-\mathrm{dim}(\rho)\chi(\Sigma)=\mathrm{dim}(\rho)(2G-2),
      \end{equation}  where $\chi(\Sigma)$ is the Euler characteristic of $\Sigma$;
    \item \label{case2} if $\rho$ does not factor through $\pi_1(\Sigma)$, one has $m(g,\rho)=0$.
    \end{enumerate}
\end{itemize}
\end{thm}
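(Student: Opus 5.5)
Our plan is to express $m(g,\rho)$ through twisted Pollicott--Ruelle resonances at $0$ and compute the relevant dimensions. By the Dyatlov--Zworski factorization (\S\ref{sec:Resonance}), $\zeta_{g,\rho}(s)$ is an alternating product of dynamical determinants of the Lie derivatives $\mathcal L_X^{(k)}$ acting on sections of $\Omega^k_0\otimes E_\rho$, where $\Omega^k_0$ denotes the forms annihilated by $\iota_X$ and $E_\rho$ is the flat bundle of $\rho$. Consequently
\[
m(g,\rho)=\sum_{k=0}^{2}(-1)^k\dim \mathrm{Res}^{k,\infty}_{\rho}(0),
\]
the alternating sum of the dimensions of the generalized resonant states at $0$. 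The first step is therefore to show that, for generic $\rho$, there are no Jordan blocks at $0$, so that generalized states coincide with resonant states. The second step is to identify the resonant states $\mathrm{Res}^{k}_\rho(0)$ cohomologically. Following Dang--Guillarmou--Rivière--Shen and Cekić--Paternain, one has the complex $(\mathrm{Res}^\bullet_\rho(0),d)$ together with maps to the twisted de Rham cohomology $H^\bullet(M,E_\rho)$.

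For an irreducible $\rho$ the description of $H^\bullet(M,E_\rho)$ splits into two cases. If $\rho$ does not factor through $\pi_1(\Sigma)$, then $\rho$ of the fiber generator is a nontrivial scalar by Schur's lemma, because the fiber class is central. The Gysin/Serre spectral sequence then gives $H^\bullet(M,E_\rho)=0$, i.e.\ the representation is acyclic. If instead $\rho$ factors through $\pi_1(\Sigma)$ and is irreducible and nontrivial, then $H^0=H^2(\Sigma)=0$, and $H^1(\Sigma,E_\rho)$ has dimension $-\dim(\rho)\chi(\Sigma)$. This is the source of the claimed order.

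The main obstacle is to prove that, on an open set of $\rho$ whose complement has codimension $\ge1$, the resonant states are ``as small as possible'': $\mathrm{Res}^0=\mathrm{Res}^2=0$, there is no semisimplicity defect, and $\mathrm{Res}^1$ is isomorphic to the corresponding cohomology. This gives $m=\dim H^1(\Sigma,E_\rho)$ in case \ref{case1} and $m=0$ in case \ref{case2}.

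We would attack this in two parts. First, we prove openness: dimensions of generalized resonant spaces are upper semicontinuous in $\rho$, since the spectral projector at $0$ varies holomorphically on the algebraic variety of representations, with the variable bundle trivialized locally. Hence the locus where the dimensions attain their minimal values is Zariski-open in each irreducible component. Second, it then suffices to exhibit one point in each component where the minimal values hold. For this we would use deformations. In case \ref{case2}, a unitary $\rho$ twisted by the character $e^{i\theta}$ of the fiber has $\mathrm{Res}^0_\rho(0)=0$, because unitarity together with a nontrivial fiber holonomy forbids flow-invariant sections. We would then analyse $\mathrm{Res}^1$ by a Pestov/Fourier-mode argument in the vertical variable, as for hyperbolic surfaces, showing that it vanishes. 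Alternatively, we perturb away using the holomorphic dependence.

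The hardest point is that the relevant components of $\mathrm{Hom}_{\mathrm{irr}}$ need not contain unitary representations. We would try to handle this by showing that the resonance multiplicities are locally constant along the continuous family of metrics as well, and reducing to a hyperbolic metric. For a hyperbolic metric, representation theory of $\mathrm{PSL}_2(\RR)$ (first band and the vertical Fourier decomposition) computes the resonant states explicitly for every $\rho$ whose twisted Laplacian on $\Sigma$ has no small eigenvalues. Finally, upper semicontinuity propagates the generic value to an open set with complement of codimension $\ge 1$, and adjusting for the possibly non-connected component structure finishes the proof.
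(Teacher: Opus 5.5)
Your overall skeleton matches the paper's: the resonance formula for $m(g,\rho)$, the cohomology computation of Lemma \ref{lemm:coho}, analytic dependence of the spectral projectors on $\rho$, and reduction to one good point per component. The gap is the step you yourself flag as hardest, and your proposed fix does not work.

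\textbf{Why the metric-deformation fallback fails.} Even granting a complete hyperbolic computation (for $m(g_0,\rho)$ itself this is in \cite{FS23,BFS23}), you cannot transfer it to $g$. Multiplicities of resonances at $0$ are only upper semicontinuous in the metric. You give no reason for them to be locally constant, and they need not be: in dimension $3$, \cite{zeta3} shows the order of vanishing changes under metric perturbation. Semicontinuity alone cannot carry ``the bad set in $\rho$ is a proper subset of the component'' from a hyperbolic $g_0$ to $g$. Along a path of metrics this property is open, but nothing makes it closed.

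\textbf{Why the fiber-character twist is not enough.} Twisting a unitary representation of $\pi_1(\Sigma)$ by a character that is nontrivial on $c$ only reaches components with $\rho(c)^{2G-2}=\id$. It misses every component where $\prod_i[\rho(a_i),\rho(b_i)]$ is a nontrivial scalar.

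\textbf{The missing idea.} This is Proposition \ref{prop:connected2}, built on \cite{RBC}. After removing a subset of dimension $\leq (2G-1)r^2$, $\Hom_{\mathrm{irr}}(\pi_1(M),\mathrm{GL}_r(\CC))$ is a finite union of smooth, path-connected pieces $\mathcal V_j$ of dimension $(2G-1)r^2+1$, indexed by the scalar $\rho(c)$. This removal is also how components containing no good point get absorbed into the codimension-one complement. Each $\mathcal V_j$ contains an explicit unitary irreducible representation built from clock-and-shift matrices with $[A,B]=e^{2\pi i/r}\id$. No change of metric is then needed. The Dyatlov--Zworski argument of Proposition \ref{prop:DZ} works for unitary twists of the given Anosov metric, and the identity theorem along paths in the smooth connected $\mathcal V_j$ propagates genericity.

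\textbf{Two smaller points.}
First, the formula is $m(g,\rho)=\sum_k(-1)^{k+1}\dim\Res_0^{k,\infty}(\rho,0)$, not $\sum_k(-1)^k$.
Second, you need the lower bound of Lemma \ref{lemm:3}: $\Res_0^{1,1}(\rho,0)\cong H^1(M,\rho)$ whenever $\Res_0^{0,1}(\rho,0)=\Res_0^{2,1}(\rho,0)=0$. Without it, the generic value of $\dim\Res_0^{1,\infty}(\rho,0)$ could fall below $\dim H^1(M,\rho)$.
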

In Case \ref{case1}, the study of the order of vanishing of $\zeta_{g,\rho}(s)$ at $s=0$ was initiated by Fried \cite[Theorem 1 and Corollary 2]{Fri} for hyperbolic metrics and unitary representations $\rho$ of $\pi_1(\Sigma).$ His result was recently extended by Frahm and Spilioti in \cite[Corollary C]{FS23}. They showed that  for a hyperbolic metric $g$ and any $\rho \in \mathrm{Hom}(\pi_1(\Sigma), \mathrm{GL}_r(\CC))$, one has $m(g,\rho)=-\mathrm{dim}(\rho)\chi(\Sigma)$. When $g$ is hyperbolic, $\zeta_{g,\rho}(s)$ can be expressed as a quotient of shifted {twisted Selberg zeta functions} whose behavior at $s=0$ can be analyzed using a twisted \emph{Selberg trace formula}. The zeros of $\zeta_{g,\rho}(s)$ can be computed from the eigenvalues of the {twisted Laplacian} $\Delta_\rho$ acting on $L^2(\Sigma,\CC)$, see for instance \cite[Theorem 4.2.6]{FS23}.

For an Anosov (not necessarily hyperbolic) metric $g$\footnote{Their work applies to the more general setting of contact Anosov $3$-flows.}  and the trivial representation $\rho_{\mathrm {triv}}$, Dyatlov and Zworski showed in \cite{zazi} that $m(g,\rho_{\mathrm {triv}})=-\chi(\Sigma)$. When $g$ has non-constant curvature, there is no Selberg trace formula and no direct relation to the spectrum of a twisted Laplacian on the base $\Sigma$. They instead use that $m(g,\rho_{\mathrm {triv}})$ can be expressed as an alternate sum of dimensions of generalized eigenspaces of the Anosov geodesic vector field $X$ acting on specially designed \emph{anisotropic spaces}. The formula for $m(g,\rho_{\mathrm {triv}})$ then follows from an explicit computation of these dimensions. 

The order of vanishing of $\zeta_{g,\rho}(s)$ for general Anosov $3$-flows was studied by Cekić and Paternain in \cite{CP20,CP25}. In particular, \eqref{eq:mcase1} was obtained in \cite[Corollary 1.9]{CP20} under the condition that $\rho$ is unitary. In higher dimensions, the order of vanishing of the (non-twisted) zeta function near $3$-hyperbolic metrics was studied by Cekić, Delarue, Dyatlov and Paternain \cite{zeta3}. They showed that the order of vanishing of the zeta function at $s=0$ is \emph{not} a topological invariant \cite[Theorem 1]{zeta3} but conjectured that it should still be constant on a set of generic metrics \cite[Conjecture 1]{zeta3}. 

Case \ref{case2} is related to the \emph{Fried conjecture.} Fried's conjecture \cite{Fri86,Fri95} states that for an acyclic unitary representation $\rho$ of $\pi_1(M)$, the twisted Ruelle zeta function $\zeta_{g,\rho}(s)$ is well-defined and nonvanishing at $0$, and
\begin{equation}
\label{eq:Fried}
    |\zeta_{g,\rho}(0)|^{(-1)^{\frac{\dim M-1}{2}}}=\tau_{\rho}(M),
\end{equation}
where $\tau_{\rho}(M)$ is the Reidemeister torsion. It is known for locally symmetric spaces \cite{Fri86,Fri95,MS91,She17}. When $\dim (M)= 3$, it is known for unitary representations under some mild assumptions \cite{SM96,DGRS}, see Shen \cite{She21} for a recent survey. It is natural to ask if an analog of \eqref{eq:Fried} holds for non-unitary acyclic representations on $M=S\Sigma$ with respect to the geodesic flow, see for instance \cite{Mue20,BFS23,CD24,BS25}. For Fried's conjecture for pseudo-Anosov flows, see \cite{JZ24}.

Theorem \ref{maintheo} computes the order of vanishing of the zeta function at $s=0$ for an Anosov metric $g$, and for a generic representation $\rho\in \mathrm{Hom}_{\mathrm{irr}}(\pi_1(M), \mathrm{GL}_r(\CC))$. The main novelty from the previously cited works is that we consider \emph{non-unitary} twists $\rho$ of \emph{non-hyperbolic} metrics $g$. 
\subsection{Twisted Pollicott--Ruelle resonances.} 
\label{sec:introRuelle}Theorem \ref{maintheo} follows from a more general statement about \emph{twisted Pollicott--Ruelle resonances}, see Theorem \ref{theo2} below.

 Let $X=\tfrac d{dt}|_{t=0}\varphi_t^g$ denote the generator of the geodesic flow $(\varphi_t^g)_{t\in \mathbb R}.$
We write $d^{\nabla_\rho}$ for the differential on the flat vector bundle $\mathcal{E}_{\rho}$ induced by $\rho\in \mathrm{Hom}( \pi_1(M), \mathrm{GL}_r(\CC))$, see \S \ref{sec:repr} for the precise definitions. The vector field $X$ lifts to the bundle $\mathcal{E}_{\rho}$ as the Lie derivative  $\mathcal{L}_{X^\rho} =\iota_X d^{\nabla_\rho}+d^{\nabla_\rho} \iota_X$, where $\iota_X$ denotes the contraction by~$X$.

 For $k=0,1,2$, denote by $\Omega^k_0$ the space of smooth $k$-differential forms on $M$ that are in the kernel of $\iota_X$. Note that the action of $\mathcal L_{X^\rho}$ extends to $\Omega_0^k\otimes \mathcal E_\rho.$ Since $(\varphi_t^g)_{t\in \mathbb R}$ is Anosov, one can associate to $\mathcal L_{X^\rho}$ a discrete spectrum $\mathrm{Res}^k(X,\rho)\subset \CC$, the \emph{twisted Pollicott--Ruelle resonances}, by making $\mathcal L_{X^\rho}$ act on \emph{anisotropic spaces}, see \S \ref{sec:Resonance} for a precise definition. For a twisted Pollicott--Ruelle resonance $\lambda \in \CC$, the corresponding (generalized) eigenvectors are called (generalized) \emph{resonant states} at $\lambda$. Let $\mathrm{Res}^{k,1}_0(\rho,0)$ (resp.  $\mathrm{Res}^{k,\infty}_0(\rho,0)$) denote the space of \emph{resonant states at $0$} (resp. \emph{generalized resonant states at $0$}) for the action of $\mathcal L_{X^\rho}$ on $\Omega_0^k\otimes \mathcal E_\rho$. The order of vanishing at $s=0$ of $\zeta_{g,\rho}(s)$ is given by 
\begin{equation}
\label{eq:orderofvanish}
    m(g,\rho)=\mathrm{dim} \big(\Res_{0}^{1,\infty}(\rho,0)\big) - \mathrm{dim} \big(\Res_{0}^{0,\infty}(\rho,0)\big)-\mathrm{dim}\big(\Res_{0}^{2,\infty}(\rho,0)\big),
\end{equation}
see \S \ref{sec:Resonance} for a proof. We show the following result.

\begin{thm}
\label{theo2}
Let $(\Sigma,g)$ be a connected orientable closed Anosov surface of genus $G\geq 2$. There exists a subset $\mathcal U_g \subset \mathrm{Hom}_{\mathrm{irr}}(\pi_1(M), \mathrm{GL}_r(\CC))$ satisfying the following properties:
\begin{itemize}
    \item the subset $\mathcal U_g$ is open;
    \item its complement $\mathrm{Hom}_{\mathrm{irr}}(\pi_1(M), \mathrm{GL}_r(\CC))\setminus \mathcal U_g$ has complex codimension $\geq 1$;
    \item for any $\rho \in \mathcal U_g$, 
    \begin{enumerate}
        \item if $\rho$ factors through $\pi_1(\Sigma)$, one has
    \begin{equation}
    \label{eq:dim}
    \begin{split}
        &\mathrm{dim}(\Res_0^{k,\infty}(\rho,0))=0, \quad k=0,2, 
        \\ &\mathrm{dim}(\Res_0^{1,\infty}(\rho,0))=\mathrm{dim}(\Res_0^{1,1}(\rho,0))=-\mathrm{dim}(\rho)\chi(\Sigma). 
        \end{split}
          \end{equation}
        \item if $\rho$ does not factor through $\pi_1(\Sigma)$, one has
        \begin{equation}\label{eq:dim2}
          \Res_0^{k,\infty}(\rho,0)=0, \quad k=0,1,2. 
        \end{equation}
    \end{enumerate}
\end{itemize}
\end{thm}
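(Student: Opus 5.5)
The proof will reduce all dimension counts to twisted de Rham cohomology, following the Dyatlov--Zworski strategy \cite{zazi} as extended to twisted flat bundles by Cekić--Paternain \cite{CP20}. We then use a perturbation/semicontinuity argument in the representation variable to handle non-unitary $\rho$.

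\emph{Step 1 (resonant states at $0$).} For $k=0$, a resonant state $u$ with $\mathcal L_{X^\rho}u=0$ and wavefront set in $E_u^*$ is a distributional section. By the standard regularity argument (the pairing/positivity argument when $\rho$ is unitary, replaced here by a pairing between $u$ and coresonant states of the dual representation $\rho^*$), we would like to show that $u$ is smooth and $d^{\nabla_\rho}u=0$. That is, $u$ is a flat section, which is impossible for irreducible nontrivial $\rho$. Since $d\alpha$ is nondegenerate on $\ker\iota_X$, the case $k=2$ reduces to $k=0$ via $u\mapsto u\, d\alpha$. For $k=1$, we use the map $u\mapsto d^{\nabla_\rho}u$, which sends $\Res^{1,1}_0$ to $\Res^{2,1}_0=0$ (modulo the $\alpha$-component). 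Resonant states are then closed, which gives a map
\[
\Res_0^{1,1}(\rho,0)\to H^1(M;\mathcal E_\rho).
\]
We will prove this map is injective using the structure of $\Res^{0}$: if $u=d^{\nabla_\rho}f$, then $Xf=0$ and $f$ is a resonant state in degree $0$, hence $f=0$. Surjectivity onto the subspace of classes with vanishing pairing against $[\alpha]$-type terms is obtained by solving $\mathcal L_{X^\rho}$ with the meromorphic resolvent, as in \cite{zazi}. This gives $\dim\Res^{1,1}_0=\dim H^1(M;\mathcal E_\rho)-\varepsilon$, where the correction $\varepsilon$ comes from $H^0$-type terms.

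\emph{Step 2 (topology).} By the Gysin sequence of the circle bundle $M\to\Sigma$ with Euler class $\chi(\Sigma)\neq0$, for irreducible $\rho$:
\begin{itemize}
\item if $\rho$ factors through $\pi_1(\Sigma)$, then $H^1(M;\mathcal E_\rho)\cong H^1(\Sigma;\mathcal E_\rho)$, which by Euler characteristic has dimension $-\dim\rho\,\chi(\Sigma)$;
\item if $\rho$ does not factor, the fiber class acts by a nontrivial scalar (Schur), so the fiber cohomology vanishes and $\rho$ is acyclic.
\end{itemize}

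\emph{Step 3 (genericity: no Jordan blocks and the degree-$0,2$ vanishing).} The semisimplicity statements $\Res^{k,\infty}=\Res^{k,1}$, and the vanishing for nonunitary $\rho$, may fail pointwise. We handle this by semicontinuity. The dimension of generalized resonant states at $0$ is the rank of a Riesz projector $\Pi_\rho=\frac1{2\pi i}\oint(\mathcal L_{X^\rho}-s)^{-1}ds$. The family $\rho\mapsto \mathcal L_{X^\rho}$ depends holomorphically on $\rho$ (locally, after trivializing the bundles using a real-analytic chart of $\mathrm{Hom}_{\mathrm{irr}}$). Hence the total rank of resonances near $0$ is locally constant, and the set where the rank at exactly $0$ is minimal, and the Jordan structure is trivial, is the complement of an analytic subset. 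We define $\mathcal U_g$ to be this set. It is then open, and its complement has codimension $\ge1$ provided it is nonempty on each irreducible component. To show nonemptiness, we exhibit a point where the desired dimensions hold. Unitary irreducible representations (handled by Step 1 with genuine positivity, as in \cite{CP20}) exist in each component of the factoring locus. For the non-factoring components, we use unitary twists by a nontrivial character of the fiber class. Alternatively, we use the identity that the alternating sum \eqref{eq:orderofvanish} is locally constant in $\rho$ together with the lower bound $\dim\Res^{1,1}_0\ge$ the Step 1 count.

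\emph{Main obstacle.} The main difficulty is Step 3's nonemptiness on every irreducible component of $\mathrm{Hom}_{\mathrm{irr}}$, since components need not contain unitary points. I would first try to show that $\mathrm{Hom}_{\mathrm{irr}}$ is connected within each of the two classes (factoring and non-factoring with fixed fiber eigenvalue), using known connectedness of character varieties of surface groups. Failing that, I would adapt the regularity argument in Step 1 to directly show that Jordan blocks at $0$ force a nonzero $\rho$-invariant pairing, which is possible only on a proper analytic subset.
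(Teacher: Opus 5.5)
Your overall architecture matches the paper's. You use positivity at unitary points (Proposition \ref{prop:DZ}), analytic dependence of the spectral projector at $0$ on $\rho$, the isomorphism $\Res_0^{1,1}(\rho,0)\to H^1(M,\rho)$ once $\Res_0^{0,1}(\rho,0)=\Res_0^{2,1}(\rho,0)=0$ (Lemma \ref{lemm:3}), and the Gysin computation. But the step you flag as the main obstacle is a genuine gap, and the whole non-unitary case rests on it.

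Your Step 1 hope of proving smoothness of resonant $0$-forms for non-unitary $\rho$ by pairing with coresonant states cannot work. Theorems \ref{theo:3} and \ref{theo:4} exhibit irreducible $\rho$, both factoring and non-factoring, with nonzero resonant $0$-forms at $0$, and in Theorem \ref{theo:4} also Jordan blocks. So your only input is positivity at unitary points. Since the exceptional sets are only locally analytic, they can be propagated only inside a connected piece that contains a unitary point. You never show that such pieces exhaust $\Hom_{\mathrm{irr}}(\pi_1(M),\mathrm{GL}_r(\CC))$ up to codimension one.

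The paper supplies this in Proposition \ref{prop:connected2}:
\begin{itemize}
\item It restricts to the Zariski-open set $\mathcal V$ where some pair $\rho(a_\ell),\rho(b_\ell)$ is regular with common centralizer $\CC\,\id$. There the product-of-commutators map is a submersion, so $\mathcal V$ is smooth of dimension $(2G-1)r^2+1$.
\item The complement of $\mathcal V$ has dimension at most $(2G-1)r^2$ and is thrown wholesale into the exceptional set. So, contrary to your formulation, you do not need a good point on every irreducible component.
\item $\mathcal V$ splits into pieces $\mathcal V_j$ with $\rho(c)=e^{2\pi i j/(r(2G-2))}\id$. Each is path-connected, via the Rapinchuk--Benyash-Krivetz--Chernousov results on fibres of the commutator map, and each contains an explicit unitary point.
\item An open--closed argument along a path from that unitary point then finishes the proof.
\end{itemize}

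Your two fallbacks do not close the gap.
\begin{itemize}
\item Twisting a unitary irreducible representation of $\pi_1(\Sigma)$ by a character only gives $\rho(c)=\zeta\,\id$ with $\zeta^{2G-2}=1$. Every character kills $c^{2G-2}=[a_1,b_1]\cdots[a_G,b_G]$. Irreducibility, however, only forces $\zeta^{r(2G-2)}=1$. So for $r\geq 2$ the classes with $\zeta^{2G-2}\neq 1$, for example $\zeta=e^{2\pi i/(r(2G-2))}$, are missed. The paper reaches them with clock-and-shift matrices $A,B\in U(r)$ satisfying $[A,B]=e^{2\pi i/r}\id$, placed in the first two handles.
\item The alternating sum is not known to be locally constant. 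What is locally constant in $\rho$ is the signed count of zeros and poles of $\zeta_{g,\rho}$ in a small disc around $s=0$, not the order $m(g,\rho)$ at $s=0$, since zeros and poles may move onto or off $0$. Even if $m$ were locally constant, it would not determine the individual dimensions in \eqref{eq:dim}, and these genuinely jump (Theorem \ref{theo:4}).
\end{itemize}
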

It is clear from \eqref{eq:orderofvanish} that Theorem \ref{maintheo} follows from Theorem \ref{theo2}. Theorem \ref{theo2} shows that for a generic $\rho\in \mathrm{Hom}(\pi_1(M),\mathrm{GL}_r(\CC))$, the space of resonant states at $0$ for $k=0,2$ is trivial and $\mathcal L_{X^\rho}$ has no Jordan block for $k=1$, with a corresponding space of resonant states of dimension $-\mathrm{dim}(\rho)\chi(\Sigma)$ in Case \ref{case1} and $0$ in Case \ref{case2}. 

For an acyclic representation $\rho$, one can define the \emph{Reidemeister--Turaev torsion}: $$\tau_{\mathfrak{e}_{\mathrm{geod}},\mathfrak{o}}(\rho)\in \CC\setminus \{0\}.$$ Here, $\mathfrak{e}_{\mathrm{geod}}$ is the Euler structure induced by the geodesic vector field, and $\mathfrak{o}$ is a homological orientation, see \cite[\S 10]{CD24} and \cite[\S 2]{BFS23} for the detailed definitions. In particular, when $\rho$ is unitary, one has $|\tau_{\mathfrak{e}_{\mathrm{geod}},\mathfrak{o}}(\rho)|=\tau_\rho(M)$ where $\tau_\rho(M)$ is the Reidemeister torsion. In \cite[Theorem A]{BFS23}, the authors showed that for $g_0$ hyperbolic and $\rho$ an irreducible representation which does not factor through $\pi_1(\Sigma)$\footnote{Such a representation has to be acyclic, see Lemma \ref{lemm:coho}.}, one has
\begin{equation}
    \zeta_{g_0,\rho}(0)^{-1}=\pm \tau_{\mathfrak{e}_{\mathrm{geod}},\mathfrak{o}}(\rho)=\pm \det(\id-\rho(c))^{2G-2},
\end{equation}
where $c\in \pi_1(M)$ is defined in \eqref{eq:pi1pre}.
This can be seen as a generalization of \eqref{eq:Fried} in the non-unitary case. In this paper, we extend their result for a non-hyperbolic metric $g$ and a set of generic representations.
 
\begin{cor}
\label{cor}
    Let $(\Sigma,g)$ be a connected orientable closed negatively curved surface of genus $G\geq 2$. There exists a subset $\mathcal U_g \subset \mathrm{Hom}_{\mathrm{irr}}(\pi_1(M), \mathrm{GL}_r(\CC))$ satisfying the following properties:
\begin{itemize}
    \item the subset $\mathcal U_g$ is open;
    \item its complement $\mathrm{Hom}_{\mathrm{irr}}(\pi_1(M), \mathrm{GL}_r(\CC))\setminus \mathcal U_g$ has complex codimension $\geq 1$;
    \item for any $\rho \in \mathcal U_g$, 
    which does not factor through $\pi_1(\Sigma)$, one has
       \begin{equation}\label{eq:Reide-Tur}
           \zeta_{g,\rho}(0)^{-1}=\pm \tau_{\mathfrak{e}_{\mathrm{geod}},\mathfrak{o}}(\rho)=\pm \det(\id-\rho(c))^{2G-2}.
       \end{equation}
 \end{itemize}
\end{cor}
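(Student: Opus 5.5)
We will deduce Corollary \ref{cor} from Theorem \ref{theo2} together with the hyperbolic result \cite[Theorem A]{BFS23}, by a deformation argument in the metric. We take $\mathcal U_g$ to be the set given by Theorem \ref{theo2}. For $\rho\in\mathcal U_g$ not factoring through $\pi_1(\Sigma)$, \eqref{eq:dim2} gives $m(g,\rho)=0$ by \eqref{eq:orderofvanish}. So $\zeta_{g,\rho}(0)$ is a well-defined nonzero complex number, and it remains to identify it with $\pm\tau_{\mathfrak e_{\mathrm{geod}},\mathfrak o}(\rho)^{-1}$.

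Since $g$ is negatively curved and $\Sigma$ is a surface, uniformization gives a hyperbolic metric $g_0$ in the conformal class of $g$. The path $g_t=e^{2(1-t)f}g_0$, chosen so that $g_1=g$ and $g_0$ is hyperbolic, need not stay negatively curved. Instead we use the path of metrics $g_t=(1-t)g_0+tg$, or more robustly any path inside the (path-connected, by Gauss--Bonnet/uniformization arguments) space of negatively curved metrics. All $g_t$ are Anosov, and by structural stability the unit tangent bundles are identified with $M$ compatibly with $\pi_1(M)$. Along the path, the Euler structure $\mathfrak e_{\mathrm{geod}}$ is constant, since the geodesic vector fields are homotopic through nonvanishing fields. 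Hence the torsion is unchanged.

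The key step is to show that $t\mapsto \zeta_{g_t,\rho}(0)$ is constant whenever $0$ is not a resonance. We will do this via the variation formula for $\log\zeta$ at $s=0$ in the acyclic regime, in the style of Dang--Guillarmou--Rivière--Shen \cite{DGRS}. The derivative $\partial_t\log\zeta_{g_t,\rho}(0)$ is a flat trace of $\iota_{\dot X_t}$ composed with the holomorphic part of the resolvent of $\mathcal L_{X_t^\rho}$ at $0$ on the complex $\Omega^\bullet\otimes\mathcal E_\rho$. The acyclicity of the resonant complex at $0$ makes this derivative vanish, because the supertrace becomes the trace of a commutator with $d^{\nabla_\rho}$.

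The main obstacle is that for a given $\rho$, the hypothesis $\rho\in\mathcal U_{g_t}$, i.e.\ $\Res_0^{k,\infty}(\rho,0)=0$ for $g_t$, might fail at finitely many parameters $t$. Such points are possible because the DGRS argument needs no resonance at $0$ along the whole path. We will handle this as follows. First prove the identity for $\rho$ in the nonempty open set $\mathcal V$ where the condition holds along the whole path. Then use holomorphy in $\rho$: both sides of \eqref{eq:Reide-Tur} are meromorphic functions on the irreducible components of $\mathrm{Hom}_{\mathrm{irr}}$ of representations not factoring through $\pi_1(\Sigma)$. For $\zeta_{g,\rho}(0)$ this follows from analytic dependence of the resolvent on $\rho$. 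They agree on the open set $\mathcal V$, so they agree on $\mathcal U_g$ by analytic continuation. This requires checking that $\mathcal V$ meets each component, which we expect to get by shrinking the path so that Theorem \ref{theo2}'s genericity holds uniformly. Finally, the second equality in \eqref{eq:Reide-Tur} is the explicit torsion computation already in \cite{BFS23}, which depends only on the topology of $M$.
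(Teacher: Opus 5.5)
\textbf{Gap: the nonempty open set of representations that are good along the whole path.} Your argument needs, in each $\mathcal V_j$, a nonempty open set of $\rho$ such that $0$ is not a resonance of $(g_t,\rho)$ for \emph{every} $t\in[0,1]$. You do not prove this. Every later step (BFS at $t=0$, DGRS local constancy in $t$, analytic continuation in $\rho$ at $t=1$) is applied only to such $\rho$, so the proof does not close without it.

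Theorem~\ref{theo2}, applied to each $g_t$, only says that for each fixed $t$ the bad set $Z_t\subset\mathcal V_j$ is a proper analytic subset. The set you need is $\mathcal V_j\setminus\bigcup_{t\in[0,1]}Z_t$, an intersection of uncountably many generic sets. Genericity at fixed $t$ gives no control over it: even a single point $Z_t=\{p(t)\}$ moving merely continuously in $t$ can sweep out an open set.

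Your remedy, ``shrinking the path'', does not help as stated. The endpoints $g_0$ (hyperbolic, needed for BFS) and $g_1=g$ are fixed. If instead you subdivide the path, you get different good representations on different pieces and nothing yet linking them. (Minor point: the conformal path $e^{2su}g_0$ does stay negatively curved, since $e^{2su}K_{e^{2su}g_0}=-1+s\,\Delta_{g_0}u$ is affine in $s$. It is the linear path $(1-t)g_0+tg$ that has no such guarantee.)

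\textbf{How to close it within your route.} You do not need a single $\rho$ that is good for all $t$; use the open--closed device from the paper's proof of Theorem~\ref{theo2}. Set $\phi_t(\rho)=\zeta_{g_t,\rho}(0)^{-2}$. It is holomorphic on $\mathcal V_j\setminus Z_t$, which is connected as the complement of a nowhere dense analytic subset of the connected manifold $\mathcal V_j$. Let $I$ be the set of $t$ for which $\phi_t\equiv\det(\id-\rho(c))^{4G-4}$; the right-hand side is a constant on $\mathcal V_j$.
\begin{itemize}
\item \cite[Theorem A]{BFS23} gives $0\in I$.
\item Fix any $t_1$ and $\rho_1\notin Z_{t_1}$. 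Joint openness of $\{(t,\rho):\rho\notin Z_t\}$, from perturbation theory in $(t,\rho)$, gives a box $(t_1-\delta,t_1+\delta)\times B(\rho_1,\varepsilon)$ with no resonance at $0$.
\item On that box, DGRS gives $\phi_t=\phi_{t_1}$.
\item The identity theorem on $\mathcal V_j\setminus(Z_t\cup Z_{t_1})$, together with density, then shows that $I$ is both open and closed, so $1\in I$.
\end{itemize}

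\textbf{The paper's route.} The paper sidesteps the issue by working with the Chaubet--Dang dynamical torsion. That torsion is defined for every acyclic $\rho$ even when $0$ is a resonance, and it agrees with $\pm\zeta(0)^{-1}$ when $0$ is not a resonance. It is invariant along the Ricci-flow path of negatively curved metrics \cite[Theorem 5]{CD24}. It is also invariant along paths of acyclic representations in $\mathcal V_j$, on which the Turaev torsion is constant \cite[Theorem 6]{CD24}. As a result the paper only has to intersect two generic sets: it picks $\rho_0\in\mathcal U_g\cap\mathcal U_{g_0}\cap\mathcal V_j$ and applies \cite[Theorem A]{BFS23} at $(g_0,\rho_0)$.
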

\subsubsection{Dimensions of the space of generalized resonant states.} We note that \eqref{eq:dim} is not satisfied for every $\rho\in \pi_1(M)$ in Case \ref{case1}. Indeed, for $\rho=\rho_{\mathrm{triv}}$, Dyatlov and Zworski  showed in \cite[Proposition 3.1]{zazi} that
\begin{align*}&\mathrm{dim}(\Res_0^{k,1}(\rho_{\mathrm{triv}},0))=1, \quad k=0,2, 
\\ &\mathrm{dim}(\Res_0^{1,\infty}(\rho_{\mathrm{triv}},0))=\mathrm{dim}(\Res_0^{1,1}(\rho_{\mathrm{triv}},0))=b_1(M), 
\end{align*}
where $b_1(M)$ is the first Betti number of $M$. Moreover, using the work of Naud and Spilioti \cite{NS22}, we obtain the following result.
\begin{thm}
\label{theo:3}
    Let $(\Sigma,g)$ be a connected orientable closed hyperbolic surface of genus $G$. Let $\rho={\rm Ad}$ be the adjoint representation of $\mathrm{SL}(2,\mathbb R)$. Then 
    \begin{equation}
        \label{eq:orderofvan}
        \begin{split}
&\mathrm{dim}(\Res_{0}^{k,\infty}({\rm Ad},0))=2G+1,\quad k=0,2;
\\&\mathrm{dim}(\Res_{0}^{1,\infty}({\rm Ad},0))=10G-4.
\end{split}
    \end{equation}
\end{thm}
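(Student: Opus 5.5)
The plan is to compute the three dimensions separately. For $k=0,2$ we use the relation between resonant states at zero and twisted cohomology. For $k=1$ we use the formula \eqref{eq:orderofvanish} combined with the known order of vanishing of $\zeta_{g,{\rm Ad}}$ at $0$ coming from \cite{NS22}.

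\emph{Step 1: the degrees $k=0,2$.} For $k=0$, a resonant state $u\in \Res_0^{0,1}({\rm Ad},0)$ satisfies $\mathcal L_{X^{\rm Ad}}u=0$. Following the argument of Dyatlov--Zworski \cite[Proposition 3.1]{zazi}, wavefront-set considerations together with the Anosov property show that $u$ is smooth. Since ${\rm Ad}$ is not unitary, we cannot directly use a positivity argument. Instead we use the homogeneous structure: $M=\Gamma\backslash \mathrm{PSL}(2,\RR)$, and the flat bundle $\mathcal E_{\rm Ad}$ is trivialized as $M\times \mathfrak{sl}_2$ via the left-invariant frame. Under this trivialization $\mathcal L_{X^{\rm Ad}}$ becomes $X\otimes \id + \id\otimes \mathrm{ad}(X)$, with $\mathrm{ad}(X)$ having eigenvalues $-1,0,1$ on $\mathfrak{sl}_2$ (Cartan element $X$, unstable and stable directions $U_\pm$).

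Hence resonant states of $\mathcal L_{X^{\rm Ad}}$ at $0$ correspond to scalar resonant states of $X$ at $0$ and $\pm1$ on the corresponding components, or more precisely on the spaces of sections of the horocyclic line bundles. Scalar Pollicott--Ruelle resonances of $X$ on $\Omega^0_0$ lie in $\{\mathrm{Re}\,\lambda\le 0\}$ in the first band, so only the components with shift toward the imaginary axis contribute. By the band structure of Dyatlov--Faure--Guillarmou, the first band at $\lambda$ corresponds to Laplace eigenvalues, and resonances at $-1$ correspond to the second band, i.e.\ holomorphic quadratic differentials. Concretely:
\begin{itemize}
\item the constant function in the $X$-component gives $1$;
\item the component twisted by eigenvalue $1$ of $\mathrm{ad}(X)$ pairs with scalar resonances at $-1$, which are $U_-$-invariant states built from holomorphic quadratic differentials, of dimension $3G-3$ modulo the constraints from the other components.
\end{itemize}
The count $2G+1$ must then come out after accounting for the coupling. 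I would rather cite \cite{NS22}, who compute the twisted Selberg zeta function for symmetric powers, and identify the counts via the dictionary of \S\ref{sec:Resonance}. Duality (the Hodge star or a contact-volume isomorphism $\Omega^2_0\cong\Omega^0_0$, exchanging $\rho$ with its dual, with ${\rm Ad}^*\cong{\rm Ad}$ via the Killing form) then gives the same dimension for $k=2$.

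\emph{Step 2: Jordan blocks and the degree $k=1$.} We must also show there are no Jordan blocks for $k=0,2$. This follows from a pairing argument: the Killing form gives a nondegenerate pairing between resonant and coresonant states, and one shows the pairing of a resonant state with itself in $\Omega^0\times\Omega^3$ is nonzero. For $k=1$, by \eqref{eq:orderofvanish},
\[
\dim\Res_0^{1,\infty}({\rm Ad},0)=m(g,{\rm Ad})+2(2G+1).
\]
Naud--Spilioti \cite{NS22} express $\zeta_{g,{\rm Ad}}$ through Selberg zeta functions twisted by the $3$-dimensional representation, and from the trace formula one reads off $m(g,{\rm Ad})=6G-6$. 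This yields $6G-6+4G+2=10G-4$.

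\emph{Main obstacle.} The main obstacle is Step 1: proving the exact count $2G+1$ and the absence of Jordan blocks for a non-unitary twist. There the standard $L^2$ positivity argument fails, and one must carefully match the decomposition under $\mathrm{ad}(X)$ with the scalar first and second band resonances. As a consistency check, the result should be compatible with $H^1(\Gamma,\mathfrak{sl}_2)$, which has dimension $6G-6$.
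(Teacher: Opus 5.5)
\textbf{Gap: the count $2G+1$ is never derived.} Nothing in your proposal actually produces $\dim\Res_0^{0,\infty}(\mathrm{Ad},0)=2G+1$. You state the target, say it ``must come out after accounting for the coupling'', and then defer to \cite{NS22} ``via the dictionary of \S\ref{sec:Resonance}''. That dictionary, \eqref{eq:zerozetal}, identifies the multiplicity with the order of vanishing of the dynamical determinant $\zeta_{0,\mathrm{Ad}}$. But \cite{NS22} is about the twisted \emph{Selberg} zeta function $Z(s,\mathrm{Ad})$.

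The missing link, which is the core of the paper's proof, is the identity $\zeta_{0,\rho}(s)=\prod_{k\geq 1}Z(s+k,\rho)$. It comes from inserting the explicit hyperbolic Poincar\'e determinant $|\det(\id-\mathcal P_{\gamma^j})|^{-1}=\sum_{k\geq 0}(k+1)e^{-(k+1)j\ell_g(\gamma)}$ into \eqref{eq:zeta_l} and exchanging sums. Combined with $Z(s,\mathrm{Ad})=Z(s-1)Z(s)Z(s+1)$, this gives $\zeta_{0,\mathrm{Ad}}(s)=Z(s)Z(s+1)^2\prod_{k\geq 2}Z(s+k)^3$. Since $Z$ vanishes to order $2G-1$ at $0$ and to order $1$ at $1$, the order at $s=0$ is $2G+1$.

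The case $k=2$ then follows from the isomorphism $u\mapsto u\,d\alpha$; no dualization or Killing form is needed. Your $k=1$ step is correct as written: $m(g,\mathrm{Ad})=6G-6$. The paper quotes \cite[Corollary C]{FS23}, since $\mathrm{Ad}$ factors through $\pi_1(\Sigma)$; equivalently, $\zeta_{g,\mathrm{Ad}}(s)=Z(s-1)/Z(s+2)$.

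\textbf{Your decomposition could have closed the gap, but it is mis-executed.} In the trivialization $\mathcal E_{\mathrm{Ad}}\cong M\times\mathfrak{sl}_2$, the operator $X\otimes\id+\id\otimes\mathrm{ad}(X)$ is diagonal in an $\mathrm{ad}(X)$-eigenbasis. So there is no coupling between components: $\Res_0^{0,\infty}(\mathrm{Ad},0)\cong\Res^{0,\infty}(X,0)\oplus\Res^{0,\infty}(X,-1)$, and the component shifted the other way contributes nothing.

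The scalar resonance at $-1$ has algebraic multiplicity $2G$; it is not a count built from the $(3G-3)$-dimensional space of quadratic differentials. To see this, use the frame $\beta_\pm$ of $\Omega^1_0$ with $\mathcal L_X\beta_\pm=\pm\beta_\pm$, exactly the mechanism the paper uses for $\tau$ in the proof of Theorem~\ref{theo:4}. It gives $\Res^{0,\infty}(X,-1)\cong\Res_0^{1,\infty}(\rho_{\mathrm{triv}},0)$, which has dimension $b_1(M)=2G$ by \cite{zazi}. This yields $1+2G$ with no Selberg zeta function at all, and would be a legitimate alternative to the paper's route.

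\textbf{Two auxiliary claims should be dropped.}
\begin{enumerate}
\item The resonant states here are not smooth. A nonzero $f\in L^2$ with $Xf=-f$ is impossible because the flow preserves the Liouville measure, so the entire $2G$-dimensional component consists of non-smooth (indeed non-$L^2$) distributions.
\item No Jordan-block analysis is needed, since the statement and \eqref{eq:orderofvanish} only involve generalized resonant states. In any case, ``the pairing of a resonant state with itself'' is not meaningful: $h_\alpha$ pairs resonant states of $(X,\rho)$ with coresonant states of $(-X,\rho^*)$.
\end{enumerate}
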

Theorem \ref{theo:3} shows that the spaces of generalized resonant states need not be trivial for $k=0,2$ even when $\rho$ is a nontrivial and irreducible representation. Moreover, the dimensions can be as large as we want when $G\to+\infty.$ 
\subsubsection{Presence of Jordan blocks.}
We prove in Proposition \ref{prop:noJD} that if there is no Jordan block at zero for $k=0,1,2$, then  $m(g,\rho)=\dim(\rho)(2G-2°)$ in Case \ref{case1} and $m(g,\rho)=0$ in Case \ref{case2}.

However, we obtain that in Case \ref{case2}, there exist pairs $(g,\rho)$ for which there is a Jordan block at zero, which shows that the general picture is more complicated than the generic one depicted in Theorem \ref{theo2}.
For a metric $g$ on $\Sigma$, we denote by $\Delta_g$ its (positive) untwisted Laplace--Beltrami operator acting on $L^2(\Sigma,\CC)$.
\begin{thm}
\label{theo:4}
    Let $(\Sigma,g)$ be a connected orientable closed hyperbolic surface such that $\tfrac 14 \in \mathrm{Spec}(\Delta_g)$. Then there exists an irreducible representation $\tau$ of $\pi_1(M)$ which does not factor through $\pi_1(\Sigma)$, for which $m(g,\tau)=0$ and such that $\mathcal L_{X^{\tau}}$ has a non-trivial Jordan block at zero for $k=0,1,2$.

\end{thm}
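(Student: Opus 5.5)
We construct $\tau$ from a hyperbolic surface with $\tfrac14\in\mathrm{Spec}(\Delta_g)$ via the structure of $\pi_1(M)$ recorded in \eqref{eq:pi1pre}. Since $\chi(\Sigma)\neq 0$, $M=S\Sigma$ is a circle bundle with Euler number $2-2G$, and $\pi_1(M)$ is a central extension of $\pi_1(\Sigma)$ by $\langle c\rangle$. Irreducible representations of $\pi_1(M)$ that do not factor through $\pi_1(\Sigma)$ are therefore obtained by sending the central fibre class $c$ to a scalar $\omega\neq 1$, with $\omega^{2G-2}=1$ forced by the relation. The natural candidates are twists of $\rho_{\mathrm{triv}}$ by characters: take $\tau=\chi_\omega$, the one-dimensional representation with $\tau(c)=\omega$ and the surface generators sent to $1$. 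This exists when $\omega^{2G-2}=1$, because the relation in $\pi_1(M)$ reads $\prod[a_i,b_i]=c^{2G-2}$. A one-dimensional representation is automatically irreducible. We choose $\omega=-1$, which corresponds to the spin/half-integer Fourier mode in the fibre.

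The main step is to identify twisted resonant states at $0$ with untwisted objects on $M$. Sections of $\mathcal E_\tau$ are functions on a finite cover $\widetilde M\to M$ along the fibres, and the $\omega$-isotypic component of such functions corresponds to half-integer vertical Fourier modes. For hyperbolic $g$, $\widetilde M$ is a quotient of $\mathrm{PSL}_2$ lifted to a cover, and the half-integer modes correspond to the principal series with the odd (spin) parameter. Using the representation-theoretic description of Pollicott--Ruelle resonant states on hyperbolic surfaces (Dyatlov--Faure--Guillarmou and Guillarmou--Hilgert--Weich, first band), a resonance at $\lambda$ with $\lambda$ attached to a Laplace eigenvalue $\mu=\tfrac14+r^2$ via $\lambda=-\tfrac12\pm ir$ is shifted in the twisted setting. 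We will check that the twist by $\omega=-1$ shifts the relevant spectral parameter by $\tfrac12$ (the horocyclic operators $U_\pm$ change weight by one). The degenerate value $\mu=\tfrac14$, where the two roots $-\tfrac12\pm ir$ collide, then produces a collision at $\lambda=0$ and hence a Jordan block of size $2$. This is the usual mechanism by which $\tfrac14$ gives Jordan blocks at $-\tfrac12$ in the untwisted case. The same argument runs for $k=0$ via functions, for $k=2$ via $d\alpha$-multiples, and for $k=1$ via $\Omega^1_0=\{\alpha^\pm\}$-components, where the stable and unstable coframe twists each shift the parameter appropriately. We would verify that all three degrees land exactly at $0$.

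Next we show $m(g,\tau)=0$. By \eqref{eq:orderofvanish} this is an alternating count. We would argue that the Jordan blocks for $k=0$ and $k=2$ (each of generalized dimension $2$ per eigenfunction) cancel exactly against $k=1$, which carries two copies, of total dimension $4$ per eigenfunction. A simpler route is to note that $\tau$ is acyclic, which follows from Lemma \ref{lemm:coho}, and that $\zeta_{g,\tau}(0)$ is computed by \cite[Theorem A]{BFS23} to be $\pm\det(\id-\tau(c))^{-(2G-2)}=\pm 2^{-(2G-2)}\neq 0$. This immediately gives $m(g,\tau)=0$ without counting.

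The main obstacle is the explicit shift computation in the second paragraph: showing that the $\omega=-1$ twist places the $\mu=\tfrac14$ degeneracy exactly at $\lambda=0$ in each of the degrees $k=0,1,2$, and constructing the generalized resonant state explicitly. We would attempt the construction as $\partial_r$ of the family of spin-$\tfrac12$ horocyclic-invariant distributions $u_r$ at $r=0$, and check that $(\mathcal L_{X^\tau})^2$ kills it while $\mathcal L_{X^\tau}$ does not. If $\omega=-1$ gives the wrong shift, we would instead try higher-rank twists, for example $\tau=\chi_\omega\otimes\rho_0$, or the other roots of unity.
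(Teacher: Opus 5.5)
\textbf{There is a genuine gap: the representation you chose cannot have any resonance at $0$.}

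Your $\tau=\chi_{-1}$ takes values in $\{\pm 1\}$, so it is \emph{unitary}. For unitary twists, $\mathcal L_{X^\tau}$ is skew-adjoint on $L^2$. The Dyatlov--Zworski argument recorded in Proposition~\ref{prop:DZ} (case (2): unitary, irreducible, not factoring through $\pi_1(\Sigma)$) then applies. Combined with acyclicity from Lemma~\ref{lemm:coho}, it gives $\Res_0^{k,\infty}(\chi_{-1},0)=0$ for $k=0,1,2$. So there is no resonance at $0$ at all, let alone a Jordan block.

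Concretely, $\mathcal L_{X^{\chi_{-1}}}$ is just $X$ acting on functions on a fibrewise double cover $\Gamma'\backslash \mathrm{SL}_2(\mathbb R)$ that are odd under the central element $-\id$. The flow commutes with this involution, so the twist produces no shift of the $X$-spectrum whatsoever. The resonances stay in $\{\mathrm{Re}\,\lambda\le 0\}$, and $0$ is only carried by constants, which are even. Your claim that the twist shifts the spectral parameter by $\tfrac12$ ``because $U_\pm$ change weight by one'' conflates fibre Fourier modes ($K$-types) with the spectrum of $X$. The same objection rules out your fallback to other roots of unity, since those characters are unitary too. The paper stresses exactly this point: for unitary twists the resonant spaces at $0$ are trivial, and Theorem~\ref{theo:4} is about a non-unitary twist.

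\textbf{The missing idea.} You correctly identified the mechanism: when $\tfrac14\in\mathrm{Spec}(\Delta_g)$, \cite{GHW} gives a Jordan block of $X$ at $-\tfrac12$. To move it to $0$ you need a \emph{non-unitary} flat bundle with flow-invariant subbundles along which parallel transport grows like $e^{\pm t/2}$.

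The paper takes $\tilde\Gamma=\pi^{-1}(\Gamma)\subset\mathrm{SL}_2(\mathbb R)$ acting on $\mathbb C^2$ by matrix multiplication. This $\tau$ is irreducible by Zariski density, and $\tau(c)=-\id$, so it does not factor through $\pi_1(\Sigma)$. The columns $s_\pm$ of $x\in\mathrm{SL}_2(\mathbb R)$ are equivariant sections satisfying $s_\pm(\varphi_t x)=e^{\pm t/2}s_\pm(x)$. Hence
\[
\mathcal L_{X^\tau}(f_+s_++f_-s_-)=\big((X+\tfrac12)f_+\big)s_++\big((X-\tfrac12)f_-\big)s_-,
\]
so $\Res_0^{0,\bullet}(\tau,0)\cong\Res_0^{0,\bullet}(X,-\tfrac12)$, and \cite{GHW} applies directly. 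The paper handles the other degrees as follows:
\begin{itemize}
\item For $k=1$ it uses the horocyclic coframe $\beta_\pm$ with $\mathcal L_X\beta_\pm=\pm\beta_\pm$. This reduces the problem to $X$ at $-\tfrac12$ and $-\tfrac32$.
\item The $-\tfrac32$ contribution is $U_+$ applied to the first band at $-\tfrac12$, because $\ker(\Delta_g+\tfrac34)=0$.
\item For $k=2$ it uses $u\mapsto u\wedge d\alpha$.
\end{itemize}

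\textbf{On $m(g,\tau)=0$.} Your route via acyclicity and \cite[Theorem A]{BFS23} is a legitimate alternative. It applies equally to the paper's $\tau$ and gives $\zeta_{g,\tau}(0)^{-1}=\pm 4^{2G-2}$. The paper instead reads it off the dimension count $4d-2d-2d=0$, where $d=\dim\ker(\Delta_g-\tfrac14)$. However, this part of your argument is moot without the right $\tau$.
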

The representation $\tau$ is constructed explicitly in \S \ref{sec:JD}. Using the quantum-classical correspondence of Guillarmou, Hilgert and Weich \cite{GHW}, we give in Proposition \ref{prop:JDtau} an explicit description of the Jordan block structure of $(g,\tau)$ at zero in terms of $\mathrm{ker}(\Delta_g-\tfrac 14)$. Finally, we show in Proposition \ref{prop:1/4} that for any $G\geq 2$, there exists a hyperbolic surface of genus $G$ for which $\tfrac 14 \in \mathrm{Spec}(\Delta_g)$. 

Theorem \ref{theo:4} is, to the best of our knowledge, the first example of an Anosov flow and acyclic representation with a non-trivial Jordan block at zero. For unitary representations in our setting, the resonant spaces were shown to be trivial in \cite{DGRS}. Theorem~\ref{theo:4} shows that this is not always the case for non-unitary twists.

\subsection{Generic semisimplicity}
Next, we consider $(\Sigma,g)$, an orientable connected closed manifold of dimension $n+1\geq 2$ with an Anosov metric, and a representation $\rho:\pi_1(M) \to \mathrm{GL}_r(\CC)$ for $M=S\Sigma$. As explained before, the order of vanishing of $\zeta_{g,\rho}$ in \eqref{eq:zeta} at $s=0$ was shown to \emph{not} be a topological invariant in dimension $n+1=3$ for the trivial representation $\rho=\rho_{\mathrm{triv}}$ in \cite{zeta3}. In this paper, we show the following dichotomy on the order of vanishing of $\zeta_{g,\rho}$ at $s=0.$

\begin{thm}\label{thm:sim}
Let $\mathcal{V}$ be a connected open set of Anosov metrics on $\Sigma$, and $\rho:\pi_1(M) \to \mathrm{GL}_r(\CC)$. 
\begin{enumerate}
    \item Either the following holds for an open and dense set of set of metrics $g$ in $\mathcal{V}$
    \begin{equation}\label{eq:cond-simp}
    \Res_0^{k,\infty}(g,\rho,0)=\Res_0^{k,1}(g,\rho,0),\quad
    d^{\nabla_{\rho}}(\Res_0^{k,1}(g,\rho,0))=0,\quad k=0,1,2,\cdots, 2n,
\end{equation}
and
\begin{equation}\label{eq:cond-dim}
\begin{split}
    \dim \Res_0^{k,1}(g,\rho, 0) &= \sum_{j=0}^{\lfloor k/2\rfloor} \dim H^{k-2j}(M,\rho), \\
    \dim \Res_0^{k,1}(g,\rho, 0)&= \dim \Res_0^{2n-k,1}(g,\rho, 0), \quad 0\leq k\leq n.
\end{split}
\end{equation}
 In particular, one has
\begin{equation}
    \label{eq:ovanishgen}
    m(g,\rho)=\sum_{k=0}^n(-1)^{k+n}(n+1-k)\,\mathrm{dim} H^k(M,\rho).
\end{equation}
\item Or \eqref{eq:cond-simp} does not hold for any $g\in \mathcal{V}$.
\end{enumerate}
\end{thm}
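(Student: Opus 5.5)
The plan has two parts: an analyticity argument in the metric $g$, which produces the dichotomy, and a computation of dimensions once semisimplicity holds.

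\emph{Step 1 (openness and analytic dependence).} Fix $g_0\in\mathcal V$. The resonances of $\mathcal L_{X^\rho}$ on $\Omega^k_0\otimes\mathcal E_\rho$ are the poles of a meromorphic resolvent on anisotropic spaces. These can be chosen uniformly for $g$ near $g_0$, using stability of the Anosov property and the Bonthonneau / Guillarmou--Knieper--Lefeuvre type perturbation theory. Consider the Riesz projector $\Pi_0(g)=\frac{1}{2\pi i}\oint_{|\lambda|=\varepsilon}(\mathcal L_{X^\rho_g}+\lambda)^{-1}d\lambda$. It depends continuously on $g$, and real-analytically along real-analytic families $g_t$. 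Its rank is the total multiplicity of resonances in the small disc, which is locally constant, while the resonances themselves may split away from $0$. Condition \eqref{eq:cond-simp} is then equivalent to the following: the group of resonances near $0$ consists only of $0$, with no Jordan block, and every resonant state is closed. In cohomological language, the complex $(\Res_0^{\bullet,\infty},d^{\nabla_\rho})$ has zero differential and $\iota_X$-compatible structure. I will show this condition is open by a semicontinuity argument. The dimension of resonant states at $0$ is bounded below by the cohomology-based count (see Step 2) for \emph{every} $g$, while the rank of the Riesz projector bounds it above. When \eqref{eq:cond-simp} holds at $g_0$, the lower bound equals $\operatorname{rank}\Pi_0(g_0)$, which is locally constant. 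So for nearby $g$ the whole group sits at $0$, has full dimension, and is semisimple.

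\emph{Step 2 (dimension count when \eqref{eq:cond-simp} holds).} Following Dyatlov--Zworski and Cekić--Paternain, I will show that when there are no Jordan blocks and $d^{\nabla_\rho}$ vanishes on resonant states, the natural map $\Res_0^{k,1}\to H^k(M,\rho)$ decomposes as follows. Wedging with the contact form $\alpha$ and with $d\alpha$ gives the injections $\Res^{k-2}_0\xrightarrow{d\alpha\wedge}\Res^k_0$. The resonant states modulo $d\alpha\wedge\Res^{k-2}_0$ map isomorphically to $H^k(M,\rho)$. Surjectivity comes from the fact that the complex of resonant states computes $H^\bullet(M,\rho)$ (a twisted Dang--Rivière theorem). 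This yields $\dim\Res_0^{k,1}=\sum_j\dim H^{k-2j}$. The symmetry $k\leftrightarrow 2n-k$ follows from duality: pairing $u\wedge v\wedge\alpha$ between resonant states of $\mathcal L_{X^\rho}$ and coresonant states of the dual representation, combined with hard-Lefschetz-type injectivity of $(d\alpha)^{n-k}\wedge$. Formula \eqref{eq:ovanishgen} then follows by inserting these dimensions into the higher-dimensional analogue of \eqref{eq:orderofvanish}, namely $m=\sum_k(-1)^{k+n}\dim\Res^{k,\infty}_0$, and resumming.

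\emph{Step 3 (dichotomy via connectedness).} Let $\mathcal G\subset\mathcal V$ be the set where \eqref{eq:cond-simp} holds; it is open by Step 1. To get density when $\mathcal G\neq\emptyset$, take $g_0\in\mathcal G$ and any $g_1\in\mathcal V$. Since $\mathcal V$ is connected and open, $g_1$ can be joined to $g_0$ by a chain of short real-analytic segments $g_t$. Along such a segment the Riesz-projected finite-rank operator $\mathcal L_{X^\rho}\Pi_0(g_t)$, together with $d^{\nabla_\rho}$ restricted to its range, forms an analytic family of matrices in a local analytic frame. The failure of \eqref{eq:cond-simp} is then the vanishing of finitely many analytic functions of $t$: the coefficients of the characteristic polynomial other than the top one, nilpotency defects, and matrix entries of $d^{\nabla_\rho}$. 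If these functions do not all vanish identically, the bad set is discrete. Propagating this along the chain shows that $\mathcal G$ accumulates at $g_1$, which gives density. Conversely, if $\mathcal G=\emptyset$ we are in case (2).

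The main obstacle is the local rank issue in Step 3. The projector $\Pi_0(g_t)$ has constant rank only on a small disc around each $t$, so the chain must be handled by an identity-theorem argument: the set of $t$ where \eqref{eq:cond-simp} holds on a dense subset is both open and closed in the segment. This needs uniform anisotropic spaces along compact families of metrics, and that is where I expect most of the technical work.
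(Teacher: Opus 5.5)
Your overall architecture matches the paper's: openness from upper semicontinuity of $\dim\Res_0^{k,\infty}$ plus a matching lower bound, and density by analytic continuation along piecewise linear paths. Your Step 2 is also a fine alternative to Lemma \ref{lem:ind} for deriving \eqref{eq:cond-dim} once \eqref{eq:cond-simp} is known in all degrees.

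The gap is in Step 1. You justify the lower bound ``for every $g$'' by citing Step 2, but Step 2 assumes \eqref{eq:cond-simp} at that $g$, so the argument is circular. Moreover, your conclusion there (``sits at $0$, has full dimension, and is semisimple'') never addresses the closedness $d^{\nabla_\rho}\Res_0^{k,1}(g,\rho,0)=0$, which is part of \eqref{eq:cond-simp}. No dimension count can supply it. Consider the bound $\dim\Res_0^{k,\infty}(g,\rho,0)\ge\sum_j h^{k-2j}$ for $k\le n$, with $h^i=\dim H^i(M,\rho)$; it does hold for all $g$, via the complex $\Res^{\bullet,\infty}(g,\rho,0)$ and injectivity of $d\alpha\wedge$ below the middle degree. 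Even so, it cannot see a resonant $u$ with $d^{\nabla_\rho}u=d\alpha\wedge w$, where $w\neq0$ is a closed resonant $(k-1)$-form: such a $u$ only replaces the cocycle $u$ by $u-\alpha\wedge w$.

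The missing idea is the paper's induction on the degree, with the co-resonant conditions \eqref{eq:cond-cosimple} carried along (Lemma \ref{lem:ind}). Suppose \eqref{eq:cond-simp} and \eqref{eq:cond-cosimple} hold in degrees $<k\le n$. Then the map $\pi_k$ from \emph{closed} resonant $k$-forms to $H^k(M,\rho)$ has kernel $d\alpha\wedge\Res_0^{k-2,1}$ by \cite[Lemma 2.6]{zeta3}. It is onto by \cite[Lemma 2.7]{zeta3}, because the co-resonant $(2n+1-k)$-forms equal $\Res_0^{*,k-1,1}\wedge(d\alpha)^{n+1-k}$ and are therefore exact. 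For $g$ near $g_0$ this gives the chain
\[
\sum_j h^{k-2j}=\dim\big(\Res_0^{k,1}(g)\cap\ker d_k\big)\le\dim\Res_0^{k,1}(g)\le\dim\Res_0^{k,\infty}(g)\le\dim\Res_0^{k,1}(g_0)=\sum_j h^{k-2j},
\]
which forces all of \eqref{eq:cond-simp} in degree $k$ at once. One then does the same for $\rho^*$ and passes to $k+1$; degrees $k>n$ follow from the isomorphism $(d\alpha)^{k-n}\wedge$.

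Step 3 has the same defect. The quantities you list do not cut out the bad set. For an analytic matrix family, the presence of a Jordan block or of a non-closed eigenvector is not given by the vanishing of analytic functions: the family $\bigl(\begin{smallmatrix}0&t\\0&0\end{smallmatrix}\bigr)$ has a Jordan block at $0$ exactly for $t\neq0$. Moreover, the vanishing of the lower characteristic-polynomial coefficients, of $\mathcal L_{X^\rho}\Pi$ and of $d^{\nabla_\rho}\Pi$ describes the \emph{good} configuration, and only near points where the local rank already equals the expected count.

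What the paper actually uses is that, by the same induction, \eqref{eq:cond-simp} together with \eqref{eq:cond-cosimple} is equivalent to the upper bounds $\dim\Res_0^{k,\infty}\le\sum_j h^{k-2j}$ for $k\le n$, plus the analogous bounds for $\rho^*$. Violating one of these bounds means excess algebraic multiplicity of $0$ for the finite-rank matrix $\mathcal L_{X^\rho}\Pi(g_t)$. That is the vanishing of certain elementary symmetric functions of its eigenvalues (Lemma \ref{lem:5.2}), which is analytic in $t$. With that equivalence in hand, your open--closed argument along each segment goes through.
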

By a similar argument as in \cite{zeta3}, \eqref{eq:cond-simp} implies \eqref{eq:cond-dim} (see also Lemma~\ref{lem:ind}). In \cite[Conjecture 1]{zeta3}, the authors conjectured that the second case never happen for the trivial representation, that is, \eqref{eq:cond-simp} holds for an open and dense set of Anosov metrics on $\Sigma.$ In \cite[Theorem 1]{zeta3}, they showed that the conjecture is true for a generic conformal perturbation of a hyperbolic $3$-metric. As a direct consequence of Theorem \ref{thm:sim}, we deduce the following result.

\begin{cor}\label{cor:hyp}
Suppose $(\Sigma,g_0)$ is a compact connected oriented hyperbolic $3$-manifold and $\rho=\rho_{\mathrm{triv}}$ be the trivial representation. Let $\mathcal{V}$ be the connected component of Anosov metrics containing $g_0$. Then for an open and dense set of Anosov metrics $g$ in $\mathcal{V}$, we have \eqref{eq:cond-simp}, \eqref{eq:cond-dim} and \eqref{eq:ovanishgen}.
\end{cor}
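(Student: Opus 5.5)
Corollary~\ref{cor:hyp} follows from the dichotomy of Theorem~\ref{thm:sim} applied with $\rho=\rho_{\mathrm{triv}}$ and $\mathcal V$ the connected component containing $g_0$. It suffices to rule out alternative (2), and for that it is enough to exhibit a single Anosov metric $g\in\mathcal V$ for which \eqref{eq:cond-simp} holds. Once (2) is excluded, (1) holds: \eqref{eq:cond-simp} is true on an open dense subset of $\mathcal V$, and \eqref{eq:cond-dim} and \eqref{eq:ovanishgen} follow as stated in Theorem~\ref{thm:sim} (via Lemma~\ref{lem:ind}).

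To produce such a metric, I would invoke \cite[Theorem 1]{zeta3}. It states that for a generic conformal perturbation $g=e^{2f}g_0$ of the hyperbolic $3$-metric $g_0$, with $f$ small in a suitable $C^N$ norm, the generalized resonant states at $0$ for $\rho_{\mathrm{triv}}$ have no Jordan blocks, i.e.\ $\Res_0^{k,\infty}=\Res_0^{k,1}$ for $k=0,\dots,4$, and all resonant states are closed. Since Anosovity is an open condition, small conformal perturbations remain Anosov. The path $t\mapsto e^{2tf}g_0$ stays within the Anosov metrics, so such a $g$ lies in the same connected component $\mathcal V$. Hence \eqref{eq:cond-simp} holds at some $g\in\mathcal V$, and alternative (2) fails.

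The main obstacle is matching conventions. One must check that the statement of \cite[Theorem 1]{zeta3}, which is formulated for the semisimplicity of $\mathcal L_X$ on $\Omega_0^k$ together with closedness $d(\Res_0^{k,1})=0$, gives exactly \eqref{eq:cond-simp} for all $k$ from $0$ to $2n=4$, and not merely the conclusion about the order of vanishing. I would go through \cite{zeta3}, where the perturbation argument establishes both semisimplicity and $d$-closedness of the resonant states at the perturbed metric. If only some degrees are covered, I would add the Hodge-star/duality argument between degrees $k$ and $2n-k$ (which pairs resonant states with coresonant states) to fill in the remaining ones.

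Finally, I would note that the openness part of Theorem~\ref{thm:sim}(1) is automatic. Density in the whole component $\mathcal V$, rather than only near $g_0$, is precisely what the dichotomy supplies through connectedness. No further local analysis is needed.
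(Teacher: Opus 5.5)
Your proposal is correct and is the paper's argument. The paper states the corollary as a direct consequence of Theorem~\ref{thm:sim}, with \cite[Theorem 1]{zeta3} supplying a small generic conformal perturbation of $g_0$ inside $\mathcal V$ at which \eqref{eq:cond-simp} holds; this excludes alternative (2). The only point you leave implicit is that $\mathcal V$ is open, being a connected component of the open set of Anosov metrics, which is what lets you apply Theorem~\ref{thm:sim}.
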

Corollary \ref{cor:hyp} extends \cite[Theorem 1]{zeta3} to the whole connected component of $g_0$. We do not require $g$ to be a perturbation of $g_0$, and neither do we require $g$ to be conformal to $g_0.$ To the best of our knowledge, it is not known whether the space of Anosov (or negatively curved) metrics on a hyperbolic 3-manifold is connected or not, see Conjecture \ref{conj2}.

Finally, we give another example of representations that do not satisfy \eqref{eq:cond-dim} (which in particular applies to the case of $\Sigma$ a connected orientable negatively curved surface).
\begin{prop}\label{prop:per-triv}
    Let $r\geq 1$ and $\Sigma$ be a closed orientable manifold with an Anosov metric $g$. Suppose $M=S\Sigma$ satisfies $ \dim H_1(M,\CC) \geq 2$. Then there exists a representation $\pi_1(M)\to \mathrm{GL}_r(\CC)$ such that
    \begin{equation}
        \dim \Res_0^{0,\infty}(g,\rho,0) \neq \dim H^0(M,\rho).
    \end{equation}
\end{prop}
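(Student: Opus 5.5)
Our plan is to take $\rho$ to be a non-unitary abelian representation built from a cohomology class. Since $\dim H_1(M,\CC)\geq 2$, there is a nonzero class $[\theta]\in H^1(M,\CC)$, represented by a closed real $1$-form $\theta$, whose pairing with the Liouville measure (the asymptotic cycle of the flow) vanishes: $\int_M \theta(X)\,d\mu_L=0$. We choose $\theta$ as a nonzero element of the kernel of the linear functional $[\theta]\mapsto \int_M\theta(X)\,d\mu_L$ on the at least two-dimensional real space $H^1(M,\RR)$. Let $\rho_1:\pi_1(M)\to\CC^*$, $\rho_1(\gamma)=\exp(\int_\gamma\theta)$, which is a real positive character, nontrivial since $[\theta]\neq 0$. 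We then set $\rho=\rho_1\oplus \rho_{\mathrm{triv}}^{\oplus (r-1)}$ when $r\geq 2$, and $\rho=\rho_1$ when $r=1$. For the trivial summands, $\dim H^0$ and $\dim \Res_0^{0,\infty}$ are both $r-1$, since functions in the resonant space at $0$ for $X$ are constants by mixing. It therefore suffices to show that $\Res_0^{0,\infty}(g,\rho_1,0)\neq 0$, while $H^0(M,\rho_1)=0$ because $\rho_1$ is a nontrivial character.

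Trivializing $\mathcal E_{\rho_1}$ via the multivalued function $e^{\int\theta}$ conjugates $\mathcal L_{X^{\rho_1}}$ on sections to the scalar operator $X+\theta(X)$ on $C^\infty(M)$, or to $X-\theta(X)$ depending on convention. So we need $0$ to be a Pollicott--Ruelle resonance of $P_\varepsilon:=X+V$, with $V=\theta(X)$ real-valued. Here we use a family: consider $P_t=X+tV$ for $t\in\RR$. The leading resonance of $X+tV$ (acting on functions) is real, simple, and equals the topological pressure $\lambda(t)=\mathrm{Pr}(tV - J^u)$ appropriately normalised, i.e.\ the largest resonance corresponds to $P(-J^u+tV)$. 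By Ruelle--Perron--Frobenius theory this pressure is convex and analytic in $t$, with $\lambda(0)=0$ and $\lambda'(0)=\int V\,d\mu_L=0$. This is why we chose $\theta$ with vanishing Liouville average. Moreover, $\lambda''(0)>0$ unless $V$ is cohomologous to a constant. This cannot happen: then $\theta(X)=Xf+c$, and $c=0$ by the zero average, so $\theta-df$ vanishes on $X$ and its periods over all closed orbits vanish. Since closed orbits generate $H_1(M,\RR)$, as periodic orbits of Anosov flows do, $[\theta]=0$, which is a contradiction. Hence $\lambda(t)>0$ for $t\neq0$.

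That alone gives a positive resonance rather than a zero one, so we adjust: instead of requiring zero Liouville average, we pick $[\theta]$ so that the pressure function $t\mapsto\lambda(t)$ crosses zero at $t=1$. Concretely, take any nonzero class $\theta_0$ in the kernel of the asymptotic-cycle functional, and a second class $\theta_1$ with $\int\theta_1(X)\,d\mu_L\neq 0$ of suitable sign. The two-dimensional family $a\theta_0+b\theta_1$ is where $\dim H_1\geq 2$ is used. The function $F(a,b)$, the leading real resonance of $X+(a\theta_0+b\theta_1)(X)$, is continuous and convex, and vanishes at the origin. Along $b=0$, $F>0$ for $a\neq 0$. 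Along $a=0$, $F$ takes negative values for small $b$ of sign opposite to its derivative. By connectedness, the zero set of $F$ contains a point $(a,b)$ with $[a\theta_0+b\theta_1]\neq 0$. For that $\theta$ the leading resonance is exactly $0$, so $\Res_0^{0,1}(g,\rho_1,0)\neq 0$ while $H^0(M,\rho_1)=0$.

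The main obstacle is justifying that the leading resonance of $X+V$ on anisotropic spaces is real and equals the pressure $P(V-J^u)$, shifted by the known normalisation $P(-J^u)=0$. We would cite the standard identification via the dynamical determinant and Ruelle transfer operators, e.g.\ through the zeta function $\prod_\gamma(1-e^{\int_\gamma V - s\ell(\gamma)})$, whose first pole is at the pressure. Continuity of $F$ in $(a,b)$ follows from analytic perturbation of the simple isolated leading resonance.
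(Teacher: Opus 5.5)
There is a genuine gap. Your final step needs a class $\theta_1$ with $\int_M\theta_1(X)\,d\mu_L\neq 0$, and for a geodesic flow no such class exists.

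The Liouville asymptotic cycle vanishes identically. Every class in $H^1(M,\RR)$ is of the form $\pi^*[\eta]$ with $\eta$ a closed $1$-form on $\Sigma$. For surfaces this is the Gysin sequence of Lemma~\ref{lemm:coho}, using $e=\chi(\Sigma)\neq 0$; in higher dimension it holds because the fibres are simply connected. Moreover $(\pi^*\eta)(X)(x,v)=\eta_x(v)$ is odd in $v$, and $\int_M (df)(X)\,d\mu_L=0$. Hence $\int_M\theta(X)\,d\mu_L=0$ for every closed $\theta$. Equivalently, the flip $v\mapsto -v$ reverses $X$, preserves $\mu_L$ and acts trivially on $H^1(M)$.

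Consequently $\beta([\theta]):=P(\theta(X)-J^u)$ on $H^1(M,\RR)$ is convex with $\beta(0)=0$ and $d\beta(0)=0$. Your own argument shows $\theta(X)$ is never cohomologous to a constant when $[\theta]\neq 0$, so $\beta$ is in fact strictly convex, and therefore $\beta>0$ on every nonzero real class. By analytic perturbation of the simple resonance at $0$, for small real $[\theta]\neq 0$ the only resonance near $0$ is $\beta([\theta])>0$, so $0$ is not a resonance. The intermediate-value argument in the $(a,b)$-plane therefore has nothing to work with, and real positive characters cannot produce the example. Unitary characters cannot either: for a nontrivial unitary character, resonant states at $0$ are smooth and flat, as in the proof of Proposition~\ref{prop:DZ}, hence vanish.

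The missing idea is to let the parameter be complex. Take $\rho_z(\gamma)=\exp(z_1f_1(\gamma)+z_2f_2(\gamma))$ with $z\in\CC^2$; this is where $\dim H_1(M,\CC)\geq 2$ enters. The resonance near $0$ is a holomorphic function of $z$ vanishing at $z=0$. The paper instead uses the determinant of $\widetilde{\Pi}^0_{\rho_z}(0)\widetilde{\mathcal L}_{X^{\rho_z}}\widetilde{\Pi}^0_{\rho_z}(0)$, which also covers $\rho_0=\id_r$ with a rank-$r$ resonance. The zero set of a holomorphic function of two complex variables has no isolated points. This gives $z\neq 0$ arbitrarily close to $0$ with $0\in\Res^0(\rho_z)$, while $H^0(M,\rho_z)=0$.

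In your language: to leading order the resonance near $0$ is $\tfrac12 Q(z)$, where $Q$ is the complex-bilinear extension of the positive definite variance form. Its zeros lie near the complex cone $Q(z)=0$, which meets $\RR^2$ (where $Q>0$) and $i\RR^2$ (where $Q<0$) only at the origin. So genuinely non-real, non-unitary characters are needed, and once you argue this way no thermodynamic formalism (pressure, equilibrium states) is required.
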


\subsection{Further questions}
Let $\Sigma$ be a genus $G\geq 2$ closed surface. We are not aware of an example of a pair $(g,\rho)$ for which $m(g,\rho)$ is not given by $\dim(\rho)(2G-2)$ in Case \ref{case1} and $0$ in Case \ref{case2}. We make the following conjecture.
\begin{conj}
\label{conj}Let $\Sigma$ be a genus $G\geq 2$ closed surface.
\begin{enumerate}
    \item  There exists a negatively curved metric $g$ on $\Sigma$ and a finite dimensional representation $\rho \in\mathrm{Hom}(\pi_1(\Sigma), \mathrm{GL}_r(\CC)) $ for which $m(g,\rho)\neq (2G-2)r$.
    \item 
    If $\rho$ is acyclic, i.e., $H^*(M,\rho)=0$, then we always have $m(g,\rho)=0$.
\end{enumerate}
\end{conj}
It is interesting to ask what happens for a metric perturbation of $(g,\tau)$ in Theorem~\ref{theo:4}.

Motivated by Corollary~\ref{cor:hyp}, we make the following conjecture.
\begin{conj}
\label{conj2}
    Let $(\Sigma,g_0)$ be a negatively curved oriented closed $3$-manifold. Then the space of Anosov metrics on $\Sigma$ is connected.
\end{conj}
We note that the connectedness of the space of negatively curved metrics on hyperbolic $3$-manifolds would follow from the conjecture that the \emph{cross curvature flow}\footnote{In \cite{CH04}, the authors show that the cross curvature flow preserves negative curvature when defined.} defined from any negatively curved metric exists for all time and converges to a hyperbolic metric, see \cite{CH04} for partial results on short time existence.

\subsection{Structure of the paper}
In \S \ref{sec:repr}, we recall the construction of the flat bundle associated to a representation $\rho$. Next, we compute in Lemma \ref{lemm:coho} the dimensions of the first twisted cohomology groups of $(M,\rho)$ in Cases \ref{case1} and \ref{case2}. We define in Proposition \ref{prop:connected2} a  Zariski open subset $\mathcal{V}\subset \Hom_{\mathrm{irr}}(\pi_1(M), \mathrm{GL}_r(\CC))$ which decomposes as a union of a finite number of path-connected components which all contain an irreducible representation. In \S \ref{sec:Resonance}, we recall the definition of twisted Pollicott--Ruelle resonances and prove \eqref{eq:orderofvanish}.

In \S \ref{sec:pert}, we show that flat bundles $\mathcal E_\rho$ for different $\rho$ can be identified. Moreover, the identification can be chosen to depend analytically on $\rho$, see Lemma \ref{lem:ident}. This allows us to use perturbation theory for Pollicott--Ruelle resonances and show that  the spectral projectors at zero depend analytically in $\rho$, see Proposition \ref{prop:pert}.

In \S \ref{sec:ofv}, we adapt the argument of \cite{zazi} to compute $m(g,\rho)$ for representations with no Jordan block at zero, see Proposition \ref{prop:noJD}. The end of the section is dedicated to the showing that unitary representations have no Jordan block at zero, see Proposition \ref{prop:DZ}.

We prove Theorem \ref{theo2} in \S \ref{sec:proofMaintheo}. The strategy is the following:
\begin{itemize}
    \item For each $\rho \in \mathcal V$, there is a path $(\rho(t))_{t\in[0,1]}\subset \mathcal V$ such that $\rho(1)=\rho$ and $\rho(0)$ is unitary.
    \item We use the perturbation theory developed in \S \ref{sec:pert} to show that for  each $s\in [0,1]$, $\rho(s)$ has a neighborhood $\mathcal U_{\rho(s)}$ in which the conclusion of Theorem \ref{theo2} does \emph{not} hold for a Zariski closed subset.
    \item Using the fact that $\rho(0)$ is unitary and Proposition \ref{prop:DZ}, we show that this Zariski closed subset is proper and thus of complex codimension $\geq 1$.
\end{itemize}
We show Corollary \ref{cor} at the end of the section, using \cite{CD24} and \cite{BFS23}.

Theorems \ref{theo:3} and \ref{theo:4} are proved in \S \ref{sec:last}. Theorem \ref{theo:3} follows from results of \cite{NS22} on the twisted Selberg zeta function. To obtain Theorem \ref{theo:4}, we use the quantum--classical correspondence obtained in \cite{GHW}.

Finally, Theorem \ref{thm:sim} and Proposition \ref{prop:per-triv} are shown in \S \ref{sec:GenSim} following a similar strategy as in the proof of Theorem \ref{theo2}.

\subsection*{Acknowledgements} We would like to thank Mihajlo Cekić, Yann Chaubet, Semyon Dyatlov, Yulin Gong, Colin Guillarmou, Thibault Lefeuvre, Laura Monk, Gabriel Rivière, Yuhao Xue, and Maciej Zworski for helpful discussions related to the project.

T.H. was supported by the European
Research Council (ERC) under the European Union’s Horizon 2020 research and innovation
programme (Grant agreement no. 101162990 — ADG).
\section{Preliminaries}
\subsection{Representations of the fundamental group.} 
\subsubsection{Flat bundles}
\label{sec:repr} Let $(\Sigma,g)$ be a closed Riemannian manifold of dimension $n+1\geq 2$. Let $M=\{(x,v)\in T\Sigma \mid \|v\|_g=1\}$ be its unit tangent bundle. Let $\pi_1(M)$ be the fundamental group of $M$. Recall that $\pi_1(M)$ acts (on the left) on the universal cover $\widetilde{M}$ by deck transformations. Note that the set of deck transformations induced by $\pi_1(M)$ is isomorphic to the opposite group $\pi_1(M)^{\mathrm{op}}$ of $\pi_1(M)$ and we will identify these two groups by a small abuse of notation. Let $\rho \in \mathrm{Hom}(\pi_1(M), \mathrm{GL}_r(\CC))$ be a finite dimensional representation of $\pi_1(M).$
The \emph{flat vector bundle} $\mathcal E_\rho$ is defined as
\begin{equation}
    \label{eq:flateq}
    \mathcal E_\rho:=\widetilde{M}\times \mathbb C^r /\sim ,\quad (x,v)\sim (\gamma(x),\rho(\gamma)v), \ \forall \gamma \in \pi_1(M).
\end{equation}
 The bundle $\mathcal E_\rho$ is equipped with a \emph{flat connection} $\nabla_\rho.$ We recall here its definition. Consider the trivial connection $\nabla^{\rm{triv}}$ on $\widetilde{M}\times \mathbb C^r$:
$$\forall s\in C^{\infty}(\widetilde{M};\widetilde{M}\times \mathbb C^r)=C^{\infty}(\widetilde{M},\mathbb C^r),\quad \nabla^{\rm{triv}}s:=ds\in C^{\infty}(\widetilde{M};T^*\widetilde{M}\otimes \CC^r). $$
We note that the trivial connection descends to $\mathcal E_\rho$. Indeed, recall that a section $s$ of $\mathcal E_\rho$ identifies to a function $C^{\infty}(\widetilde{M},\mathbb C^r)$ which is $\pi_1(M)$-equivariant, i.e, $s(\gamma \cdot x)=\rho(\gamma)s(x)$ for any $x\in \widetilde{M}$ and $\gamma \in \pi_1(M)$. In this identification, we check that
$$\forall x\in\widetilde{M},\ \forall \gamma \in \pi_1(M),\quad d(\rho(\gamma)s(\gamma^{-1}\cdot x))=\rho(\gamma)ds(\gamma^{-1}\cdot x). $$
The induced connection is denoted $\nabla_\rho$ and is clearly flat. We conversely check that the holonomy of $\mathcal E_\rho$ is given by $\rho$.\footnote{From \eqref{eq:flateq}, one sees that the holonomy along a closed geodesic $\gamma$ is given by $v\mapsto \rho^{-1}([\gamma]^{\mathrm{op}})$, where $[\gamma]^{\mathrm{op}}\in \pi_1(M)^{\rm{op}}$ is the class defined by $\gamma$ in the group of deck transformations. Since $[\gamma]^{\mathrm{op}}\cong [\gamma]^{-1}$, where $[\gamma]$ is the class of $\gamma$ in $\pi_1(M)$, we get that the holonomy, seen as a representation of $\pi_1(M)$, is given by $\rho.$} We can define a \emph{flat differential} $d^{\nabla_\rho}$ from $\nabla_{\rho}$. For $k=0,1,\ldots,2n+1$, let $\Omega^k=C^{\infty}(M;\Lambda^kT^*M)$ denote the space of smooth $k$-forms on $M$. The flat differential acts on smooth $k$-forms with values in $\mathcal E_\rho$.
$$d^{\nabla_\rho}:C^{\infty}(M; \Lambda^kT^*M\otimes \mathcal E_{\rho})\to C^{\infty}(M; \Lambda^{k+1}T^*M\otimes \mathcal E_{\rho}). $$
It is uniquely determined by the requirements that it coincides with $\nabla_\rho$ on $0$-forms and that it satisfies the \emph{Leibniz rule:}
\begin{equation}
\label{eq:Leib}
    \forall \omega,\eta \in C^{\infty}(M, \Lambda^\bullet T^*M\otimes \mathcal E_{\rho}),\quad d^{\nabla_\rho}(\omega \wedge \eta)=d^{\nabla_\rho}\omega \wedge \eta+(-1)^{\mathrm{deg}(\omega)}\omega \wedge d^{\nabla_\rho}\eta. 
\end{equation} 
\subsubsection{Twisted cohomology groups}
\label{sec:pi1}
Since the connection is flat, one has $d^{\nabla_\rho}\circ d^{\nabla_\rho}=0$. In particular, we can consider the \emph{twisted de Rham cohomogy} of $M$. For $k=0,1,\ldots,2n+1$, denote by $H^k(M,\rho):=H^k(M,\mathcal E_\rho)$ the twisted cohomology groups of $M$ by $\rho$.

Suppose now that $\Sigma$ is an orientable surface of genus $G\geq 2.$
The natural projection $\pi:M\to \Sigma$ induces a map $\pi_*:\pi_1(M)\to \pi_1(\Sigma).$ Since $M\to \Sigma$ is a $\mathbb S^1$-bundle, we have the following short exact sequence:
\[
1 \longrightarrow \pi_1(\mathbb S^1) \cong \mathbb{Z}
\longrightarrow \pi_1(M)
\xrightarrow{\;\pi_*\;}
\pi_1(\Sigma)
\longrightarrow 1,
\]
where the map $\pi_*$ is surjective. Recall that the groups $\pi_1(\Sigma)$ and $\pi_1(M)$ have the following presentations:
\begin{equation}\label{eq:pi1pre}
\begin{split}
    &\pi_1(\Sigma)=\langle a_1,b_1,\cdots, a_G, b_G\, | \, [a_1,b_1]\cdots [a_G,b_G]=1\rangle,\\
    &\pi_1(M)=\langle a_1,b_1,\cdots, a_G, b_G, c\, |\, [a_i,c]=[b_i,c]=1,\, [a_1,b_1]\cdots [a_G,b_G]=c^{2G-2}\rangle,
\end{split}
\end{equation}
where $c$ is a generator of $\pi_1(\mathbb S^1)\cong \mathbb{Z}$. 

A representation $\rho:\pi_1(M)\to \mathrm{GL}_r(\CC)$ is said to \emph{factor through} $\pi_1(\Sigma)$ if there exists a representation $\bar{\rho}:\pi_1(\Sigma)\to \mathrm{GL}_r(\CC) $ such that $\rho=\bar{\rho}\circ\pi_*.$ Note that $\rho$ factors over $\pi_1(\Sigma)$ if and only if $\rho(c)=\mathrm{Id}.$

For a representation $\rho:\pi_1(M)\to \mathrm{GL}_r(\CC) $, let $\rho^{\pi_1(M)}:=\{v\in \CC^r \mid \forall \gamma \in \pi_1(M),\ \rho(\gamma)v=v\}$ be the invariant subspace. With these notations, we compute in the next lemma the dimensions of the first twisted cohomology groups of $(M,\rho).$
\begin{lem}
\label{lemm:coho} 
    Let $\Sigma$ be a closed surface of genus $G\geq 2$ and let $\rho \in \mathrm{Hom}(\pi_1(M), \mathrm{GL}_r(\CC))$ be a finite dimensional representation of $\pi_1(M)$. 
    \begin{enumerate}
        \item If $\rho$ factors through $\pi_1(\Sigma)$, and $\dim(\rho^{\pi_1(M)})=r_1$. Then
        \begin{equation*}
            \dim(H^0(M,\rho))=r_1,\quad  \dim (H^1(M,\rho))=-r \chi(\Sigma)+2r_1=r(2G-2)+2r_1.
        \end{equation*}
        \item If $\rho$ is irreducible and does not factor through $\pi_1(\Sigma)$, then $\rho$ is acyclic, i.e.,
        \begin{equation*}
            H^i(M,\rho)=0,\quad i=0,1,2,3.
        \end{equation*}
    \end{enumerate}
\end{lem}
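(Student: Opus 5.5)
We identify $H^k(M,\rho)$ with the group cohomology $H^k(\pi_1(M),\rho)$, which is legitimate because $M$ is aspherical: it is a circle bundle over the aspherical surface $\Sigma$. We then use the Euler characteristic and Poincaré duality to pin down every degree. Since $M$ is a closed orientable $3$-manifold, $\chi(M)=0$, and hence
\begin{equation*}
\sum_{i=0}^3(-1)^i\dim H^i(M,\rho)=r\,\chi(M)=0.
\end{equation*}
Poincaré duality with local coefficients gives $H^i(M,\rho)\cong H^{3-i}(M,\rho^*)^*$, where $\rho^*$ is the dual (contragredient) representation. In both cases we first compute $H^0$, which equals the invariants $\rho^{\pi_1(M)}$, and $H^3$, which equals $(\rho^*)^{\pi_1(M)}$ dualized. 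The Euler characteristic identity then determines $\dim H^1-\dim H^2$, so it remains to find one more of $H^1$ or $H^2$.

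\textbf{Case (1).} Here $\rho=\bar\rho\circ\pi_*$ factors through $\pi_1(\Sigma)$. First, $\dim H^0=r_1$, since $\pi_*$ is surjective. Next we compute $H^3$. Because $\bar\rho$ is trivial on $c$, the dual $\rho^*$ also factors through $\pi_1(\Sigma)$, so $\dim H^3=\dim(\rho^*)^{\pi_1(M)}$. This equals the dimension of the coinvariants of $\rho$, which is not obviously $r_1$ for non-semisimple $\rho$. To handle this we use the Gysin (Lyndon--Hochschild--Serre) sequence for the central extension $\ZZ\to\pi_1(M)\to\pi_1(\Sigma)$. Since $c$ acts trivially, $H^q(\ZZ,\rho)=\CC^r$ for $q=0,1$, and the spectral sequence has two rows, each equal to $H^p(\Sigma,\bar\rho)$. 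Its only differential is $d_2\colon H^p(\Sigma,\bar\rho)\to H^{p+2}(\Sigma,\bar\rho)$, given by cup product with the Euler class $e$. This map is nonzero only for $p=0$, where it sends $H^0(\Sigma,\bar\rho)$ to $H^2(\Sigma,\bar\rho)$. Reading off the spectral sequence,
\begin{equation*}
H^1(M,\rho)\cong \ker\bigl(d_2|_{H^1(\Sigma,\bar\rho)}\bigr)\oplus \ker\bigl(d_2|_{H^0(\Sigma,\bar\rho)}\bigr)=H^1(\Sigma,\bar\rho)\oplus\ker\bigl(d_2|_{H^0(\Sigma,\bar\rho)}\bigr).
\end{equation*}

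We next claim that $d_2=e\cup\cdot$ is injective on $H^0$. Take $v\in H^0(\Sigma,\bar\rho)=\rho^{\pi_1}$; its cup product with $e$ is $e\otimes v$, which lies in $H^2(\Sigma,\CC)\otimes\langle v\rangle\subset H^2(\Sigma,\bar\rho)$. Because $v$ spans a trivial subrepresentation, this element is nonzero when $e\neq0$, i.e. when $2G-2\neq 0$. If this step turns out to be delicate, it can be replaced by a direct computation via Fox calculus from the presentation \eqref{eq:pi1pre}. Granting injectivity, $H^1(M,\rho)\cong H^1(\Sigma,\bar\rho)$.

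It remains to compute $\dim H^1(\Sigma,\bar\rho)$. On $\Sigma$ we have $\dim H^0(\Sigma,\bar\rho)=r_1$ and, by duality, $\dim H^2(\Sigma,\bar\rho)=\dim(\bar\rho^*)^{\pi_1}$. Then $\chi(\Sigma)r=\dim H^0-\dim H^1+\dim H^2$ gives
\begin{equation*}
\dim H^1(\Sigma,\bar\rho)=-r\chi(\Sigma)+r_1+\dim(\bar\rho^*)^{\pi_1}.
\end{equation*}
This matches the stated value $-r\chi(\Sigma)+2r_1$ only if $\dim(\bar\rho^*)^{\pi_1}=r_1$. I expect this to be the main obstacle: for non-semisimple $\rho$ the invariants and coinvariants can differ. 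The paper's statement presumably intends either this equality or the reductive setting it works in (irreducible or completely reducible $\rho$, where invariants equal coinvariants). In the proof I will note this explicitly and check it in those cases.

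\textbf{Case (2).} Here $\rho$ is irreducible and $\rho(c)\neq\id$. Since $c$ is central, Schur's lemma gives $\rho(c)=\lambda\,\id$ with $\lambda\neq1$. The LHS spectral sequence now has $E_2$ entries $H^p(\Sigma,H^q(\ZZ,\rho))$. The group $H^q(\ZZ,\rho)$ is the kernel ($q=0$) or cokernel ($q=1$) of $\rho(c)-\id=(\lambda-1)\id$, which is invertible, so both vanish. Hence the entire $E_2$ page is zero and $H^i(M,\rho)=0$ for all $i$, proving that $\rho$ is acyclic.
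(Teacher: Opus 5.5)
Your case (2) is correct, and it packages the argument differently from the paper. You kill the whole $E_2$ page of the Lyndon--Hochschild--Serre sequence because $H^q(\ZZ,\rho)$ is the kernel/cokernel of the invertible map $(\lambda-1)\id$. The paper instead builds the explicit fibrewise homotopy $u=\iota_V(\mathcal L_V^{\nabla})^{-1}f$. Both rest on the vanishing of the twisted cohomology of the circle fibre.

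In case (1) you follow the paper's Gysin route, but your argument for the injectivity of $e\cup\cdot$ on $H^0(\Sigma,\bar\rho)$ is wrong. There is no inclusion $H^2(\Sigma,\CC)\otimes\langle v\rangle\subset H^2(\Sigma,\bar\rho)$. Since $H^2(\Sigma,\cdot)$ is the top degree, it is right exact but not left exact. Under $H^2(\Sigma,\bar\rho)\cong H_0(\Sigma,\bar\rho)=V_{\pi_1(\Sigma)}$ (the coinvariants), one has $e\cup v=\pm\chi(\Sigma)[v]$, where $[v]$ is the class of $v$ modulo $\mathrm{span}\{(\bar\rho(\gamma)-\id)w\}$. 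So injectivity holds if and only if no nonzero invariant vector lies in that span. This is the same invariants-versus-coinvariants issue you flagged afterwards, not an independent harmless step.

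Your suspicion is in fact justified: statement (1) is false without a semisimplicity hypothesis, so no argument can prove it as written. The paper's proof has exactly these two gaps: it asserts $H^0(\Sigma,E_{\bar\rho})\cong H^2(\Sigma,E_{\bar\rho})$ ``by Poincar\'e duality'', and it asserts the injectivity of $\cup e$.

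Here is a counterexample. Take a nonzero homomorphism $\phi:\pi_1(\Sigma)\to\CC$ and set $\bar\rho(\gamma)=\begin{pmatrix}1&\phi(\gamma)\\0&1\end{pmatrix}$.
\begin{itemize}
\item Here $r=2$ and $r_1=1$.
\item The coinvariants $V_{\pi_1(\Sigma)}=\CC^2/\CC e_1$ are one-dimensional, and $e\cup e_1=0$.
\item Hence $\dim H^1(\Sigma,\bar\rho)=2(2G-2)+2$.
\item The Gysin sequence then gives $\dim H^1(M,\rho)=2(2G-2)+3\neq r(2G-2)+2r_1$.
\end{itemize}
A direct count of cocycles from the presentation of $\pi_1(M)$ confirms the value $4G-1$.

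The correct statement assumes $\bar\rho$ completely reducible. This covers the irreducible and unitary representations to which the lemma is applied later. Then $V=V^{\pi_1(\Sigma)}\oplus V'$ with $V'_{\pi_1(\Sigma)}=0$. So $\langle v\rangle$ is a direct summand, which is what your injectivity argument actually needs, and $\dim H^2(\Sigma,\bar\rho)=r_1$. With that hypothesis your computation goes through.
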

\begin{proof}
    Suppose that we are in case $(1)$, i.e., there exists a representation $\bar{\rho}:\pi_1(\Sigma)\to \mathrm{GL}_r(\CC) $ such that $\rho=\bar{\rho}\circ\pi_*.$ One can define a flat bundle $E_{\bar{\rho}}\to \Sigma$ using the same construction recalled above. Moreover, we check that $\mathcal E_\rho=\pi^*E_{\bar \rho}$, where $\pi:M\to \Sigma$ is the projection. Since $M\to \Sigma$ is a circle bundle, applying the Gysin exact sequence gives for any $k$:
$$\ldots \to H^k(\Sigma, E_{\bar \rho})\xrightarrow{\pi^*} H^k(M,\mathcal E_\rho)\xrightarrow{\pi_*}H^{k-1}(\Sigma, E_{\bar \rho}) \xrightarrow{\cup e}H^{k+1}(\Sigma, E_{\bar \rho})\to \ldots,  $$
where $\pi_*$ is the integration in the circle fiber and $\cup e$ denotes the cup product with the Euler class $e=\chi(\Sigma)\in H^2(\Sigma,\mathbb Z)\cong \ZZ.$ For $k=0$, we obtain
$$0\to H^0(\Sigma, E_{\bar \rho})\xrightarrow{\pi^*} H^0(M,\mathcal E_\rho)\xrightarrow{\pi_*}0, $$
which gives
$$
  H^0(M,\rho)=H^0(M,\mathcal E_{ \rho})  \cong H^0(\Sigma, E_{\bar \rho})=\{v\in \CC^r \mid \forall \gamma \in \pi_1(\Sigma),\ \bar{\rho}(\gamma)v=v\}.
$$
This then implies
$$\dim(H^0(M,\rho))=\dim \{v\in \CC^r \mid \forall \gamma \in \pi_1(\Sigma),\ \bar{\rho}(\gamma)v=v\} =r_1. $$
 Next, we compute the groups in degree $k=1$:
$$0\to H^1(\Sigma, E_{\bar \rho})\xrightarrow{\pi^*} H^1(M,\mathcal E_\rho)\xrightarrow{\pi_*}H^0(\Sigma, E_{\bar \rho})\xrightarrow{\cup e}H^{2}(\Sigma, E_{\bar \rho}). $$
In particular, applying the rank-nullity theorem to $\pi_*$ yields 
$$\dim(H^1(M,\rho))= \dim H^1(\Sigma, E_{\bar \rho})+\dim \big(\mathrm{ker}(\cup e:H^0(\Sigma, E_{\bar \rho})\to H^{2}(\Sigma, E_{\bar \rho}))\big).$$
Note however that the cup product with the Euler class $e\neq 0$ is injective on $H^0(\Sigma, E_{\bar \rho})$ so $\dim(H^1(M,\rho))= \dim H^1(\Sigma, E_{\bar \rho})$. We can compute this last dimension from Poincaré duality which gives $H^0(\Sigma, E_{\bar\rho})\cong H^2(\Sigma, E_{\bar \rho}) \cong \CC^{r_1}$ and the definition of the twisted Euler characteristic of $(\Sigma,\rho)$:
$$\chi(\Sigma,\rho)=\sum_{i=0}^2(-1)^i\mathrm{dim}\big(H^i(\Sigma, E_{\bar \rho})\big)\ \Rightarrow \  \dim H^1(\Sigma, E_{\bar \rho})=2r_1-\chi(\Sigma,\rho).$$
 Since the bundle is flat, $\chi(\Sigma,\rho)$ is equal to  $\mathrm{dim}(\rho)\chi(\Sigma)=r(2-2G)$ which concludes the proof of the lemma in the first case.

 Suppose now that $\rho$ is irreducible and does not factor through $\pi_1(\Sigma)$. We will show that it is acyclic. First, by Schur's lemma, we know that $\rho(c)=\zeta \id$ for some $\zeta \in \CC$. Taking the determinant in \eqref{eq:pi1pre} implies that $\zeta$ is a root of unity. Moreover, since $\rho$ does not factor through $\pi_1(\Sigma)$, we have $\zeta\neq 1$. This implies the differential in the fiber is null homotopic. Indeed, let $\iota: \mathbb S^1 \hookrightarrow M$ be the inclusion of a circle fiber. The twisted differential along the fiber is
 $$ d_{\mathbb S^1}^{\nabla}:\Omega^{0}(\mathbb S^1, \iota^*\mathcal E_\rho)\to \Omega^{1}(\mathbb S^1, \iota^*\mathcal E_\rho).$$
 One can compute the twisted cohomology groups of the fiber from the monodromy:
 $$H^0(\mathbb S^1,\iota^*\mathcal E_\rho)=\mathrm{ker}(\rho(c)-\mathrm{Id})=0,\quad  H^1(\mathbb S^1,\iota^*\mathcal E_\rho)=\mathrm{coker}(\rho(c)-\mathrm{Id})=0.$$
This shows that $d_{\mathbb S^1}^{\nabla}:\Omega^{0}(\mathbb S^1, \iota^*\mathcal E_\rho)\to \Omega^{1}(\mathbb S^1, \iota^*\mathcal E_\rho)$ is an isomorphism.

 We can then define a chain homotopy fiberwisely to show the de Rham complex of $(M,d^{\nabla_\rho})$ is also exact. More precisely, suppose $d^{\nabla_\rho} f=0$ and $f\in \Omega^k\otimes \mathcal{E}_{\rho}$ for some $k\geq 1$. The goal is to find $u\in \Omega^{k-1}\otimes \mathcal{E}_{\rho}$ with $d^{\nabla_\rho}u =f$. Let $V$ be a nonvanishing vector field tangent to the fiber direction. The vanishing of the fiberwise cohomology implies that
 \begin{equation*}
\mathcal{L}_V^{\nabla}:=d^{\nabla_\rho}\iota_V+\iota_V d^{\nabla_\rho}
 \end{equation*}
 is invertible on $\Omega^k\otimes \mathcal{E}_{\rho}$. In order to check this claim, we note that $\mathcal{L}_V^{\nabla}$ acts on each fiber and the action is given by $d^{\nabla}_{\Ss^1}\iota_V+\iota_V d_{\Ss^1}^{\nabla}$. On zero forms, it is given by $\iota_V d_{\Ss^1}^{\nabla}$ where each map is invertible since $V$ is nowhere vanishing. On $1$-forms, it is given by $d^{\nabla}_{\Ss^1}\iota_V$ where again each map is invertible. We can then write
$    u=\iota_V (\mathcal{L}_V^{\nabla})^{-1}f,$
which solves
\begin{equation*}
    d^{\nabla_\rho} u =d^{\nabla_\rho}\iota_V (\mathcal{L}_V^{\nabla})^{-1}f=(\mathcal{L}_V^{\nabla}-\iota_V d^{\nabla_\rho})(\mathcal{L}_V^{\nabla})^{-1}f=f-(\mathcal{L}_V^{\nabla})^{-1}\iota_V d^{\nabla_\rho}f=f.
\end{equation*}
This shows that $\rho$ is acyclic.
\end{proof}

To conclude this section, we recall an important result on the representation variety $\mathrm{Hom}(\pi_1(M), \mathrm{GL}_r(\CC))$, which follows from the arguments in \cite{RBC}. Recall that $\Hom_{\mathrm{irr}}(\pi_1(M), \mathrm{GL}_r(\CC))$ denotes the subset of irreducible representations.

\begin{prop}\label{prop:connected2}
There exists a Zariski open subset $\mathcal{V}$ in $\Hom_{\mathrm{irr}}(\pi_1(M), \mathrm{GL}_r(\CC))$, such that the following properties hold:
\begin{enumerate}
    \item $\mathcal{V}=\bigcup_{j=1}^{r(2G-2)}\mathcal{V}_j$ where $\mathcal{V}_j=\{\rho\in \mathcal{V}: \rho(c)=e^{2\pi i j/(r(2G-2))}\}$.
    \item Each $\mathcal{V}_j$ is a smooth and path-connected algebraic variety of dimension $(2G-1)r^2+1$.
    \item Each $\mathcal{V}_j$ contains a unitary representation.
    \item $\dim_{\CC} (\Hom_{\mathrm{irr}}(\pi_1(M), \mathrm{GL}_r(\CC))\setminus \mathcal{V})\leq (2G-1)r^2$.
\end{enumerate}
\end{prop}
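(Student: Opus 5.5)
The plan is to reduce to twisted representations of a central extension of the surface group, as in \cite{RBC}. Let $\rho$ be irreducible. By Schur's lemma $\rho(c)=\zeta\,\id$ for some $\zeta\in\CC^*$. Taking determinants in the relation $[a_1,b_1]\cdots[a_G,b_G]=c^{2G-2}$ of \eqref{eq:pi1pre} gives $\zeta^{r(2G-2)}=1$. Hence $\Hom_{\mathrm{irr}}(\pi_1(M),\mathrm{GL}_r(\CC))$ splits as a disjoint union, over the $r(2G-2)$ roots of unity $\zeta_j=e^{2\pi i j/(r(2G-2))}$, of the closed and open subsets $X_j$ on which $\rho(c)=\zeta_j\id$. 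Each $X_j$ is identified with the set of irreducible tuples $(A_1,B_1,\dots,A_G,B_G)\in\mathrm{GL}_r(\CC)^{2G}$ such that $\prod[A_i,B_i]=\zeta_j^{2G-2}\id=:\omega_j\id$. Here $\omega_j$ is an $r$-th root of unity, and $c$ is automatically central.

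The core of the argument is the study of the map $\mu:\mathrm{GL}_r^{2G}\to\mathrm{SL}_r$, $\mu(A,B)=\prod[A_i,B_i]$, near an irreducible tuple. Its differential at a point is surjective onto $\mathfrak{sl}_r$ exactly when the centralizer of the tuple consists of scalars. This is the standard computation: the cokernel of $d\mu$ is dual to the Lie algebra of the stabilizer, modulo the center. So $\mu^{-1}(\omega_j\id)\cap\{\text{irreducible}\}$ is smooth of dimension $2Gr^2-(r^2-1)=(2G-1)r^2+1$. This gives (2) for smoothness and dimension, and it also shows that every component of $X_j$ has this dimension.

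For (4) I would take $\mathcal V_j$ to be the subset of $X_j$ where moreover $(A_1,B_1)$, or a fixed chosen generic condition, holds. The simplest choice is to set $\mathcal V_j:=X_j$ itself if one can prove that $X_j$ is connected. Following \cite{RBC}, I would instead define $\mathcal V_j$ as the Zariski open subset where, say, $A_1$ is regular semisimple. Its complement in $X_j$ is a proper Zariski closed subset of the smooth equidimensional variety $X_j$, so it has dimension at most $(2G-1)r^2$. This gives (4), and $\mathcal V$ is Zariski open in $\Hom_{\mathrm{irr}}$.

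For (3), I would use the Narasimhan--Seshadri / twisted character variety picture. There exist irreducible unitary tuples with $\prod[A_i,B_i]=\omega\id$ for any $r$-th root of unity $\omega$. These correspond to stable bundles of degree $d$ with $e^{2\pi i d/r}=\omega$. For $G\ge2$ they are nonempty, and by a small perturbation we can assume the unitary tuple satisfies the open condition defining $\mathcal V_j$, since the unitary locus is Zariski dense in each component by dimension count. Applying $\zeta_j$ on $c$ gives a unitary representation in $\mathcal V_j$.

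The main obstacle is path-connectedness in (2). For this I would invoke the connectedness results of \cite{RBC} for twisted $\mathrm{GL}_r$-character varieties: the variety $\{\prod[A_i,B_i]=\omega\id\}$ is irreducible. The strategy there is to fibre over $(A_1,B_1)$ restricted to the regular semisimple locus and show that the fibres are connected using the count of solutions of the commutator equation. A smooth irreducible complex variety remains connected after removing a proper Zariski closed subset, because that subset has real codimension at least $2$. Hence $\mathcal V_j$ is path-connected.
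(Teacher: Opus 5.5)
There is a genuine gap, and it sits exactly where the content of the proposition lies. Your smoothness and dimension count is fine, and in fact cleaner than the paper's. The standard computation $\mathrm{Im}\,d\mu=\mathfrak z(\rho)^{\perp}=\mathfrak{sl}_r$ applies at \emph{every} irreducible tuple, not only where some pair $(A_\ell,B_\ell)$ is good. But everything after that rests on the irreducibility of the twisted irreducible locus $X_j$, which you attribute to \cite{RBC}. The paper does not quote any such global statement for the relation $\prod_i[A_i,B_i]=\omega_j\id$. It only extracts local ingredients from \cite{RBC} (Lemma 6, Proposition 5, Lemma 8) and assembles the twisted statement itself.

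Without irreducibility, your argument for (4) is circular. A proper Zariski closed subset of a smooth equidimensional variety may be an entire irreducible component, so properness alone gives no dimension drop. The same issue affects (3). The unitary locus $U(r)^{2G}\cap\mu^{-1}(\omega_j\id)$ is Zariski dense only in the components it meets, and even that needs the fact that it is a maximal totally real submanifold, not merely a dimension count. You also never produce a point with $A_1$ regular semisimple, so even $\mathcal V_j\neq\emptyset$ is not established.

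The missing idea is the step that turns the fibration argument into a global statement. Project \emph{away} from $(A_\ell,B_\ell)$ onto the other $2G-2$ matrices. Then Lemma 6 and Proposition 5 of \cite{RBC} only show that the good locus $\mathcal V_j^\ell$ is connected. Here the good locus is where $A_\ell$ \emph{and} $B_\ell$ are regular and $Z(A_\ell)\cap Z(B_\ell)=\CC\id$; the condition ``$A_1$ regular semisimple'' alone does not meet their hypotheses. Nothing in that argument excludes a component of $X_j$ lying entirely in the bad locus.

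The paper's item (4) is precisely what handles this. In such a component $\mathcal Z$, one first shows that $A_1,B_1$ can be taken regular. By irreducibility of $\mathcal Z$, the pair $(A_1,B_1)$ then always lies in $\mathcal W_0=\{\dim(Z(A)\cap Z(B))>1\}$, which has dimension $\le 2r^2-2(r-1)$. Meanwhile $(A_2,B_2)$ lies in a fibre $\{[A,B]=z\}$ of dimension $\le r^2+r$. Hence $\dim\mathcal Z\le(2G-1)r^2-r+2$.

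Combine this estimate with your own observation that $X_j$ is smooth of pure dimension $(2G-1)r^2+1$: no bad component can exist, so $X_j$ is connected and hence irreducible. At that point your choice $\mathcal V_j=X_j$ and your density argument for (3) go through. The paper instead uses an explicit clock-and-shift unitary pair for (3), which avoids Narasimhan--Seshadri. So the plan is repairable, but the step you outsource is the heart of the proof.
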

\begin{proof}For a matrix $M\in \mathrm{M}_{r\times r}(\CC)$, we denote by $Z(M):=\{N\in \mathrm{M}_{r\times r}(\CC)\mid [M,N]=0\}$ its centralizer. We will say that $M$ is regular if $Z(M)$ has dimension $r$ (equivalently, $M$ has $r$ distinct eigenvalues). 
 We define
\begin{equation*}
\begin{split}
    \mathcal{V}:=\{&\rho\in \Hom_{\mathrm{irr}}(\pi_1(M), \mathrm{GL}_r(\CC)): \text{ there exists } \ell \text{ such that } \\
    &\rho(a_{\ell}),\, \rho(b_{\ell}) \text{ are regular and } Z(\rho(a_{\ell}))\cap Z(\rho(b_{\ell})) = \CC \id\}.
\end{split}
\end{equation*}
Since the complement of $\mathcal{V}$ is given by some algebraic relations, we know that $\mathcal{V}$ is Zariski open. We now show the four properties of $\mathcal V$ claimed in the proposition.
\begin{enumerate}
    \item Let $\rho\in \mathcal V$. Since $\rho(c)$ commutes with all the elements in $\rho(\pi_1(M))$, by the definition of $\mathcal{V}$, $\rho(c)$ must be a scalar matrix. Since $\det(\rho(c))=1$, we know $\rho(c)=e^{2\pi i j /(r(2G-2))}$ for some~$j$.
    \item The smoothness comes from the submersion theorem. By \cite[Lemma 6]{RBC}, the differential of the map
    \begin{equation*}
        (A,B)\mapsto [A,B] : \mathrm{GL}_r(\CC)\times \mathrm{GL}_r(\CC)\to \mathrm{SL}_r(\CC)
    \end{equation*}
    is surjective at $(A,B)$ if $A,B$ are regular and $Z(A)\cap Z(B)=\CC \id$.
    Therefore, the following map
    \begin{equation*}
        (A_1,\cdots,A_G,B_1,\cdots,B_G)\mapsto [A_1,B_1]\cdots [A_G,B_G]:\mathrm{GL}_r(\mathbb{C})^{2G}\to \mathrm{SL}_r(\CC)
    \end{equation*}
    has surjective differential at $A_i=\rho(a_i)$ and $B_i=\rho(b_i)$ in $\mathcal{V}$. By the submersion theorem, we conclude $\mathcal{V}$ is smooth with dimension $2G r^2-(r^2-1)=(2G-1)r^2+1$. In order to show the connectedness, we note that it suffices to show that
    \begin{equation*}
    \begin{split}
     \mathcal{V}_j^{\ell}:=\{&\rho\in \Hom(\pi_1(M), \mathrm{GL}_r(\CC)): 
    \rho(a_{\ell}),\, \rho(b_{\ell}) \text{ are regular and }\\
    &Z(\rho(a_{\ell}))\cap Z(\rho(b_{\ell})) = \CC \id,\,\, \rho(c)=e^{2\pi i j/(r(2G-2))}\}
    \end{split}
    \end{equation*}
    is connected, since they have a nontrivial intersection for different $\ell$. Consider the projection $p: \mathcal{V}_j^{\ell}\to \mathrm{GL}_r(\CC)^{2G-2}$ to the components not equal to $\rho(a_{\ell}),\rho(b_{\ell})$. By \cite[Proposition 5]{RBC}, for a Zariski open set $\mathcal{C}$ of $C\in \mathrm{SL}_r(\CC)$, the variety
    \begin{equation*}
        \{(A,B)\in \mathrm{GL}_r(\CC)^2:[A,B]=C \text{ and } A,B \text{ regular with } Z(A)\cap Z(B) =\CC\id\}
    \end{equation*}
    is connected. Therefore, the fibers over a Zariski open subset $\mathcal{D}$ in $\mathrm{GL}_r(\CC)^{2G-2}$ are connected. Since $\mathrm{GL}_r(\CC)^{2G-2}$ is irreducible, $\mathcal{D}$ is connected and $p^{-1}(\mathcal{D})$ is connected. Since $p^{-1}(\mathcal{D})$ is also dense, this implies the connectedness of $\mathcal{V}_j^{\ell}$. 
    \item First we find $A,B\in U(r)$ such that
    \begin{equation*}
        [A,B]=ABA^{-1}B^{-1} = \omega \, \id_r,\quad \omega=e^{2\pi i/r}.
    \end{equation*}
    We can just take
    \begin{equation*}
        A=\begin{pmatrix}
            0&1&0&\cdots&0\\
            0&0&1&\cdots&0\\
            \vdots&\vdots&\vdots&\ddots&\vdots\\
            0&0&0&\cdots&1\\
            1&0&0&\cdots&0
        \end{pmatrix},\quad B=\begin{pmatrix}
            1&0&0&\cdots&0\\
            0&\omega&0&\cdots&0\\
            0&0&\omega^2&\cdots&0\\
            \vdots&\vdots&\vdots&\ddots&\vdots\\
            0&0&0&\cdots&\omega^{r-1}
        \end{pmatrix}.
    \end{equation*}
It is direct to check that $A,B$ are regular and $Z(A)\cap Z(B)=\CC \id$. Then we can similarly construct $A_2,B_2\in U(r)$ such that
\begin{equation*}
        [A_2,B_2]=A_2B_2A_2^{-1}B_2^{-1} = \omega^{-1}\zeta \, \id_r,\quad \zeta=e^{2\pi i j/r}.
\end{equation*}
We can define a representation by 
\begin{equation*}
\begin{split}
    &\rho(a_1)=A, \,\, \rho(b_1)=B, \,\, \rho(a_2)=A_2, \,\, \rho(b_2)=B_2,\\
    &\rho(c)=e^{2\pi i j/((2G-2)r)}\id,\quad \rho(a_i)=\rho(b_i)=\id,\quad i>2.
\end{split}
\end{equation*}
This gives a unitary representation in $\mathcal{V}_j$.
\item We will assume $r\geq 2$ since the case $r=1$ is obvious. Let $\rho \in \Hom_{\mathrm{irr}}(\pi_1(M), \mathrm{GL}_r(\CC))$. By Schur's lemma and irreducibility of $\rho$, we have $\rho(c)\in \CC\id$. The equation \eqref{eq:pi1pre} implies $\rho(c) = e^{2\pi i j/(r(2G-2))} \id$ for some $j$. Suppose $\mathcal{Z}$ is an irreducible component of 
\[\{\rho\in \Hom_{\mathrm{irr}}(\pi_1(M), \mathrm{GL}_r(\CC)): \rho(c)=e^{2\pi i j/(r(2G-2))}\}\]
that does not intersect $\mathcal{V}_j$ (we are done if such $\mathcal{Z}$ does not exist), then we can define the Zariski open subsets
\begin{equation*}
    \begin{split}
        \mathcal{Z}_1&= \{\rho\in \mathcal{Z}: \rho(a_1),\rho(b_1) \text{ are regular}\},\\
        \mathcal{Z}_2&= \{\rho\in \mathcal{Z}: Z(\rho(a_1))\cap Z(\rho(b_1)) = \CC \id\}.
    \end{split}
\end{equation*}
We claim $\mathcal{Z}_1$ is not empty. Suppose $\rho_0\in \mathcal{Z}$ does not belong to any other irreducible component. Let ${W}_0$ be an irreducible component of 
\begin{equation*}
    \{(A,B)\in \mathrm{GL}_r(\CC): [A,B]=e^{2\pi i j/r}[\rho_0(a_G),\rho_0(b_G)]^{-1}\cdots[\rho_0(a_2),\rho_0(b_2)]^{-1}\}.
\end{equation*}
By \cite[\S 1]{RBC}, $W_0$ contains $(A_1,B_1)$ that are regular. Thus $\mathcal{Z}_1$ contains an element with 
\begin{equation*}
    \rho(a_1)=A_1,\,\, \rho(b_1)=B_1, \,\, \rho(a_j)=\rho_0(a_j),\,\, \rho(b_j)=\rho_0(b_j),\quad j\geq 2
\end{equation*}
and is nonempty. Since $\mathcal Z$ is irreducible, it is connected and thus $\mathcal{Z}_2$ must be empty, i.e., $\dim_{\CC}(Z(\rho(a_1))\cap Z( \rho(b_1)))>1$ for any $\rho \in \mathcal{Z}$. Now we consider the map
\begin{equation*}
    p: \mathcal{Z}\to (\mathrm{GL}_r(\CC))^{2G-2},\quad p(\rho)=(\rho(a_1),\rho(b_1),\rho(a_3),\rho(b_3),\cdots, \rho(a_G),\rho(b_G)).
\end{equation*}
Let \[\mathcal{W}_0:=\{(A,B)\in \mathrm{GL}_r(\CC)\times \mathrm{GL}_r(\CC) : \dim_{\CC}(Z(A)\cap Z(B)) >1\}.\]
By \cite[Lemma 8 (iii)]{RBC}, $\dim_{\CC}(\mathcal{W}_0)\leq 2r^2-2(r-1)$ and thus
\begin{equation*}
    \dim_{\CC}(p(\mathcal{Z}))\leq \dim_{\CC}(\mathcal{W}_0\times (\mathrm{GL}_r(\CC))^{2G-4})\leq (2G-2)r^2-2(r-1).
\end{equation*}
Each fiber of $p$ is a subvariety of
\begin{equation*}
    \begin{split}
        \mathcal{W}_\rho&:=\{(A,B)\in \mathrm{GL}_r(\CC)\times \mathrm{GL}_r(\CC): [A,B]=z\},\\
        &z=e^{2\pi i j/r}[\rho(a_1),\rho(b_1)]^{-1}[\rho(a_G),\rho(b_G)]^{-1}\cdots[\rho(a_3),\rho(b_3)]^{-1}.
    \end{split}
\end{equation*}
By \cite[Lemma 8 (ii)]{RBC}, $\dim_{\CC}(\mathcal{W}_{\rho})\leq r^2+r$. Therefore,
\begin{equation*}
    \dim_{\CC}(\mathcal{Z})\leq  \dim_{\CC}(p(\mathcal{Z}))+r^2+r \leq (2G-1)r^2-r+2\leq (2G-1)r^2
\end{equation*}
since we assumed $r\geq 2$.\qedhere
\end{enumerate}
\end{proof}

\subsection{Twisted Pollicott--Ruelle resonances.}
\label{sec:Resonance}
In this section, we consider an oriented closed Riemannian manifold $(\Sigma,g)$ of dimension $n+1\geq 2.$ We assume the geodesic flow $(\varphi_t^g)_{t\in \mathbb R}$ on the unit tangent bundle $M$ is an \emph{Anosov flow}, which means that if we denote by $X=\tfrac{d}{dt}|_{t=0}\varphi_t^g$ the geodesic vector field, then there exists a flow-invariant, continuous splitting $TM=E_u\oplus \mathbb R X\oplus E_s$ of the tangent bundle and $C,\theta>0$ such that for any $p\in M$, 
\begin{align*}
&\|d\varphi_t^g (p)v_s\|\leq Ce^{-\theta t}\|v_s\|, \ v_s\in E_s(p), \  t\geq 0,
\\&\|d\varphi_t^g (p)v_u\|\leq Ce^{-\theta |t|}\|v_u\|, \ v_u\in E_u(p), \ t\leq 0.
\end{align*}
The bundle $E_s$ (resp. $E_u$) is the \emph{stable} (resp. \emph{unstable}) bundle of $(\varphi_t^g)_{t\in \mathbb R}.$ The Anosov splitting comes with a dual splitting on the cotangent bundle of $M$:
\begin{align*}T^*M=E_u^*\oplus \mathbb R \alpha \oplus E_s^*,\quad E_u^*(E_u\oplus \mathbb RX)=0, \ E_s^*(E_s\oplus \mathbb RX)=0,
\end{align*}
and where $\alpha\in C^{\infty}(M;T^*M)$ is the contact $1$-form on $M$ which satisfies $\alpha(X)\equiv 1$ and $d\alpha(X,\cdot)\equiv 0$.

 For a closed geodesic $\gamma$, we will write $[\gamma]$ for the class in $\pi_1(M)$ representing it. Recall that there is a $C>0$ such that for any $T>0,$ the number of closed geodesics of length less than $T$ is bounded by $Ce^{CT}$. This justifies that \eqref{eq:zeta} converges for $s\in \CC$ with $\mathrm{Re}(s)\gg 1$.

Let $\rho \in \mathrm{Hom}(\pi_1(M), \mathrm{GL}_r(\CC))$ and let $\mathcal E_{\rho}$ denote the associated flat vector bundle with differential $d^{\nabla_\rho}$. Define the graded vector bundle
\begin{equation}
\label{eq:forms}\mathscr E_\rho:=\bigoplus_{k=0}^{2n+1} \mathscr E^k_\rho,\quad \mathscr E^k_\rho=\Lambda^kT^*M\otimes \mathcal E_\rho, \quad \mathscr E^k_{\rho,0}:=\mathscr E^k_\rho\cap \mathrm{ker}(\iota_X). 
\end{equation}
For $k=0,1,2,\ldots, 2n$, the generator $X$ acts naturally on the smooth vector bundle $C^{\infty}(M; \mathscr E^k_{\rho,0})$.  The action is given by $\mathcal L_{X^\rho}:=\iota_X d^{\nabla_\rho}+d^{\nabla_\rho} \iota_X$.

One can associate to  the action of $\mathcal L_{X^\rho}$ on $C^{\infty}(M, \mathscr E^k_{\rho,0})$ a discrete spectrum $\mathrm{Res}^{k}(X,\rho)\subset \mathbb C$, the \emph{Pollicott--Ruelle resonances} by making it act on \emph{anisotropic spaces}, see for instance \cite{BKL,Bl,BT,GouLi,Fau08,Fau10,zeta,DG16}.

More precisely, from \cite{Fau10} (see also \cite{DG16} for the extension to smooth vector bundles), the resolvent $R_k^\rho(\lambda):L^2(M; \mathscr E^k_{\rho,0})\to L^2(M; \mathscr E^k_{\rho,0})$, defined for $\lambda\in \mathbb C$ with $\mathrm{Re}(\lambda)\gg 1$ admits a meromorphic extension $R_k^\rho(\lambda)$ to $\mathbb C$
$$R_k^\rho(\lambda):C^{\infty}(M; \mathscr E^k_{\rho,0})\to \mathcal D'(M; \mathscr E^k_{\rho,0}). $$
The poles of the extension are intrinsic and are called the resonances of $X$ acting on $\mathscr E^k_{\rho,0}$. The set of resonances is denoted by $\mathrm{Res}^k(X,\rho)$ or $\mathrm{Res}^k(\rho)$ if there is no possible confusion on the metric. A complex number  $\lambda_0\in \mathbb C$ is in $ \mathrm{Res}^k(\rho)$, if and only if
\begin{equation}
\label{eq:res}
    \Res_0^{k,1}(\rho,\lambda_0)=\{u\in \mathscr{D}'(M;\mathscr E^k_{\rho,0}): \mathcal{L}_{X^\rho} u = \lambda_0 u , \, \WF(u)\subset E_u^* \}\neq \{0\},
\end{equation}
where $\mathrm{WF}(u)$ denotes the wavefront set of a distributional current $u$, see \cite[Chapter 3]{Hor}. The non-zero elements of $\Res_0^{k,1}(\rho,\lambda_0)$ are called \emph{resonant states} at $\lambda_0$. The spectral projector at $\lambda_0$ is given by
\begin{equation}
\label{eq:Pi_k}\Pi^k_\rho(\lambda_0)=\frac{1}{2i\pi}\int_{\gamma}R_k^\rho(z)dz,
\end{equation}
where $\gamma$ is a small loop around $\lambda_0$. The (algebraic) multiplicity of $\lambda_0$ is given by the rank of the spectral projector.  The \emph{generalized resonant states} are the elements in the range of $\Pi^k_\rho(\lambda_0)$:
\begin{equation}
\label{eq:genres}
\Res_0^{k,\infty}(\rho,\lambda_0)=\{u\in \mathscr{D}'(M;\mathscr E^k_{\rho,0}): \exists \ell\in \mathbb{N},\ (\mathcal{L}_{X^\rho}-\lambda_0)^{\ell} u = 0 , \, \WF(u)\subset E_u^* \}.
\end{equation}
We will say that the resonance $\lambda_0$ has \emph{no Jordan block} if $\Res_0^{k,\infty}(\rho,\lambda_0)=\Res_0^{k,1}(\rho,\lambda_0)$. We conclude this section by recalling the link between the zeta function $\zeta_{g,\rho}(s)$ and the generalized resonant states. Define the \emph{dynamical determinant} of order $k$ to be
    \begin{equation}
        \label{eq:zeta_l}
\zeta_{k,\rho}(s)=\exp\left(-\sum_{\gamma\in \mathcal{P}}\sum_{j=1}^{+\infty}\frac 1 j \mathrm{tr}\big(\rho([\gamma])^j \big)\frac{\mathrm{tr}(\Lambda^k\mathcal P_{\gamma^j})}{|\mathrm{det}(\mathrm{Id}-\mathcal P_{\gamma^j})|}e^{-sj\ell_g(\gamma)}\right),
    \end{equation}
    where $\mathcal P_\gamma=d\varphi_{-\ell_g(\gamma)}|_{E_u\oplus E_s}(p)$, for a point $p$ on $\gamma$, is the Poincaré map along the geodesic $\gamma.$
    Recall that one has, see \cite[p. 20]{zeta} and \cite{DG16} for the extension to vector bundles,
\begin{equation}
    \label{eq:zerozetal}
    \forall s\in \mathbb C, \quad \mathcal \zeta_{\ell,\rho}(s)=0 \ \iff \ s\in \mathrm{Res}^\ell(\rho),
\end{equation}
and the multiplicity of the zero coincides with the algebraic multiplicity of $s$ as a resonance.

We can rewrite the twisted zeta function, for any $s\in \mathbb C$ such that $\mathrm{Re}(s)\gg 1,$
    \begin{align*}
\zeta_{g,\rho}(s)&=\prod_{\gamma\in \mathcal{P}} \det(\mathrm{Id}-\rho([\gamma])e^{-s\ell_g(\gamma)})=\exp\left(-\sum_{\gamma\in \mathcal{P}}\sum_{j=1}^{+\infty}\frac 1 j \mathrm{tr}\big(\rho([\gamma])^j \big)e^{-sj\ell_g(\gamma)}\right)
        \\&=\exp\left(-\sum_{\gamma\in \mathcal{P}}\sum_{j=1}^{+\infty}\frac 1 j \mathrm{tr}\big(\rho([\gamma])^j \big)\sum_{\ell=0}^{2n}(-1)^{\ell+1}\frac{\mathrm{tr}(\Lambda^\ell\mathcal P_{\gamma^j})}{|\mathrm{det}(\mathrm{Id}-\mathcal P_{\gamma^j})|}e^{-sj\ell_g(\gamma)}\right)=\prod_{\ell=0}^{2n}\zeta_{\ell,\rho}(s)^{(-1)^{\ell+1}},
    \end{align*}
    where we used the relation
    $$|\mathrm{det}(\mathrm{Id}-\mathcal P_{\gamma^j})|= \sum_{\ell=0}^{2n}(-1)^{\ell+1}\mathrm{tr}(\Lambda^\ell\mathcal P_{\gamma^j}).$$ The previous relation extends to $s\in \mathbb C$. In particular, using \eqref{eq:zerozetal} at $s=0$ yields \eqref{eq:orderofvanish}.
    \subsection{Co-resonant states and pairing}
    Let $\rho^*$ be the adjoint representation of $\rho$. We have a notion of \emph{(generalized) co-resonant state} dual to the (generalized) resonant states. A distribution is a (generalized) co-resonant state of $(X,\rho)$ if and only if it is a (generalized) resonant state of $(-X,\rho^*)$, in the sense explained above. Their set will be denoted by $\mathrm{Res}_0^{*,k,\bullet}(\rho,\lambda)$ for $\bullet \in \mathbb N\cup \{+\infty\}.$

There is a natural pairing $\langle \cdot,\cdot\rangle_{\mathcal E_\rho \times \mathcal E_{\rho^*}}$ between $\mathcal{E}_{\rho}$ and $\mathcal{E}_{\rho^*}$ given by the duality pairing on $\CC^r \times (\CC^r)^*$. Note that this indeed descends to a pairing to $\mathcal{E}_{\rho}\times \mathcal{E}_{\rho^*}$ since for any $ x\in\widetilde{M}$ and any $ \gamma \in \pi_1(M)$, one has
$$ \ \forall (v,w)\in \CC^r \times (\CC^r)^* ,\ \langle \rho(\gamma)v,\rho^*(\gamma)w\rangle=\langle \rho(\gamma)^{-1}\rho(\gamma)v,w\rangle=\langle v,w\rangle. $$
The pairing is non degenerate and extends to the bundle $\mathcal E_\rho \to M$ and to differential forms with values in the flat bundle. More precisely, we will work with the following non-degenerate pairing 
\begin{equation}
\label{eq:pairing}
(u,v)\in \mathscr{E}_{\rho,0}^k\times \mathscr{E}_{\rho^*,0}^{2n-k},\quad   h_{\alpha}(u,v)=\int_M u\wedge v\wedge \alpha.
\end{equation}

    Remark that the stable and unstable bundles of $-X$ and $X$ are flipped. In particular, the pairing $h_\alpha$ extends to $\mathrm{Res}_0^{*,k,\infty}(\rho,\lambda)\times \mathrm{Res}_0^{2n-k,\infty}(\rho,\lambda')$ for any $\lambda,\lambda'\in \CC$, by the wavefront conditions defining generalized resonant states, see \eqref{eq:genres}. By \cite[Equation (2.51)]{zeta3}, the pairing
\begin{equation}
    \label{eq:nondeg}
    h_\alpha: \Res_{0}^{k,\infty}(\rho,0)\times \Res_{0}^{*,2n-k,\infty}(\rho,0)\to \mathbb C
\end{equation}
is non-degenerate. For later use, we record that for $k=1,2$,
\begin{equation}
    \label{eq:adjointd}
    \forall (u,v) \in\Res_{0}^{k-1,\infty}(\rho,0)\times \Res_{0}^{*,2n-k,\infty}(\rho,0), \quad h_{\alpha}(d^{\nabla_\rho}u,v)=(-1)^k h_\alpha(u,d^{\nabla_{\rho^*}} v).
\end{equation}
Indeed, we compute directly using Stoke's theorem
\begin{align*}
 \quad h_{\alpha}(d^{\nabla_\rho}u,v)&=\int_M d^{\nabla_\rho}  u\wedge v\wedge \alpha=\int_M d (  u\wedge v\wedge \alpha)+ (-1)^k\int_M u\wedge d^{\nabla_{\rho^*}} v\wedge \alpha +\int_M u\wedge  v\wedge d\alpha
 \\&=(-1)^k\int_M u\wedge d^{\nabla_{\rho^*}} v\wedge \alpha=(-1)^k h_\alpha(u,d^{\nabla_{\rho^*}} v),
\end{align*}
where we used \eqref{eq:Leib}, $\iota_Xu=0$ and $\iota_Xv=0.$
\section{Perturbation theory}
\label{sec:pert}
In order to prove Theorems \ref{maintheo} and \ref{theo2}, we will use some perturbation results. Let $(\Sigma,g)$ be a closed Anosov surface and let $r\in \mathbb N$. We will apply perturbation theory to the twisted Pollicott--Ruelle resonances. Before that, we will need to identify the flat bundles $\mathcal E_\rho$ for different representations $\rho \in \mathrm{Hom}(\pi_1(M), \mathrm{GL}_r(\CC)). $  Actually, we show in the next lemma that the identification can be chosen (locally) to depend analytically in $\rho$, which will be important in the arguments of \S \ref{sec:proofMaintheo}. To state the result, we consider a compactly supported function $\chi\in C_c^{\infty}(\widetilde{M},\mathbb R_+)$ such that
\begin{equation}
    \label{eq:chi}
    \forall x\in \widetilde{M},\quad \sum_{\gamma \in \pi_1(M)}\chi(\gamma\cdot x)=1.
\end{equation}
\begin{lem}
\label{lem:ident}
    Let $\rho_0 \in \mathrm{Hom}(\pi_1(M), \mathrm{GL}_r(\CC)).$ There is a neighborhood $ \mathcal U$ of $\rho_0$ such that the map $F_{\rho}: \widetilde{M}\times \CC^r\to \widetilde{M}\times \CC^r$, where 
    \begin{equation}
        \label{eq:f_rho}
   \forall (x,v)\in \widetilde{M}\times \CC^r, \ F_{\rho}(x,v)=(x,f_\rho(x)v),\quad f_\rho(x)=\sum_{\gamma \in \pi_1(M)}\chi(\gamma\cdot x)\rho(\gamma)^{-1}\rho_0(\gamma)
    \end{equation}
    descends to an isomorphism of smooth bundles between $\mathcal E_{\rho_0}$ and $\mathcal E_\rho$. Moreover, the isomorphism $F_\rho$ depends analytically on $\rho \in \mathcal U.$
\end{lem}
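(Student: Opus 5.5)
We have to check three things: that $F_\rho$ intertwines the two equivalence relations defining $\mathcal E_{\rho_0}$ and $\mathcal E_\rho$, that $f_\rho(x)$ is invertible for every $x$ when $\rho$ is close to $\rho_0$, and that everything depends analytically on $\rho$.

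\textbf{Step 1: local finiteness.} Since $\chi$ has compact support and $\pi_1(M)$ acts properly discontinuously and cocompactly, for $x$ in any compact set only finitely many $\gamma$ satisfy $\chi(\gamma\cdot x)\neq 0$. Hence the sum defining $f_\rho$ is locally finite, and $f_\rho$ is smooth in $x$. On a fundamental domain $D$ with compact closure, the relevant set $S=\{\gamma : \chi(\gamma\cdot x)\neq 0 \text{ for some } x\in \overline D\}$ is finite. For each $\gamma\in S$, the entries of $\rho(\gamma)^{-1}\rho_0(\gamma)$ are polynomial in the entries of $\rho(\gamma)$ and $\det\rho(\gamma)^{-1}$. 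These in turn are polynomial in the generators' images, so they are analytic on the representation variety.

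\textbf{Step 2: equivariance.} For $\eta\in\pi_1(M)$, reindex $\gamma\mapsto\gamma\eta^{-1}$ (taking care with the op-convention, so that $\gamma\cdot(\eta\cdot x)=(\gamma\eta)\cdot x$ in the identification used):
\[
f_\rho(\eta\cdot x)=\sum_\gamma \chi(\gamma\eta\cdot x)\rho(\gamma)^{-1}\rho_0(\gamma)=\sum_{\gamma'}\chi(\gamma'\cdot x)\rho(\eta)\rho(\gamma')^{-1}\rho_0(\gamma')\rho_0(\eta)^{-1}.
\]
This gives $f_\rho(\eta\cdot x)\rho_0(\eta)=\rho(\eta)f_\rho(x)$, which says exactly that $F_\rho$ maps $(x,v)\sim_{\rho_0}(\eta x,\rho_0(\eta)v)$ to pairs that are $\sim_\rho$-equivalent. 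So $F_\rho$ descends to a smooth bundle map $\mathcal E_{\rho_0}\to\mathcal E_\rho$ covering the identity. If the paper's op-convention forces the reversed multiplication order, the same computation works with $\gamma\mapsto\eta^{-1}\gamma$.

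\textbf{Step 3: invertibility, the main point.} At $\rho=\rho_0$ we get $f_{\rho_0}(x)=\sum_\gamma\chi(\gamma\cdot x)\,\id=\id$ by \eqref{eq:chi}. By the equivariance of Step 2, $\det f_\rho(\eta x)$ is a nonzero multiple of $\det f_\rho(x)$, so it suffices to check invertibility on $\overline D$. There $f_\rho(x)-\id=\sum_{\gamma\in S}\chi(\gamma x)\bigl(\rho(\gamma)^{-1}\rho_0(\gamma)-\id\bigr)$, whose norm is at most $\max_{\gamma\in S}\|\rho(\gamma)^{-1}\rho_0(\gamma)-\id\|$. This bound is smaller than $1$ on a neighborhood $\mathcal U$ of $\rho_0$, because $S$ is finite. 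Hence $f_\rho(x)$ is invertible (via a Neumann series) for all $x$ when $\rho\in\mathcal U$.

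Therefore $F_\rho$ is a bundle isomorphism, with inverse induced by $f_\rho^{-1}$, and it depends analytically on $\rho\in\mathcal U$. The only real subtlety is making the neighborhood uniform in $x$, which is handled by the reduction to the compact set $\overline D$ with its finite index set $S$.
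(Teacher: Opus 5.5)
Your proposal is correct and follows essentially the same route as the paper. The same reindexing gives $\rho(\eta)f_\rho(x)=f_\rho(\eta\cdot x)\rho_0(\eta)$, and invertibility comes from $f_{\rho_0}\equiv\mathrm{Id}$ together with analytic dependence on $\rho$. The paper takes uniformity in $x\in\widetilde{M}$ for granted; your reduction via equivariance to a compact fundamental domain with finite index set $S$, plus the convex-combination bound, makes it explicit.
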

\begin{proof}
    Let $(x,v)\in \widetilde{M}\times \CC^r $ and let $\gamma_0 \in \pi_1(M)$. We first check that the map $F_\rho$ descends to the flat bundles, i.e., we show $F_\rho(x,v)\sim_{\rho}F_\rho(\gamma_0\cdot x,\rho_0(\gamma_0)v).$
    In other words, we want to verify that $(x, f_\rho(x)v)\sim_{\rho}(\gamma_0\cdot x,f_\rho(\gamma_0 \cdot x)\rho_0(\gamma_0)v)$. This is true since 
    \begin{align*}
        \rho(\gamma_0)f_\rho(x)v&=\rho(\gamma_0)\sum_{\gamma \in \pi_1(M)}\chi(\gamma\cdot x)\rho(\gamma)^{-1}\rho_0(\gamma)v=\sum_{\gamma \in \pi_1(M)}\chi(\gamma\cdot x)\rho(\gamma\gamma_0^{-1})^{-1}\rho_0(\gamma)v
        \\&=\sum_{\gamma' \in \pi_1(M)}\chi(\gamma'\cdot \gamma_0\cdot x)\rho(\gamma')^{-1}\rho_0(\gamma'\gamma_0)v=f_\rho(\gamma_0 \cdot x)\rho_0(\gamma_0)v.
    \end{align*}
    This shows that the map $F_\rho$ descends to map $F_\rho: \mathcal E_{\rho_0}\to \mathcal E_{\rho}$ which preserves the fibers. It is clearly a smooth map. Since $f_{\rho_0}\equiv \mathrm{Id}$ and since $\rho \mapsto f_\rho$ (and thus $F_\rho$) is an analytic map in $\rho$, there is a small neighborhood $\mathcal U$ such that for any $\rho \in \mathcal U$ and for any $x\in \widetilde{M},$ one has $\mathrm{det}(f_\rho(x))\neq 0$. This shows that $F_\rho$ is an isomorphism for $\rho \in \mathcal U$. 
\end{proof}
Let $\rho_0 \in \mathrm{Hom}(\pi_1(M), \mathrm{GL}_r(\CC))$ and let $\mathcal U$ be the neighborhood of $\rho_0$ obtained in Lemma \ref{lem:ident}. For any $\rho \in \mathcal U$, the Lie derivative $\mathcal L_{X^\rho}$ pullbacks to an operator on $\mathcal{E}_{\rho_0}$ we will denote by $\widetilde{\mathcal L}_{X^\rho}$. Its resolvent on $k$-forms will be denoted by  $\widetilde{R}_k^\rho(\lambda)$ and for a Pollicott--Ruelle resonance $\lambda_0 \in \mathrm{Res}^k(\rho_0)$, we can define a spectral projector $\widetilde{\Pi^k_\rho}(\lambda_0)$ by integrating on a small enough loop around $\lambda_0$, as in \eqref{eq:Pi_k}. The small loop includes all the resonances of $\widetilde{R}_k^{\rho}(\lambda)$ close to~$\lambda_0$.
\begin{prop}
\label{prop:pert}
    Let $\rho_0 \in \mathrm{Hom}(\pi_1(M), \mathrm{GL}_r(\CC))$. Let $k=0,1,2$ and $\lambda_0 \in \mathrm{Res}^k(\rho_0)$ be a Pollicott--Ruelle resonance of $\rho_0$. Then there exists a neighborhood $ \mathcal U$ of $\rho_0$ such that
       $ (\widetilde{\Pi^k_\rho}(\lambda_0))_{\rho \in \mathcal U}$ is a family of operators of rank equal to $\mathrm{dim}(\mathrm{Res}_0^{k,\infty}(\rho_0,\lambda_0))$ which depends analytically on $\rho \in \mathcal U$.
\end{prop}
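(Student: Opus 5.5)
The plan is to reduce the statement to standard analytic perturbation theory for Pollicott--Ruelle resonances on a fixed anisotropic space, as in \cite{Fau10,DG16} and the perturbation arguments of \cite{zeta3}. By Lemma \ref{lem:ident}, for $\rho$ in a neighborhood $\mathcal U$ of $\rho_0$ we conjugate $\mathcal L_{X^\rho}$ by $F_\rho$ to an operator $\widetilde{\mathcal L}_{X^\rho}$ acting on sections of the fixed bundle $\mathscr E^k_{\rho_0,0}$. Since $F_\rho$ is fiberwise, it commutes with $\iota_X$ and preserves $\ker\iota_X$. The first step is to compute the pulled-back connection. Writing $\nabla_\rho = F_\rho \circ (\nabla_{\rho_0} + A_\rho)\circ F_\rho^{-1}$ with $A_\rho = f_\rho^{-1} d f_\rho$, we get
\begin{equation*}
\widetilde{\mathcal L}_{X^\rho} = \mathcal L_{X^{\rho_0}} + \iota_X A_\rho,
\end{equation*}
acting on $\Omega^k_0$-valued forms. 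The perturbation $\iota_X A_\rho$ is a zeroth order, smooth endomorphism of $\mathscr E^k_{\rho_0,0}$. It depends analytically on $\rho$, because $f_\rho(x)$ is, locally in $x$, a finite sum of terms that are polynomial in the entries of $\rho(\gamma)^{-1}$ (only finitely many $\gamma$ satisfy $\chi(\gamma\cdot x)\ne0$ near a compact fundamental domain). It also vanishes at $\rho=\rho_0$, since $f_{\rho_0}\equiv \mathrm{Id}$.

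The second step is to fix one anisotropic Sobolev space $\mathcal H^{s}$ (built from an escape function adapted to the Anosov splitting of $X$, which does not depend on $\rho$) on which $\widetilde{\mathcal L}_{X^\rho}-\lambda$ is Fredholm of index zero for $\mathrm{Re}\,\lambda > -c s$, uniformly for $\rho\in\mathcal U$. This holds because the principal symbol and the subprincipal threshold estimates of \cite{Fau10,DG16} are insensitive to a small bounded zeroth order perturbation, up to shrinking $\mathcal U$. Choose $s$ large so that $\lambda_0$ lies in the Fredholm region, and pick a small circle $\gamma$ around $\lambda_0$ containing no other resonance of $\rho_0$. On $\gamma$, $\widetilde{\mathcal L}_{X^{\rho_0}}-z$ is invertible on $\mathcal H^s$, and $\rho\mapsto \widetilde{\mathcal L}_{X^\rho}-z$ is an analytic family of bounded operators $D^s\to\mathcal H^s$, where $D^s$ is the common domain. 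The inverse is then analytic in $(\rho,z)$ for $\rho$ near $\rho_0$ (Neumann series in the bounded perturbation), and is uniformly bounded on the compact set $\gamma$. Hence $\widetilde{\Pi^k_\rho}(\lambda_0)=\frac1{2i\pi}\int_\gamma \widetilde R_k^\rho(z)\,dz$ is analytic in $\rho$ as a bounded operator on $\mathcal H^s$.

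The third step concerns the rank. $\widetilde{\Pi^k_\rho}(\lambda_0)$ is an analytic family of idempotents: idempotence follows from the resolvent identity, exactly as for the unperturbed projector. An analytic family of projectors has locally constant rank, since $\|P-Q\|<1$ implies $\mathrm{rank}\,P=\mathrm{rank}\,Q$ via the Kato transformation. So the rank equals the rank at $\rho_0$, which is $\dim \mathrm{Res}_0^{k,\infty}(\rho_0,\lambda_0)$, using that the range of the projector on $\mathcal H^s$ coincides with the generalized resonant states with wavefront set in $E_u^*$ \cite{DG16}.

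The main obstacle is the uniformity in the second step: one must make sure that a single anisotropic space works for all $\rho\in\mathcal U$, and that the resolvent on $\mathcal H^s$ agrees with the meromorphic continuation $C^\infty\to\mathcal D'$ used to define $\widetilde R_k^\rho$. I would handle this by noting that the perturbation is a bounded operator on $\mathcal H^s$ with norm $O(|\rho-\rho_0|)$. The radial estimates only require the perturbation to be of order zero with bounded norm, so the Fredholm property and the inverse on $\gamma$ persist by a Neumann series argument. Identification with the continuation follows from uniqueness of meromorphic continuation.
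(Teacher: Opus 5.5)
Your proposal is correct and follows the same route as the paper: you transport everything to $\mathcal E_{\rho_0}$ via Lemma \ref{lem:ident}, use the analytic dependence of $\widetilde{\mathcal L}_{X^\rho}$ to get a resolvent that is analytic on a fixed anisotropic space, and conclude that the contour projector is analytic with constant rank. Your explicit computation $\widetilde{\mathcal L}_{X^\rho}=\mathcal L_{X^{\rho_0}}+\iota_X A_\rho$ shows the perturbation is a smooth zeroth-order term, so one anisotropic space, a Neumann series and the $\|P-Q\|<1$ rank argument suffice; this is a more elementary, self-contained justification of the step the paper delegates to Bonthonneau's uniform anisotropic spaces.
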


\begin{proof}
    Using the identification of Lemma~\ref{lem:ident}, we can use the theory of perturbation of Pollicott--Ruelle resonances due to Bonthonneau \cite{Bon}, see also the related work of Dang, Guillarmou, Rivière and Shen \cite{DGRS}. Indeed, from the construction of the flat connection recalled in \S \ref{sec:repr} and Lemma \ref{lem:ident}, we see that $\mathcal U \ni\rho\mapsto \widetilde{\mathcal L}_{X^\rho}$ depends analytically on $\rho$. Using uniform anisotropic spaces, the resolvent $\widetilde{R}_k^\rho(\lambda)$ (and thus the spectral projector by \eqref{eq:Pi_k}) is seen to depend analytically on $\rho$. This also shows that $(\widetilde{\Pi^k_\rho}(\lambda_0))_{\rho \in \mathcal U}$ is a family of operators of rank equal to $\mathrm{rk}(\Pi^k_{\rho_0}(\lambda_0))=\mathrm{dim}(\mathrm{Res}_0^{k,\infty}(\rho_0,\lambda_0))$.
\end{proof}

    \section{Order of vanishing when there is no Jordan block}
    \label{sec:ofv}
In this section, we compute $m(g,\rho)$ for representations with no Jordan block at zero on surfaces. Most of the arguments are adapted from \cite{zazi}.  
\begin{prop}
\label{prop:noJD}
    Let $(\Sigma,g)$ be an orientable connected closed Anosov surface of genus $G\geq 2$ and let $\rho \in \mathrm{Hom}(\pi_1(M), \mathrm{GL}_r(\CC))$ be such that there is no Jordan block at zero for $k=0,1,2$. Then
     \begin{enumerate}
        \item $m(g,\rho)=\dim(\rho)(2G-2)$ if $\pi$ factors through $\pi_1(\Sigma)$;
        \item $m(g,\rho)=0$ if $\pi$ does not factor through $\pi_1(\Sigma)$ and is furthermore irreducible.
    \end{enumerate}
\end{prop}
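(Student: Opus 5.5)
We adapt the argument of Dyatlov--Zworski \cite{zazi}. Since there are no Jordan blocks, \eqref{eq:orderofvanish} reduces to
\[
m(g,\rho)=\dim\Res_0^{1,1}(\rho,0)-\dim\Res_0^{0,1}(\rho,0)-\dim\Res_0^{2,1}(\rho,0),
\]
so it suffices to compute the dimensions of the resonant states. Here $n=1$ and $\dim M=3$.

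\textbf{Degrees $0$ and $2$.} For $k=0$, a resonant state $u$ satisfies $Xu=0$ in the twisted sense with $\WF(u)\subset E_u^*$. The standard regularity argument via radial estimates, or the pairing with co-resonant states together with mixing and positivity, should give that $u$ is smooth and $d^{\nabla_\rho}u=0$. This identifies $\Res_0^{0,1}(\rho,0)\cong H^0(M,\rho)$. For $k=2$, use the map $u\mapsto u\,d\alpha$ (or its inverse). Since $d\alpha$ is a symplectic form on $\ker\alpha$, any $v\in\Res_0^{2,1}$ can be written $v=u\,d\alpha$ with $u\in\Res_0^{0,1}$. Hence $\dim\Res_0^{2,1}=\dim H^0(M,\rho)$.

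\textbf{Degree $1$.} Following \cite{zazi}, build the map
\[
\Res_0^{1,1}(\rho,0)\cap\ker d^{\nabla_\rho}\to H^1(M,\rho)
\]
and show it is surjective using the resolvent: given a closed smooth form $w$, write $w=w'+c\,\alpha$ with $\iota_X w'=0$. Then use $\Pi^1_\rho(0)$ and the absence of a Jordan block to produce a resonant representative. Next, analyse the kernel and the non-closed states. For $u\in\Res_0^{1,1}$, $d^{\nabla_\rho}u$ is a $2$-form with $\iota_X d^{\nabla_\rho}u=\mathcal L_{X^\rho}u=0$, so $d^{\nabla_\rho}u\in\Res_0^{2,1}\cong\Res_0^{0,1}\,d\alpha$. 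This gives an exact sequence
\[
0\to \Res_0^{1,1}\cap\ker d^{\nabla_\rho}\to\Res_0^{1,1}\xrightarrow{d^{\nabla_\rho}}\Res_0^{2,1}.
\]
As in \cite{zazi}, the element $\alpha\, u_0$ for $u_0\in\Res_0^{0,1}$ (flat sections) is not in $\Omega_0^1$. Closed resonant states exact in cohomology must vanish: $u=d^{\nabla_\rho}f$ with $Xf=0$ forces $f\in\Res^{0,1}$, hence $u=0$. The image of $d^{\nabla_\rho}$ is zero, by pairing against co-resonant states using \eqref{eq:adjointd} and the non-degeneracy \eqref{eq:nondeg}: $h_\alpha(d u,v)=\pm h_\alpha(u,d v)$ with $v$ co-resonant of degree $0$, and $d v=0$. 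Here one must also check that the class of $d\alpha\wedge$ paired with flat sections obstructs some classes in $H^1$. Precisely, the classes in $H^1(M,\rho)$ coming through $\pi_*$ from $H^0(\Sigma)$ (the $2r_1$ extra classes in Lemma \ref{lemm:coho}, counted with those pulled back from $\Sigma$) are not all represented. The expected outcome is
\[
\dim\Res_0^{1,1}=\dim H^1(M,\rho)-\dim H^0(M,\rho)\ \text{(up to the Euler-class correction)}.
\]
Combined with the above, this gives $m=\dim H^1-2\dim H^0$.

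\textbf{Conclusion.} Plugging in Lemma \ref{lemm:coho}: in the factoring case $\dim H^1-2\dim H^0=r(2G-2)+2r_1-2r_1=r(2G-2)$. In the irreducible non-factoring case everything is acyclic, so all resonant spaces vanish and $m=0$. For the second case, a cleaner route is to note directly that closed resonant states inject into $H^1=0$ and $d^{\nabla_\rho}$ maps into $\Res^{2,1}\cong\Res^{0,1}\cong H^0=0$, so all three spaces vanish.

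The main obstacle is the degree-$1$ count. It requires showing that the map to $H^1(M,\rho)$ has image of exactly codimension $r_1$ (the right count in the factoring case) and that $d^{\nabla_\rho}$ kills $\Res_0^{1,1}$. The latter requires the no-Jordan-block hypothesis to use the pairing \eqref{eq:nondeg} on honest (not generalized) states. I would first follow \cite[\S3]{zazi} line by line, replacing $d$ by $d^{\nabla_\rho}$ and the Liouville pairing by $h_\alpha$ between $\rho$ and $\rho^*$.
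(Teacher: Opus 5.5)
\textbf{Gap: resonant $0$-forms need not be flat for non-unitary twists.} Your whole computation rests on $\Res_0^{0,1}(\rho,0)\cong H^0(M,\rho)$, and dually on co-resonant $0$-forms being $d^{\nabla_{\rho^*}}$-closed. The tools you invoke for this, \cite[Lemma 2.3]{zazi} or mixing plus positivity, need $\mathcal L_{X^\rho}$ to be skew-adjoint on $L^2$, i.e.\ $\rho$ unitary. That is exactly why the paper uses them only in Proposition~\ref{prop:DZ}. The no-Jordan-block hypothesis gives no regularity, and the identification fails for non-unitary twists in general. For $\rho=\mathrm{Ad}$ on a hyperbolic surface, $H^0(M,\mathrm{Ad})=0$, yet $\dim\Res_0^{0,\infty}(\mathrm{Ad},0)=2G+1$ by Theorem~\ref{theo:3}, so $\Res_0^{0,1}(\mathrm{Ad},0)\neq 0$.

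Every later step inherits this error:
\begin{itemize}
\item ``$u=d^{\nabla_\rho}f$ with $\mathcal L_{X^\rho}f=0$ forces $u=0$'' needs $f$ to be flat.
\item ``$d^{\nabla_\rho}$ kills $\Res_0^{1,1}$'' needs co-resonant $0$-forms to be closed. In fact $\mathrm{Ran}(d^{\nabla_\rho})$ has codimension exactly $\dim\rho^{\pi_1(M)}$ in $\Res_0^{2,1}$, so it is nonzero as soon as $\Res_0^{2,1}$ is larger than $\rho^{\pi_1(M)}d\alpha$.
\item Your ``cleaner route'' in the non-factoring case would show that all three spaces vanish. That is more than the proposition asserts, and nothing you have proves it.
\end{itemize}
Your degree-$1$ bookkeeping is also inconsistent. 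The claim $\dim\Res_0^{1,1}=\dim H^1-\dim H^0$, together with $\dim\Res_0^{0,1}=\dim\Res_0^{2,1}=\dim H^0$, would give $m=\dim H^1-3\dim H^0$, not $\dim H^1-2\dim H^0$. Moreover, $\cup e$ is injective on $H^0(\Sigma,E_{\bar\rho})$, so the Gysin sequence gives $H^1(M,\rho)\cong H^1(\Sigma,E_{\bar\rho})$. No classes ``come from $H^0(\Sigma)$''.

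\textbf{What is needed.} Do not compute the three dimensions separately. Compute only the alternating sum, in which $\dim\Res_0^{0,1}$ cancels.
\begin{itemize}
\item Take $T=d^{\nabla_\rho}:\Res_0^{1,1}(\rho,0)\to\Res_0^{2,1}(\rho,0)$. Absence of Jordan blocks makes $h_\alpha:\Res_0^{k,1}\times\Res_0^{*,2-k,1}\to\CC$ nondegenerate.
\item By \eqref{eq:adjointd}, $\mathrm{Ran}(T)^\perp$ consists of the co-resonant $0$-forms $u$ with $d^{\nabla_{\rho^*}}u=0$. These are smooth by ellipticity of $d^{\nabla_{\rho^*}}$, with no unitarity needed, hence are flat sections. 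So $\mathrm{Ran}(T)$ has codimension $\dim\rho^{\pi_1(M)}$, and $\Res_0^{2,1}=\mathrm{Ran}(T)\oplus\rho^{\pi_1(M)}d\alpha$.
\item Next consider $S:\ker T\to H^1(M,\rho)$, $u\mapsto[u-d^{\nabla_\rho}v]$, built from \cite[Lemma 2.1]{zazi}. Its kernel is $d^{\nabla_\rho}(\Res_0^{0,1})$, of dimension $\dim\Res_0^{0,1}-\dim\rho^{\pi_1(M)}$.
\item $S$ is onto. For closed smooth $w$, the obstruction to solving $\mathcal L_{X^\rho}v=\iota_Xw$ is pairing with $\Res_0^{*,2,1}$. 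This pairing vanishes on $d^{\nabla_{\rho^*}}(\Res_0^{*,1,1})$, because $d^{\nabla_\rho}\iota_Xw=\mathcal L_{X^\rho}w$ pairs trivially with co-resonant states. It vanishes on $(\rho^*)^{\pi_1(M)}d\alpha$ by Stokes and $d^{\nabla_\rho}w=0$.
\end{itemize}
Adding the two rank--nullity identities gives $\dim\Res_0^{1,1}-\dim\Res_0^{0,1}-\dim\Res_0^{2,1}=\dim H^1(M,\rho)-2\dim\rho^{\pi_1(M)}$. Lemma~\ref{lemm:coho} then gives both cases.
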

\begin{proof} 

In contrast to \cite[Proposition 3.1]{zazi}, we will not compute each dimension separately. The strategy consists in considering a linear operator $T$ and computing the alternate difference \eqref{eq:orderofvanish} from the dimension of the kernel of $T.$
The operator we will consider is  
\begin{equation}
    \label{eq:T}
    T:\Res_{0}^{1,1}(\rho,0) \to \Res_{0}^{2,1}(\rho,0), \quad u\mapsto d^{\nabla_\rho} u. 
\end{equation}
Note that the map $T$ is well-defined. Indeed, let $u\in \Res_{0}^{1,1}(\rho,0)$. Since $[d^{\nabla_\rho},\mathcal L_{X^\rho}]=0,$ one has $\mathcal L_{X^\rho} T(u)=0.$ Next, we have $\mathrm{WF}(d^{\nabla_\rho} u)\subset \mathrm{WF}(u)\subset E_u^*$ by general properties of the wavefront set.
Finally, using $\mathcal{L}_{X^\rho}= d^{\nabla_\rho}\iota_X+\iota_X d^{\nabla_\rho}$, one computes
$$ \iota_X T(u)=\iota_X d^{\nabla_\rho} u=\mathcal L_{X^\rho} u- d^{\nabla_\rho}\iota_X u=0.$$
Note that in this last computation, we use the fact that $u$ is a resonant state (and not a generalized one). We show the following result.
\begin{lem}
\label{lemm:1}
  One has
    \begin{equation}
        \label{eq:surj}
        \dim \Res_{0}^{1,1}(\rho,0)= \dim \ker T +\dim \Res_{0}^{2,1}(\rho,0)-\mathrm{dim}(\rho^{\pi_1(M)}).
    \end{equation}
\end{lem}
In particular, if $\rho$ does not have any trivial factor, that is if $\rho^{\pi_1(M)}=\{0\},$  then $T$ is a surjective map.
\begin{proof}
    Since there is no  Jordan block for $k=0,1,2$, \eqref{eq:nondeg} implies the non-degeneracy of the pairing 
    $$h_\alpha: \Res_{0}^{k,1}(\rho,0)\times \Res_{0}^{*,2-k,1}(\rho,0)\to \mathbb C.$$ 
    We will denote by $\mathrm{Ran}(T)^\perp:=\{u\in \mathrm{Res}^{*,0,1}_0(\rho,0)\mid \forall v\in \mathrm{Res}^{1,1}_0(\rho,0), \ h_\alpha(T(v),u)=0\}.$
    Using \eqref{eq:adjointd}, we obtain
    \begin{equation}
    \label{eq:equi}
    \begin{split}
        u\in \mathrm{Ran}(T)^\perp\ &\iff \ \forall v\in \mathrm{Res}^{1,1}_0(\rho,0), \  h_{\alpha}(d^{\nabla_\rho} v,u)=0 
        \\& \iff \ \forall v\in \mathrm{Res}^{1,1}_0(\rho,0), \  h_{\alpha}( v,d^{\nabla_{\rho^*}} u)=0 
        \\ &\iff \ d^{\nabla_{\rho^*}}u=0.
        \end{split}
    \end{equation}
    We study the kernel of $d^{\nabla_\rho}: \Res^{*,0,1}_{0}(\rho,0)\to \mathrm{Res}_0^{*,1,1}(\rho,0)$. Let $u\in \Res^{*,0,1}_{0}(\rho,0)$ such that $d^{\nabla_{\rho^*}} u=0.$ Since $d^{\nabla_{\rho^*}}$ is elliptic (it is clear in local coordinates), one has $u\in C^{\infty}(M;\mathcal E_{\rho^*})$. This means that, by Lemma \ref{lemm:coho}, $u\in H^0(M,\rho^*)=(\rho^*)^{\pi_1(M)}$ and this space is of dimension $\mathrm{dim}((\rho^*)^{\pi_1(M)})=\mathrm{dim}(\rho^{\pi_1(M)})$. The statement of the lemma follows from the rank-nullity theorem and the fact that $\mathrm{dim}(\mathrm{Ran}(T)^\perp)=\mathrm{codim}(\mathrm{Ran}(T)).$ 
    \end{proof}
    Note that for any $(v,w)\in \rho^{\pi_1(M)}\times (\rho^*)^{\pi_1(M)}$, one has $vd\alpha \in \mathrm{Res}_0^{2,1}(\rho,0)$ and 
    $$h_\alpha(vd\alpha,w)=\int_M \langle v,w\rangle_{\mathcal E_\rho \times \mathcal E_{\rho^*}} d\alpha\wedge\alpha=\langle v,w\rangle_{\mathcal E_\rho \times \mathcal E_{\rho^*}}.  $$
    Since the pairing $\rho^{\pi_1(M)}\times (\rho^*)^{\pi_1(M)}\ni(v,w)\mapsto \langle v,w\rangle_{\mathcal E_\rho \times \mathcal E_{\rho^*}}$ is non-degenerate, we deduce from \eqref{eq:equi} that $vd\alpha\notin \mathrm{Ran}(T)$. Since $\mathrm{codim}(\mathrm{Ran}(T))=\mathrm{dim}(\rho^{\pi_1(M)})$, this implies that 
    \begin{equation}
        \label{eq:Ran(T)}
\mathrm{Res}^{2,1}_0(\rho,0)=\mathrm{Ran}(T)\oplus \rho^{\pi_1(M)}d\alpha.
    \end{equation}
    We will need this decomposition of the resonant states in the next lemma.
\begin{lem}
\label{lemm:surj}
    One has
    \begin{equation}
        \label{eq:eq2}
       \dim \ker T =\dim H^1(M,\rho)+\dim \Res^{0,1}_0(\rho,0)-\mathrm{dim}(\rho^{\pi_1(M)}). 
    \end{equation}
\end{lem}
\begin{proof}
    Let $u\in \ker T$. Then applying \cite[Lemma 2.1]{zazi}, there is $v\in \mathcal D'_{E_u^*}(M;\mathcal{E}_\rho)\footnote{$\mathcal D'_{E_u^*}(M;\mathcal{E}_\rho)$ denotes the set of $u\in \mathcal D'(M;\mathcal{E}_\rho)$ such that $\mathrm{WF}(u)\subset E_u^*.$}$ such that
    $$u-d^{\nabla_\rho} v\in C^{\infty}(M,\mathscr{E}_\rho^1),\quad d^{\nabla_\rho} ( u-d^{\nabla_\rho} v)=0.$$
    In particular, one can define the mapping
    \begin{equation}
        \label{eq:H1}
        S:\ker T\to H^1(M,\rho),\quad u\mapsto [u-d^{\nabla_\rho} v]_{H^1(M,\rho)}.
    \end{equation}
    The kernel of $S$ is given by $d^{\nabla_\rho} (\Res_0^{0,1}(\rho,0)).$ Indeed, the inclusion $d^{\nabla_\rho} (\Res_0^{0,1}(\rho,0))\subset \ker (S)$ is clear. Conversely, suppose that $S(u)=0$, i.e., $u-d^{\nabla_\rho} v$ is exact. By changing $v$ if necessary, we can suppose that $u-d^{\nabla_\rho} v=0.$ Applying $\iota_X$ gives $\mathcal L_{X^\rho} v=0.$ Since $v\in \mathcal D'_{E_u^*}(M,\mathcal{E}_\rho),$ this shows that $u\in d^{\nabla_\rho} (\Res_0^{0,1}(\rho,0)).$

    We now show that $S$ is surjective. Let $w\in C^{\infty}(M; \mathscr E^1_{\rho})$ and $d^{\nabla_\rho}w=0$. We need to find $v\in \mathcal D'_{E_u^*}(M;\mathcal E_{\rho})$ such that $w-d^{\nabla_\rho} v\in \Res_0^{1,1}(\rho,0).$ This is equivalent to $\iota_X(w-d^{\nabla_\rho} v)=0$ and thus to $ \mathcal L_{X^\rho}v=\iota_X w.$ This equation is solvable if one has
\begin{equation*}
   \forall y\in \Res_{0}^{*,2,1}(\rho,0),\quad h_{\alpha}(\iota_Xw,y)=\int_M \iota_X w \cdot  y \wedge \alpha=0.
\end{equation*}
We use \eqref{eq:Ran(T)} and first consider $y\in \mathrm{Ran}(T)$\footnote{More precisely, we apply the previous results to the adjoint representation $\rho^*.$}. That is, we consider $y=d^{\nabla_{\rho^*}} z$ for some $z\in \Res_0^{*,1,1}(\rho,0)$. Moreover, we have $d^{\nabla_\rho}\iota_X w= \mathcal{L}_{X^\rho} w$. In particular, using \eqref{eq:adjointd} gives
$$ h_{\alpha}(\iota_Xw,y)=h_{\alpha}(\iota_Xw,d^{\nabla_{\rho^*}} z)=-h_{\alpha}(\mathcal L_{X^\rho}w,z)=0.$$
Next, we consider $y\in (\rho^*)^{\pi_1(M)}d\alpha.$ Write $y=vd\alpha$ with $v\in (\rho^*)^{\pi_1(M)}$. We compute
\begin{align*}h_\alpha(\iota_X w,vd\alpha)&=\int_M \langle w(X),v\rangle_{\mathcal E_\rho \times \mathcal E_{\rho^*}} d\alpha\wedge \alpha=\int_M d\alpha \wedge\langle w(\cdot),v\rangle_{\mathcal E_\rho \times \mathcal E_{\rho^*}}
\\&=\int_M \alpha \wedge \langle d^{\nabla_\rho}w(\cdot,\cdot), v\rangle_{\mathcal E_\rho \times \mathcal E_{\rho^*}}=0. 
\end{align*}
This shows that $S$ is surjective.
To conclude the proof of the Lemma, we apply the rank-nullity theorem to the map $S$ and to $d^{\nabla_\rho}:\Res_0^{0,1}(\rho,0)\to \Res_0^{1,1}(\rho,0)$ to obtain
\begin{equation*}
\begin{split}
\dim \ker T &=\dim H^1(M,\rho)+\mathrm{dim}(d^{\nabla_\rho} (\Res_0^{0,1}(\rho,0)))
\\&=\dim H^1(M,\rho)+\mathrm{dim}(\Res_0^{0,1}(\rho,0))-\mathrm{dim}(\mathrm{ker}(d^{\nabla_\rho}))
\\&=\dim H^1(M,\rho)+\mathrm{dim}(\Res_0^{0,1}(\rho,0))-\mathrm{dim}(\rho^{\pi_1(M)}).\qedhere
\end{split}
\end{equation*}
\end{proof}
Combining \eqref{eq:surj} and \eqref{eq:eq2}, we deduce
\begin{equation*}
    \dim \Res_{0}^{1,1}(\rho,0)-\dim \Res_{0}^{0,1}(\rho,0)-\dim \Res_{0}^{2,1}(\rho,0)=\dim H^1(M,\rho)-2\mathrm{dim}(\rho^{\pi_1(M)}).
\end{equation*}
From Lemma \ref{lemm:coho}, we deduce that $m(g,\rho)=\dim(\rho)(2G-2)$ in Case \ref{case1} and $m(g,\rho)=0$ in Case \ref{case2}.
This concludes the proof of the proposition.
\end{proof}    

\begin{rem}
    Below is a slightly different argument for Proposition \ref{prop:noJD}, following an (unpublished) note of Dyatlov and Zworski (see also Dang--Rivi\`ere \cite{DR}). Suppose, as before, that there are no Jordan blocks, i.e.,
    $\Res_0^{k,\infty}(\rho,0)=\Res_0^{k,1}(\rho,0)$ for $k=0,1,2$.
    We can define a complex by
    \begin{equation*}
        d^{\nabla_\rho}:\mathcal{C}_0^k\to \mathcal{C}_0^{k+1},\quad \mathcal{C}_0^k=\Res_0^{k,1}(\rho,0).
    \end{equation*}
    We can also consider the following complex
    \begin{equation}
    \label{eq:complex}
        d^{\nabla_\rho}: \mathcal{C}^k\to \mathcal{C}^{k+1},\quad \mathcal{C}^k:=\{u\in \mathcal{D}_{E_u^*}'(M,\mathscr {E}_{\rho}^k):\mathcal{L}_{X^{\rho}} u =0\}, \quad k=0,1,2,3.
    \end{equation}
    We claim this complex is homotopic equivalent to the de Rham complex $\mathcal{D}_{E_u^*}'(M, \mathscr{E}_{\rho}^k)$.
    The chain maps between them are the inclusion $J_k:\mathcal C^k \hookrightarrow \mathcal D'_{E_u^*}(M,\mathscr E_\rho^k)$ and the spectral projection $\Pi_k$ at $0$. Recall that one has the following Laurent expansion near $z=0$ (we are using the semisimplicity here):
    \begin{equation*}
        (\mathcal{L}_{X^\rho}-z)^{-1}=H_k(z)-\frac{\Pi_k}{z}: \mathcal{D}_{E_u^*}'(M,\mathscr{E}_{\rho}^k)\to \mathcal{D}_{E_u^*}'(M,\mathscr{E}_{\rho}^k),
    \end{equation*}
    where $H_k(z)$ is holomorphic in $z$. Moreover, it is direct to check that
    \begin{equation*}
        \id_k-J_k\Pi_k=\mathcal{L}_{X^\rho}H_k(0)=\iota_X H_k(0)d^{\nabla_\rho}+d^{\nabla_\rho}\iota_X H_k(0).
    \end{equation*}
This gives an explicit homotopy equivalence between $\mathcal{C}^{\bullet}$ and the de Rham complex $\mathcal D'_{E_u^*}(M,\mathscr E_\rho^k)$.
For $k=0,1,2,3$, we have the short exact sequence:
\begin{equation*}
    0\to \mathcal{C}_0^k\to \mathcal{C}^k\to \mathcal{C}_0^{k-1}\to 0
\end{equation*}
where the second map is $\pi_k=(-1)^k\iota_X$. Let $\mathcal{H}^k$ be the $k$-th cohomology group of $\mathcal{C}$ and $\mathcal{H}_0^k$ be the $k$-th cohomology group of $\mathcal{C}_0$. We get the following long exact sequence:
\begin{equation*}
    \mathcal{H}_0^{k-2}\to\mathcal{H}_0^k\to \mathcal{H}^k\to \mathcal{H}_0^{k-1}\to \mathcal{H}_0^{k+1}\to \mathcal{H}^{k+1}\to\mathcal{H}_0^k\to \mathcal{H}_0^{k+2}.
\end{equation*}
For $k=0,$ this becomes
$$ 0\to\mathcal{H}_0^0\to \mathcal{H}^0\to 0\to \mathcal{H}_0^{1}\to \mathcal{H}^{1}\to\mathcal{H}_0^0\to \mathcal{H}_0^{2}$$
where the last map is $[u]\mapsto [ud\alpha]$.
The first part of the exact sequence implies
\begin{equation}\label{eq:exact}
    \mathcal{H}_0^0\cong \mathcal{H}^0,\quad \mathcal{H}_0^1\cong \ker(\pi_1^*: \mathcal{H}^1\to \mathcal{H}_0^0).
\end{equation}
From the rank-nullity theorem and the fact that the complex from \eqref{eq:complex} is homotopic equivalent to the de Rham complex, we obtain
\begin{align*}
    \dim \mathcal{H}_0^1&=\dim \ker(\pi_1^*: \mathcal{H}^1\to \mathcal{H}_0^0)=\dim H^1(M, \rho)-\dim \mathrm{Ran}(\pi_1^*: \mathcal{H}^1\to \mathcal{H}_0^0).
\end{align*}
From \eqref{eq:orderofvanish}, we obtain
\begin{equation*}
    m(g,\rho)=\dim\mathcal{C}_0^1-\dim \mathcal{C}_0^0-\dim \mathcal{C}_0^2= \dim \mathcal{H}_0^1-\dim \mathcal{H}_0^0 -\dim \mathcal{H}_0^2, 
\end{equation*}
where we use that the Euler characteristic of the complex $\mathcal{C}^{\bullet}$ is the same as its cohomology.

If we can show the map $[u]\mapsto [ud\alpha]:\mathcal{H}_0^0\to \mathcal{H}_0^2$ is bijective, then the exact sequence \eqref{eq:exact} gives $\mathrm{Ran}(\pi_1^*: \mathcal{H}^1\to \mathcal{H}_0^0)=0$,
and hence
\begin{equation*}
    m(g,\rho)=\mathcal{H}_0^1-2\dim \mathcal{H}_0^0 =\dim H^1(M,\rho)-2\dim H^0(M,\rho).
\end{equation*}
But the bijectivity follows from \eqref{eq:Ran(T)}.
\end{rem}

We record for later use the following consequence of the proof of Lemma \ref{lemm:surj}.
\begin{lem}
    \label{lemm:3}
     Let $(\Sigma,g)$ be an orientable connected closed Anosov surface of genus $G\geq 2$ with a  representation $\rho \in \mathrm{Hom}(\pi_1(M), \mathrm{GL}_r(\CC)).$  
    Assume that $\mathrm{Res}_0^{0,1}(\rho,0)=0$ and $\mathrm{Res}_0^{2,1}(\rho,0)=0$. Then
    $$m(g,\rho) \geq\mathrm{dim}(H^1(M,\rho)). $$
    Moreover, one has $m(g,\rho)=\mathrm{dim}(H^1(M,\rho))$ if and only if there is no Jordan block at zero for $k=1.$ Finally, the set of $\rho \in \mathcal V$ for which
    \begin{equation}
    \label{eq:fff}
          \mathrm{dim}(\Res_0^{k,\infty}(\rho,0))=0, \ k=0,2, \quad \mathrm{dim}(\Res_0^{1,\infty}(\rho,0))=\mathrm{dim}(\Res_0^{1,1}(\rho,0))=\dim(H^1(M,\rho))
         \end{equation}    
    is an open subset of the set $\mathcal V$ defined in Proposition \ref{prop:connected2}.
\end{lem}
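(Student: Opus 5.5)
Under the hypotheses $\mathrm{Res}_0^{0,1}(\rho,0)=0$ and $\mathrm{Res}_0^{2,1}(\rho,0)=0$ we first show there are no generalized resonant states in degrees $0$ and $2$. If $u\in\Res_0^{k,\infty}(\rho,0)$ is nonzero, let $\ell$ be minimal with $\mathcal L_{X^\rho}^\ell u=0$. Then $\mathcal L_{X^\rho}^{\ell-1}u$ is a nonzero resonant state, which is a contradiction. Hence $\Res_0^{k,\infty}(\rho,0)=0$ for $k=0,2$, and \eqref{eq:orderofvanish} reduces to $m(g,\rho)=\dim\Res_0^{1,\infty}(\rho,0)$.

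It therefore suffices to show $\dim\Res_0^{1,1}(\rho,0)=\dim H^1(M,\rho)$. Since $\Res_0^{1,1}\subset\Res_0^{1,\infty}$, this gives $m\ge \dim H^1$, with equality if and only if $\Res_0^{1,\infty}=\Res_0^{1,1}$, i.e.\ there is no Jordan block in degree $1$.

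\textbf{Step 1: $T$ vanishes.} The operator $T=d^{\nabla_\rho}$ of \eqref{eq:T} maps $\Res_0^{1,1}(\rho,0)$ into $\Res_0^{2,1}(\rho,0)=0$. This well-definedness was checked without any Jordan block hypothesis, so $\ker T=\Res_0^{1,1}(\rho,0)$.

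\textbf{Step 2: the map $S$.} We rerun the argument for the map $S$ of \eqref{eq:H1}. Injectivity modulo $d^{\nabla_\rho}(\Res_0^{0,1})=0$ used only \cite[Lemma 2.1]{zazi} and the wavefront argument, so $S$ is injective.

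For surjectivity, we must solve $\mathcal L_{X^\rho}v=\iota_Xw$ with $v\in\mathcal D'_{E_u^*}$. Since $0\notin\mathrm{Res}^0(\rho)$, the resolvent $R^\rho_0(\lambda)$ is holomorphic at $0$, and $v=-R_0^\rho(0)\iota_Xw$ works with no orthogonality condition. This is the point that must be checked carefully: the earlier proof used the non-degenerate pairing, which relied on the absence of Jordan blocks, but here that is unnecessary. Thus $\dim\Res_0^{1,1}=\dim\ker T=\dim H^1(M,\rho)$, proving the inequality and the equivalence.

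\textbf{Step 3: openness.} Fix $\rho_0\in\mathcal V$ satisfying \eqref{eq:fff}. By Proposition~\ref{prop:pert}, for $\rho$ near $\rho_0$ the total rank of the spectral projectors near $0$ is constant. In degrees $0,2$ this rank is $0$, so there are no resonances near $0$, and in particular $\Res_0^{k,\infty}(\rho,0)=0$ for $k=0,2$. The first part then gives $\dim\Res_0^{1,1}(\rho,0)=\dim H^1(M,\rho)\le\dim\Res_0^{1,\infty}(\rho,0)\le \dim\Res_0^{1,\infty}(\rho_0,0)$, where the last inequality holds because the rank of the degree-$1$ projector near $0$ is constant.

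It remains to know that $\dim H^1(M,\rho)$ is locally constant on $\mathcal V$. On each component $\mathcal V_j$ this follows from Lemma~\ref{lemm:coho}: for $j$ not giving the trivial root of unity, $\rho$ is acyclic and $\dim H^1=0$; otherwise $\rho$ is irreducible and factors through $\pi_1(\Sigma)$, so $r_1=0$ for $r\ge2$ (or $r_1\in\{0,1\}$, locally constant, when $r=1$), giving $\dim H^1=r(2G-2)+2r_1$.

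The chain of inequalities is therefore squeezed between equal ends, so $\rho$ satisfies \eqref{eq:fff} and the set is open. The main obstacle is Step 2, namely justifying solvability via holomorphy of the degree-$0$ resolvent at $0$ on the correct anisotropic space, with $v$ satisfying the required wavefront condition.
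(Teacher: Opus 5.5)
Your proposal is correct and follows the paper's proof essentially step for step. The key steps are that $T=0$, that $S$ is injective because $\Res_0^{0,1}(\rho,0)=0$ and surjective because $0$ is not a resonance on $0$-forms (your holomorphic-resolvent argument is exactly what the paper means by ``no obstructions''), and that openness follows from upper semicontinuity of the spectral projector ranks together with local constancy of $\dim H^1$ near $\rho_0$.

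One small correction: when $r=1$, $r_1$ is \emph{not} locally constant at the trivial character, since nearby nontrivial characters of $\pi_1(\Sigma)$ have $r_1=0$. This does not affect your argument, because any $\rho_0$ satisfying \eqref{eq:fff} has $H^0(M,\rho_0)\subset\Res_0^{0,1}(\rho_0,0)=0$. Such a $\rho_0$ is therefore nontrivial, and $r_1=0$ on a neighbourhood of it.
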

\begin{proof}
    Since $\mathrm{Res}_0^{2,\infty}(\rho, 0)=0,$ the mapping
    \begin{equation*}
        T:\Res_{0}^{1,1}(\rho,0) \to \Res_{0}^{2,1}(\rho,0), \quad u\mapsto d^{\nabla_\rho} u
    \end{equation*}
    from \eqref{eq:T} is equal to $0.$ In particular, one can define the mapping $S$ from \eqref{eq:H1} for any $u\in \mathrm{Res}_0^{1,1}(\rho,0).$ The first part of the proof of Lemma \ref{lemm:surj} shows that $S$ is injective since $\mathrm{Res}_0^{0,1}(\rho,0)=0.$ The second part of the proof still shows that $S$ is surjective since there are no obstructions to solving $ \mathcal L_{X^\rho}(v)=\iota_X w.$ In particular, using \eqref{eq:orderofvan} and Lemma \ref{lemm:coho} gives
    \begin{equation}
    \label{eq:lowerbnd}
        m(g,\rho)=\mathrm{dim}(\mathrm{Res}^{1,\infty}_0(\rho,0))\geq \mathrm{dim}(\mathrm{Res}^{1,1}_0(\rho,0))=\mathrm{dim}(H^1(M,\rho)), 
        \end{equation}
    with equality if and only if $\mathrm{Res}_0^{1,1}(\rho,0)=\mathrm{Res}_0^{1,\infty}(\rho,0).$ For the last claim, notice that by Proposition \ref{prop:pert} the mapping
    $\rho\mapsto {\rm dim}(\mathrm{Res}_0^{k,\infty}(\rho,0))=\mathrm{dim}\big(\mathrm{ker}\big(\mathcal L_{X^\rho}\widetilde{\Pi^k_\rho}(\lambda_0)\big)\big) $ for any $k=0,1,2$ is upper semi-continuous. In particular, the conditions ${\rm dim}(\mathrm{Res}_0^{k,\infty}(\rho,0))=0$ for $k=0,2$ are open. Let $\rho_0\in \mathcal V$ be a representation satisfying \eqref{eq:fff}. For a close representation $\rho\in \mathcal V$, we have
    \begin{align*}
        \mathrm{dim}(H^1(M,\rho_0))&=\mathrm{dim}(\mathrm{Res}^{1,\infty}_0(\rho_0,0))\geq \mathrm{dim}(\mathrm{Res}^{1,\infty}_0(\rho,0))
        \\&\geq \mathrm{dim}(\mathrm{Res}^{1,1}_0(\rho,0))=\mathrm{dim}(H^1(M,\rho)).\end{align*}
    By Lemma \ref{lemm:coho}, one has $\mathrm{dim}(H^1(M,\rho_0))=\mathrm{dim}(H^1(M,\rho))$ for any close $\rho$ in $\mathcal V.$
    This means that the inequalities are in fact equalities and concludes the proof.
\end{proof}
To conclude this section, we compute the dimensions of the spaces of the generalized resonant states at zero for a unitary representation using Proposition \ref{prop:noJD} and \cite[Lemma 2.3]{zazi}. We note that in the case where $\rho$ factors through $\pi_1(\Sigma)$, this was obtained by Cekić and Paternain in \cite[Corollary 1.9]{CP20}. We provide a proof for a general representation $\rho$ of $\pi_1(M)$ for completeness. 
\begin{prop}
\label{prop:DZ}
    Let $(\Sigma, g)$ be an orientable connected closed Anosov surface and let $\rho$ be a unitary representation of $\pi_1(M)$ such that
    \begin{enumerate}
        \item either $\pi$ factors through $\pi_1(\Sigma)$ with no trivial factors, i.e., $\rho^{\pi_1(M)}=0$;
        \item or $\pi$ does not factor through $\pi_1(\Sigma)$ and is irreducible.
    \end{enumerate}Then
    $$\mathrm{dim}(\Res_0^{k,\infty}(\rho,0))=0, \ k=0,2, \quad \mathrm{dim}(\Res_0^{1,\infty}(\rho,0))=\mathrm{dim}(\Res_0^{1,1}(\rho,0))=\dim(H^1(M,\rho)). $$
\end{prop}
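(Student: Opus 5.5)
The plan is to follow the Dyatlov--Zworski argument \cite[Proposition 3.1, Lemma 2.3]{zazi}, using unitarity to produce a positive $L^2$ structure on $\mathcal E_\rho$. If $\rho$ is unitary, the standard Hermitian product on $\CC^r$ is $\rho$-invariant and descends to a flat Hermitian metric $\langle\cdot,\cdot\rangle$ on $\mathcal E_\rho$, parallel for $\nabla_\rho$. With this metric, every pointwise computation of \cite{zazi} for scalar forms carries over verbatim to $\mathcal E_\rho$-valued forms. In particular, $\mathcal L_{X^\rho}$ is skew-adjoint on $L^2$ with respect to the Liouville measure $\alpha\wedge d\alpha$.

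\textbf{Step 1: $k=0$.} Let $u\in\Res_0^{0,1}(\rho,0)$, so $\mathcal L_{X^\rho}u=0$ and $\WF(u)\subset E_u^*$. The twisted version of \cite[Lemma 2.3]{zazi} should go through with the flat metric. Its key input is that $|u|^2$ (or the pairing $\langle u,\bar u\rangle$, well defined by the wavefront condition) is flow invariant. Combined with the radial estimates, this shows $u$ is smooth and that $\nabla_\rho u$ vanishes in the $E_u$ and $E_s$ directions. Hence $d^{\nabla_\rho}u=0$, so $u\in H^0(M,\rho)=\rho^{\pi_1(M)}$. In case (1) this space is $0$ by assumption, and in case (2) it is $0$ by Lemma \ref{lemm:coho}. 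For Jordan blocks, I will use the usual argument. If $\mathcal L_{X^\rho}v=u$ with $u$ a resonant state, then pairing with the co-resonant state $\bar u$, which is available since $\rho^*\cong\bar\rho$ for unitary $\rho$, gives $\|u\|^2=\langle \mathcal L_{X^\rho}v,u\rangle=-\langle v,\mathcal L_{X^\rho}u\rangle=0$. The pairing between $\WF\subset E_u^*$ and $\WF\subset E_s^*$ distributions is justified exactly as in \cite{zazi}. This gives $\Res_0^{0,\infty}(\rho,0)=0$.

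\textbf{Step 2: $k=2$.} On a surface, $\mathscr E^2_{\rho,0}$ is spanned by $d\alpha$ tensored with $\mathcal E_\rho$. Since $\mathcal L_X d\alpha=0$, the map $f\mapsto f\,d\alpha$ intertwines the $k=0$ and $k=2$ problems, including wavefront conditions. Step 1 therefore gives $\Res_0^{2,\infty}(\rho,0)=0$.

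\textbf{Step 3: $k=1$.} With Steps 1--2, Lemma \ref{lemm:3} applies. It gives $\dim\Res_0^{1,1}(\rho,0)=\dim H^1(M,\rho)$, so it only remains to rule out a Jordan block for $k=1$. Suppose $u\in\Res_0^{1,1}$ and $\mathcal L_{X^\rho}v=u$ with $v\in\mathcal D'_{E_u^*}$, $\iota_Xv=0$. Since $\Res_0^{2,\infty}=0$, we have $d^{\nabla_\rho}u=0$; the same holds for co-resonant states. I would imitate \cite[Lemma 2.3]{zazi} and compute $h_\alpha(u,\bar u')$-type quantities. The identity $h_\alpha(\mathcal L_{X^\rho}v,w)=-h_\alpha(v,\mathcal L_{X^{\rho^*}}w)=0$ for co-resonant $w$ shows $u$ is orthogonal to all co-resonant states. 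Non-degeneracy \eqref{eq:nondeg} then forces $u=0$ once we know the pairing restricted to $\Res_0^{1,1}\times\Res_0^{*,1,1}$ is non-degenerate. I will get this from the surjective map $S$ to $H^1$ and Poincaré duality: $h_\alpha(u,w)$ equals the cup-product pairing $\int_M[u]\wedge[w]\wedge\alpha$ up to correction terms, which vanish by $d^{\nabla}$-closedness. One then argues, as in \cite{zazi}, that the relevant form is definite, using unitarity to pair $u$ with $\bar u\wedge d\alpha$-type expressions, or equivalently the positivity $\int \langle u\wedge\star\bar u\rangle$ on $E_u^*$ resonant states.

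\textbf{Main obstacle.} This last step is the hardest: it is where unitarity enters in an essential way, through the positivity of $\int_M \langle u,\bar u\rangle\wedge\dots$ for $d$-closed resonant $1$-forms. I would first try to transcribe \cite[Lemma 2.3 and the proof of Proposition 3.1]{zazi} line by line, replacing $\bar u$ with the conjugate section in $\mathcal E_{\bar\rho}=\mathcal E_{\rho^*}$. Afterwards, I would check that the smoothness bootstrap on $E_s$ components and the $\iota_X d$-vanishing trick still work, since the connection is flat and metric.
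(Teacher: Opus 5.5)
Your Steps 1 and 2 are essentially the paper's argument, with two corrections. First, $\langle u,\bar u\rangle$ is \emph{not} made meaningful by the wavefront condition, since $\WF(\bar u)\subset -E_u^*=E_u^*$. What \cite[Lemma 2.3]{zazi} actually uses is that $\mathcal L_{X^\rho}u$ is smooth, so that $\langle \mathcal L_{X^\rho}u,u\rangle$ is defined (here it is $0$). Second, your separate Jordan-block argument is unnecessary, because $\Res_0^{0,1}(\rho,0)=0$ already forces $\Res_0^{0,\infty}(\rho,0)=0$.

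\textbf{The gap is Step 3.} Reducing to non-degeneracy of $h_\alpha$ on $\Res_0^{1,1}(\rho,0)\times\Res_0^{*,1,1}(\rho,0)$ only reformulates the claim. Under \eqref{eq:nondeg}, the co-resonant states form the annihilator of $\mathcal L_{X^\rho}\bigl(\Res_0^{1,\infty}(\rho,0)\bigr)$. So the restricted pairing is non-degenerate exactly when $\ker\mathcal L_{X^\rho}\cap\mathrm{Ran}\,\mathcal L_{X^\rho}=0$ on $\Res_0^{1,\infty}(\rho,0)$, i.e.\ when there is no Jordan block. Everything therefore rests on your proof of non-degeneracy, and neither proposed tool works.

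First, $\int_M u\wedge w\wedge\alpha$ is not cohomological, because $d\alpha\neq 0$. Writing $u=u_0+d^{\nabla_\rho}f$ and $w=w_0+d^{\nabla_{\rho^*}}g$, Stokes produces terms like $\int_M f\,(\iota_Xw_0)\,\alpha\wedge d\alpha$. These pair the distribution $f$ with the smooth function $\iota_Xw_0=-\mathcal L_{X^{\rho^*}}g$, and closedness does not kill them. Worse, in case (1) the Gysin sequence lets you take $u_0,w_0$ pulled back from $\Sigma$. Then $u_0\wedge w_0\wedge\alpha\equiv 0$ pointwise, since $\alpha$ annihilates the circle fibres. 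Your identification would then make $h_\alpha$ vanish identically on resonant states, i.e.\ put every resonant $1$-form in the range of $\mathcal L_{X^\rho}$, the opposite of what you want. (Case (2) is trivial: $H^1(M,\rho)=0$ and Lemma~\ref{lemm:3} give $\Res_0^{1,1}(\rho,0)=0$.)

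Second, the positivity route fails for the reason above. $\bar u$ has wavefront set in $E_u^*$, not $E_s^*$, so it is not a co-resonant state and $u\wedge\bar u$ is undefined. Resonant $1$-forms at $0$ are genuinely non-smooth.

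\textbf{The missing idea.} The paper, following \cite[Lemma 3.5]{zazi}, works with the Jordan chain itself, so that the only pairing needed has a smooth factor.
Let $u'\in\Res_0^{1,2}(\rho,0)$ and $v:=\mathcal L_{X^\rho}u'\in\Res_0^{1,1}(\rho,0)$.
Write $\alpha\wedge d^{\nabla_\rho}u'=a\,\alpha\wedge d\alpha$. Commuting $\mathcal L_{X^\rho}$ with $d^{\nabla_\rho}$ and using $d^{\nabla_\rho}v\in\Res_0^{2,1}(\rho,0)=0$ gives $a\in\Res_0^{0,1}(\rho,0)=0$. Hence $d^{\nabla_\rho}u'=\alpha\wedge v$.
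By \cite[Lemma 2.1]{zazi}, $v=w+d^{\nabla_\rho}\varphi$ with $w$ smooth and closed and $\varphi\in\mathcal D'_{E_u^*}$. In particular $\mathcal L_{X^\rho}\varphi=-\iota_Xw$ is smooth.
Now pair with the smooth form $\bar w$ via the flat Hermitian metric. Stokes gives $0=\int_M d^{\nabla_\rho}u'\wedge\bar w$. Since $\mathrm{Re}\int_M\alpha\wedge w\wedge\bar w=0$, taking real parts yields $\mathrm{Re}\langle\mathcal L_{X^\rho}\varphi,\varphi\rangle=0$. This quantity is well defined because $\mathcal L_{X^\rho}\varphi$ is smooth.
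That vanishing is exactly the input of \cite[Lemma 2.3]{zazi}, and it is where unitarity really enters. It gives $\varphi$ smooth, hence $v$ smooth. A smooth $v$ with $\iota_Xv=0=\mathcal L_{X^\rho}v$ vanishes by the contraction argument of Step 1, and Lemma~\ref{lemm:3} then concludes.
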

\begin{proof}
   Suppose that $k=0$ to start. Since $\rho$ is unitary, $\mathcal L_{X^\rho}$ acting on  $L^2(M, \mathcal E_{\rho,0}^0)$ is skew-adjoint. In particular, we can use \cite[Lemma 2.3]{zazi} to deduce that any $u\in \mathrm{Res}^{0,1}_0(\rho,0)$ is smooth.
 
 For any $x\in M$ and $v\in T_xM\otimes \mathcal{E}_{\rho}$, since $e^{t\mathcal{L}_X}u=u$, we have
   \begin{equation*}
       \langle d^{\nabla_\rho}u(x), v\rangle = \langle d^{\nabla_\rho}u(e^{t X}x), e^{t\mathcal{L}_{X^\rho}}v\rangle.
   \end{equation*}
   In particular, if $v\in E_s(x)\otimes \mathcal E_\rho$, since $d^{\nabla_\rho}u(e^{t X}x)$ is bounded, 
   this gives $\langle d^{\nabla_\rho}u(e^{t X}x), e^{t \mathcal L_{X^\rho}}v\rangle\to 0$ when $t\to+\infty$. We have a similar argument for the unstable bundle and this implies that $d^{\nabla_\rho}u(x)=\varphi \alpha$ for some $\varphi\in C^{\infty}(M;\mathcal{E}_{\rho})$. Since $$0=\alpha \wedge \underbrace{d^{\nabla_\rho}(\varphi\alpha)}_{d^{\nabla_\rho} d^{\nabla_\rho} u=0}=\varphi \alpha \wedge d\alpha,$$ this implies $\varphi=0$ and thus $d^{\nabla_\rho}u=0$. Lemma \ref{lemm:coho} implies $H^0(M,\rho)=0$ which gives $u\equiv 0.$ This also implies that $\mathrm{Res}^{2,1}_0(\rho,0)=0$ since $u\mapsto u d\alpha$ is an isomorphism from $\mathrm{Res}^{0,1}_0(\rho,0)$ to $\mathrm{Res}^{2,1}_0(\rho,0).$
   
   We now let $k=1$ and show that there are no Jordan block at zero, which will finish the proof of the proposition using Lemma \ref{lemm:3}. Suppose that $u \in \mathrm{Res}^{1,2}_0(\rho,0)$, that is $u\in \mathcal D'_{E_u^*}(M,\mathscr E^1_{\rho,0}) $ satisfying $\iota_X u=0$ and $\mathcal L_{X^\rho}u=\iota_Xd^{\nabla_\rho}u=:v\in \mathrm{Res}_0^{1,1}(\rho,0).$ We first notice that $\alpha \wedge d^{\nabla_\rho}u=a( \alpha\wedge d\alpha)$ for some $a\in \mathcal D'_{E_u^*}(M;\mathcal E_\rho)$. But since $[\mathcal L_{X^\rho},d^{\nabla_\rho}]=0$, we obtain
   \begin{align*}\mathcal L_{X^{\rho}}(a)(\alpha\wedge d\alpha)=\mathcal L_{X^\rho}(\alpha \wedge d^{\nabla_\rho}u)=\underbrace{\mathcal L_{X}\alpha}_{=0} \wedge d^{\nabla_\rho}u+ \alpha \wedge \mathcal L_{X^\rho}d^{\nabla_\rho}u =\alpha \wedge d^{\nabla_\rho}v=0,
   \end{align*}
     where we used that $d^{\nabla_\rho}v\in \mathrm{Res}^{2,1}_0(\rho, 0)=0$. In particular, this gives $a\in \mathrm{Res}^{0,1}_0(\rho, 0)=0$ and thus $\alpha \wedge d^{\nabla_\rho}u=0.$ We deduce that $d^{\nabla_\rho}u=\alpha \wedge v$ and from \cite[Lemma 2.1]{zazi}, there is
    $$\varphi\in \mathcal D'_{E_u^*}(M,\mathcal{E}_\rho), \ w \in C^{\infty}(M; \mathscr E^1_{\rho,0}),\quad v=w+d^{\nabla_\rho} \varphi,\quad d^{\nabla_\rho}w=0.$$
    Since $\iota_X v=0$, we obtain $\mathcal L_{X^\rho}(\varphi)=-\iota_X w$ and thus
    \begin{align*}
        0&=\int_M d^{\nabla_\rho} u\wedge \bar{w} =\mathrm{Re}\int_M d^{\nabla_\rho} u\wedge \bar{w}=\mathrm{Re}\int_M \alpha \wedge v\wedge \bar{w} =\mathrm{Re}\int_M \alpha \wedge d^{\nabla_{\rho}}\varphi\wedge \bar{w}
        \\&=\mathrm{Re}\int_M \varphi \bar{w}\wedge d\alpha=-\mathrm{Re}\langle \mathcal L_{X^\rho}(\varphi), \varphi\rangle.
    \end{align*}
    We can now finish the proof as in \cite[Lemma 3.5]{zazi} and use \cite[Lemma 2.3]{zazi} to conclude that $\varphi \in C^{\infty}(M;\mathcal E_{\rho})$ and thus $v \in C^{\infty}(M;\mathscr E^1_{\rho,0}).$ This readily implies $v=0$ by the same argument as for $k=0$. In other words, we showed that $\mathcal L_{X^\rho}u=0$ for any generalized resonant states at zero, i.e., that there is no Jordan block at zero for $k=1$ and this concludes the proof.
\end{proof}
\section{Proof or Theorem \ref{theo2}}
\label{sec:proofMaintheo}

In this section, we prove Theorem \ref{theo2}.
\begin{proof}
    We fix $j=1,\ldots, r(2G-2)$ and $\rho,\rho_0 \in \mathcal{V}_j\subseteq \mathrm{Hom}(\pi_1(M), \mathrm{GL}_r(\CC))$ with $\rho_0$ unitary and nontrivial. From Proposition \ref{prop:connected2}, we can consider an analytic path $\rho:[0,1]\to \mathcal{V}_j $ such that $\rho(0)=\rho_0$ and $\rho(1)=\rho.$
  Fix $t\in [0,1]$ and apply Proposition \ref{prop:pert} to $\rho(t)$. There is a neighborhood $\mathcal U_t$ of $\rho(t)$ such that the mapping
      $$\mathcal U_t\ni \tau \mapsto M_t^0(\tau):= \widetilde{\Pi_{\tau}^0}(0)  \widetilde{\mathcal L}_{X^\tau}\widetilde{\Pi_{\tau}^0}(0),   $$
      is an analytic family of finite rank operators. As a consequence, we deduce that the set $\{\tau \in \mathcal U_t \mid \mathrm{det}(M_t^0(\tau))=0\}$ is either equal to $\mathcal U_t$ or has complex codimension $\geq 1$. Remark that $\mathrm{det}(M_t^0(\tau))=0$ if and only if $0\in \mathrm{Res}^0(\tau).$
  Let
  $$I:=\{t\in [0,1]\mid \{\tau \in \mathcal U_t \mid \mathrm{det}(M_t^0(\tau))=0\}\neq \mathcal U_t\}. $$
   From Proposition \ref{prop:DZ}, we see that $0\in I$. Moreover, it is clear from the definition of $I$ that it is open. Suppose now that $s_0\notin I$, i.e., $\mathrm{det}(M_{s_0}(\tau))=0$ for any $\tau \in \mathcal U_{s_0}.$ Then for any $s \in \mathcal U_{s_0}$, we have $\mathrm{det}(M_{s_0}(\tau))=0$ for any $\tau \in \mathcal U_s\cap \mathcal U_{s_0},$ since the vanishing of the determinant is equivalent to the fact that $0$ is a resonance and hence does not depend on the representation $\rho(s)$ we fixed to apply Proposition \ref{prop:pert}. Since the determinant vanishes on an open subset of $\mathcal U_s$, it cannot vanish on a subset of codimension $\geq 1$ and we deduce that $s\in I$. This in turns imply that $[0,1]\setminus I$ is open. In particular, $I=[0,1]$ since $[0,1]$ is connected.
   In total, since resonant $0$-forms and $2$-forms can be identified, we have obtained the following.
   \begin{prop}
       \label{prop:k=0,2}
       Let $(\Sigma,g)$ be a closed Anosov surface and let $\rho \in \mathcal V_j$. Then there is a neighborhood $\mathcal U_\rho$ of $\rho $ in $\mathcal V_j$ and a complex codimension $\geq 1$ Zariski closed $\mathcal Z_\rho\subset \mathcal U_\rho$ such that 
$$\forall \tau \in \mathcal U_{\rho}\setminus \mathcal Z_{\rho},\quad \mathrm{dim}(\Res_0^{k,1}(\tau,0))=0, \ k=0,2. $$
   \end{prop}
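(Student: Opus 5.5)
The plan is to collect the connectedness argument carried out just before the statement and turn it into a local statement at an arbitrary $\rho\in\mathcal V_j$. The only additional ingredient is the identification of $0$-forms and $2$-forms.

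\textbf{Step 1: local analytic description.} Fix $\rho\in\mathcal V_j$ and take the neighborhood $\mathcal U_\rho$ given by Proposition \ref{prop:pert} applied at $\rho_0=\rho$, $k=0$, $\lambda_0=0$, intersected with $\mathcal V_j$. If $0\notin\mathrm{Res}^0(\rho)$, the resolvent is holomorphic near $0$ for nearby $\tau$, so we may shrink $\mathcal U_\rho$ and take $\mathcal Z_\rho=\emptyset$. Otherwise the spectral projector $\widetilde{\Pi^0_\tau}(0)$ has constant finite rank and depends analytically on $\tau$. I would trivialize its range analytically, for instance by composing with $\widetilde{\Pi^0_\rho}(0)$, which is invertible between the ranges for $\tau$ close to $\rho$. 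This represents $M^0(\tau)=\widetilde{\Pi^0_\tau}(0)\widetilde{\mathcal L}_{X^\tau}\widetilde{\Pi^0_\tau}(0)$ as a finite matrix with analytic entries. Then $f(\tau)=\det M^0(\tau)$ is analytic on $\mathcal U_\rho$, and $f(\tau)=0$ if and only if $0\in\mathrm{Res}^0(\tau)$. Set $\mathcal Z_\rho=\{f=0\}$. This is an analytic subset, and it is Zariski closed in the local sense of being cut out by one analytic equation.

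\textbf{Step 2: $f\not\equiv0$.} This is the main point and comes from the continuation argument already given. Since $\mathcal V_j$ is smooth and path-connected (Proposition \ref{prop:connected2}), hence connected as a complex manifold, we can join $\rho$ to a unitary $\rho_0\in\mathcal V_j$ by a path. Because $\rho_0$ is irreducible, $\rho_0^{\pi_1(M)}=0$, so Proposition \ref{prop:DZ} gives $0\notin\mathrm{Res}^0(\rho_0)$. The open-closed argument on the set $I$ then shows that along the path, each local determinant is not identically zero. Closedness of $I$ uses two facts: the zero set $\{0\in\mathrm{Res}^0(\tau)\}$ is intrinsic, and an analytic function on a connected open set that vanishes on a nonempty open subset vanishes identically. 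Taking $t=1$ gives $f\not\equiv0$ on $\mathcal U_\rho$, after shrinking to a connected neighborhood. Since $\mathcal V_j$ has dimension $(2G-1)r^2+1$, the zero set of a nontrivial analytic function has complex codimension $\geq1$.

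\textbf{Step 3: from $k=0$ to $k=2$.} For $\tau\notin\mathcal Z_\rho$ we have $\mathrm{Res}^{0,1}_0(\tau,0)=0$. For $k=2$ I use the map $u\mapsto u\,d\alpha$, which is an isomorphism $\mathscr E^0_{\tau,0}\to\mathscr E^2_{\tau,0}$ on surfaces, since $\ker\iota_X$ in degree $2$ is spanned by $d\alpha$. Because $\mathcal L_X d\alpha=0$, this map commutes with $\mathcal L_{X^\tau}$ and preserves the wavefront condition. Hence $\mathrm{Res}^{2,1}_0(\tau,0)\cong\mathrm{Res}^{0,1}_0(\tau,0)=0$, and the same holds for the generalized resonant states.

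The delicate step is Step 2: one must make sure that the local determinants from different base points along the path define the same zero locus. This is handled by the intrinsic characterization "$0$ is a resonance of $\tau$", which does not depend on the chosen trivialization.
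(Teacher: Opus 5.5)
Your proposal is correct and follows essentially the same route as the paper. You take the local analytic determinant of $\widetilde{\Pi^0_\tau}(0)\widetilde{\mathcal L}_{X^\tau}\widetilde{\Pi^0_\tau}(0)$, run an open--closed continuation along a path in $\mathcal V_j$ to a unitary point where Proposition~\ref{prop:DZ} excludes $0$ as a resonance, and use $u\mapsto u\,d\alpha$ to pass to $k=2$. Your trivialization of the range via $\widetilde{\Pi^0_\rho}(0)$ only makes explicit what the paper leaves implicit.

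One small slip needs fixing. Irreducibility gives $\rho_0^{\pi_1(M)}=0$ only for $r\ge 2$. For $r=1$ and $j=2G-2$, the unitary point you join to could be the trivial character, for which $0$ is a resonance. You must therefore choose $\rho_0$ unitary and nontrivial, as the paper does.
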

   We now study resonant states for $k=1.$ Similarly, we fix $t\in [0,1]$ and define an analytic family of finite rank operators by
     $$ \mathcal U_t\ni \tau \mapsto M_t^1(\tau):= \widetilde{\Pi_{\tau}^1}(0)  \widetilde{\mathcal L}_{X^\tau}\widetilde{\Pi_{\tau}^1}(0),$$
For a matrix $A\in \mathrm{M}_{n\times n}(\mathbb R)$, denote by $\lambda_1,\ldots, \lambda_n\in \mathbb C$ its eigenvalues counted with algebraic multiplicity. For some $0\leq k\leq n$, let
$$\mathcal P_k(A):=\sum_{i_1<i_2<\ldots<i_k}\lambda_{i_1}\lambda_{i_2}\ldots \lambda_{i_k}. $$
\begin{lem}\label{lem:5.2}
    One has
    $\min\{J\mid \mathcal P_k(A)=0, \ \forall k>J\}=n-\mathrm{mult}_0(A), $
    where $\mathrm{mult}_0(A)$ denotes the algebraic multiplicity of $0$ as an eigenvalue of $M$.
\end{lem}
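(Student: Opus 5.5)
The statement is elementary linear algebra. The plan is to express $\mathcal P_k(A)$ as a coefficient of the characteristic polynomial and read off the multiplicity of the root $0$. Write $m:=\mathrm{mult}_0(A)$ and label the eigenvalues (with algebraic multiplicity) so that $\lambda_1,\ldots,\lambda_{n-m}$ are nonzero and $\lambda_{n-m+1}=\cdots=\lambda_n=0$. Expanding the product gives
\begin{equation*}
\det(t\,\mathrm{Id}-A)=\prod_{i=1}^n(t-\lambda_i)=\sum_{k=0}^n(-1)^k\mathcal P_k(A)\,t^{n-k},
\end{equation*}
with the convention $\mathcal P_0(A)=1$. So $\mathcal P_k(A)$ is, up to sign, the coefficient of $t^{n-k}$ in the characteristic polynomial. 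Note that each $\mathcal P_k(A)$ is therefore a polynomial in the entries of $A$, which is presumably why the lemma is useful for the analytic family $M_t^1(\tau)$.

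Next, I compare this with the factored form $\det(t\,\mathrm{Id}-A)=t^{m}\prod_{i\le n-m}(t-\lambda_i)$. This shows that $t^m$ divides the characteristic polynomial, so every coefficient of $t^{j}$ with $j<m$ vanishes. That is, $\mathcal P_k(A)=0$ for all $k>n-m$. It also gives the coefficient of $t^m$ explicitly: it equals $\prod_{i\le n-m}(-\lambda_i)$, which is nonzero. Equivalently, $\mathcal P_{n-m}(A)=\lambda_1\cdots\lambda_{n-m}\neq0$; directly, every other product of $n-m$ distinct eigenvalues contains some zero $\lambda_i$.

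These two facts give $\min\{J\mid \mathcal P_k(A)=0,\ \forall k>J\}=n-m$. The value $J=n-m$ belongs to the set by the first fact. No smaller $J$ does, since $\mathcal P_{n-m}(A)\neq0$. The edge cases are consistent: for $m=n$ the minimum is $0$, because the condition is vacuous at $J=0$ apart from $k\ge1$, all of which vanish; for $m=0$, $\mathcal P_n(A)=\det A\neq0$ gives $n$.

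I do not expect a real obstacle. The only points of care are fixing the convention $\mathcal P_0=1$ and noting that the statement holds for complex matrices as well. The paper writes $\mathrm{M}_{n\times n}(\mathbb R)$ and "eigenvalue of $M$", apparently meaning $A$; the argument above is over $\CC$, which is what is needed for the application to $M_t^1(\tau)$.
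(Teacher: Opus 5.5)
Your proposal is correct and is essentially the paper's argument. The paper also reorders the eigenvalues, notes that $\mathcal P_{n-m}(A)=\lambda_1\cdots\lambda_{n-m}\neq 0$, and observes that for $k>n-m$ every product in $\mathcal P_k(A)$ contains a zero eigenvalue. Your characteristic-polynomial framing is only a repackaging of this, though it does give polynomiality of $\mathcal P_k$ in the entries of $A$ directly. The paper gets that later via Newton's identities.
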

\begin{proof}
    Let $J=n-\mathrm{mult}_0(A)$ and up to reordering, let $\lambda_1=\ldots=\lambda_J$ denote the non-zero eigenvalues. Then
    $$\mathcal P_J(A)=\prod_{i=1}^J\lambda_i\neq 0. $$
    Moreover, if $k>J$, then any product in $\mathcal P_k(A)$ contains a zero and thus vanishes. 
\end{proof}
Note that by Newton's inversion formulas and since $\tau \mapsto \mathrm{tr}\big(M_t^1(\tau)^\ell\big)$ is analytic for any $\ell$, we obtain that $\tau \mapsto \mathcal P_\ell(M_t^1(\tau))$ is analytic. This in turn implies that the set  $\{\tau \in \mathcal U_t \mid \mathrm{mult}_0(M_t^1(\tau))\geq r(2G-2)+1\}$ is Zariski closed and by the same argument as before, we deduce that $\{\tau \in \mathcal U_t \mid \mathrm{mult}_0(M_t^1(\tau))\geq r(2G-2)+1\}$ has complex codimension $\geq 1$ for any $t\in[0,1].$ Recall that 
$$\mathrm{mult}_0(M_t^1(\tau))=\mathrm{dim}(\mathrm{Res}^{1,\infty}_0(\tau,0)). $$
Moreover, Proposition \ref{prop:k=0,2} and Lemma \ref{lemm:3} imply $\mathrm{dim}(\mathrm{Res}^{1,\infty}_0(\tau,0))\geq r(2G-2)$ with equality if and only if there is no Jordan block for $k=1.$ In total, we have shown the following result.
\begin{prop}
       \label{prop:k=1}
       Let $(\Sigma,g)$ be an orientable connected closed Anosov surface and let $\rho \in \mathcal{V}_j$. Then there is a neighborhood $\mathcal U'_\rho$ of $\rho $ in $\mathcal{V}_j$ and a codimension $\geq 1$ Zariski closed subset $\mathcal Z'_\rho\subset \mathcal U'_\rho$ such that 
$$\forall \tau \in \mathcal U'_\rho \setminus  \mathcal{Z}'_{\rho},\quad \mathrm{dim}(\Res_0^{1,\infty}(\tau,0))=\mathrm{dim}(\Res_0^{1,1}(\tau,0))=\dim(H^1(M,\tau)). $$
   \end{prop}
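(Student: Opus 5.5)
The plan is to run the same connectedness argument as for Proposition \ref{prop:k=0,2}, now with $M_t^1(\tau)$ in place of $M_t^0(\tau)$ and with the invariants $\mathcal P_\ell$ in place of the determinant. Fix $\rho\in\mathcal V_j$, a unitary irreducible $\rho_0\in\mathcal V_j$ (Proposition \ref{prop:connected2}(3)), and an analytic path $\rho(t)$ in $\mathcal V_j$ from $\rho_0$ to $\rho$. For each $t$, Proposition \ref{prop:pert} gives a neighborhood $\mathcal U_t$ of $\rho(t)$ on which $M_t^1(\tau)$ is an analytic family of operators of constant rank $N_t=\dim\Res_0^{1,\infty}(\rho(t),0)$. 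Eigenvalues of $M_t^1(\tau)$ counted with multiplicity are exactly the resonances of $\widetilde{\mathcal L}_{X^\tau}$ near $0$, so $\mathrm{mult}_0(M_t^1(\tau))=\dim\Res_0^{1,\infty}(\tau,0)$. Set $d:=\dim H^1(M,\tau)$; by Lemma \ref{lemm:coho} it is constant on $\mathcal V_j$ ($0$ if $j\neq r(2G-2)$, and $r(2G-2)$ in the factoring component since $\tau$ irreducible and nontrivial has no invariant vector). By Lemma \ref{lem:5.2}, $\mathrm{mult}_0\geq d+1$ iff $\mathcal P_\ell(M_t^1(\tau))=0$ for all $\ell\geq N_t-d$. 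Each $\mathcal P_\ell$ is analytic in $\tau$ (Newton identities from the traces), so this bad set $B_t$ is an analytic subset of $\mathcal U_t$: either all of $\mathcal U_t$ or of codimension $\geq1$, because $\mathcal V_j$ is smooth and $\mathcal U_t$ can be taken connected.

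Next I show $B_t\neq\mathcal U_t$ for every $t$. Let $I$ be the set of $t$ where this holds. At $t=0$, Proposition \ref{prop:DZ} gives $\mathrm{mult}_0(M_0^1(\rho_0))=d$, so $0\in I$. The condition ``$\mathrm{mult}_0\geq d+1$'' is intrinsic: it is $\dim\Res_0^{1,\infty}(\tau,0)\geq d+1$ and does not depend on the base point $\rho(t)$ used to build $M_t^1$. Therefore on overlaps $\mathcal U_s\cap\mathcal U_{s'}$ the sets $B_s$ and $B_{s'}$ coincide. If $B_{s_0}=\mathcal U_{s_0}$, then for $s$ near $s_0$ the set $B_s$ contains a nonempty open set, so $B_s=\mathcal U_s$. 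If $B_{s_0}$ is proper, the same overlap argument makes $B_s$ proper for nearby $s$. Hence $I$ and its complement are both open, and $I=[0,1]$. In particular $B_1$ is a proper analytic subset of $\mathcal U_1\ni\rho$.

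Now define $\mathcal U'_\rho:=\mathcal U_1\cap\mathcal U_\rho$ and $\mathcal Z'_\rho:=(B_1\cup\mathcal Z_\rho)\cap\mathcal U'_\rho$, where $\mathcal U_\rho,\mathcal Z_\rho$ come from Proposition \ref{prop:k=0,2}. This set has codimension $\geq1$. For $\tau\in\mathcal U'_\rho\setminus\mathcal Z'_\rho$ we have $\Res_0^{k,1}(\tau,0)=0$ for $k=0,2$. Lemma \ref{lemm:3} then gives $\dim\Res_0^{1,\infty}(\tau,0)\geq\dim\Res_0^{1,1}(\tau,0)=d$, while $\tau\notin B_1$ gives $\dim\Res_0^{1,\infty}(\tau,0)\leq d$. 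So all three quantities equal $\dim H^1(M,\tau)$.

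The main obstacle is the intrinsic-ness and analyticity bookkeeping. One must check that $\mathrm{mult}_0(M_t^1(\tau))$ really equals $\dim\Res_0^{1,\infty}(\tau,0)$: the contour must enclose only the resonances tending to $0$, and the eigenvalue $0$ of the finite-rank restriction must correspond to resonance $0$. One must also check that the analytic sets are described by analytic functions, not merely continuous ones, so that the dichotomy ``everything or codimension $\geq1$'' holds on connected neighborhoods.
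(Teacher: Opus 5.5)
Your proposal is correct and follows essentially the same route as the paper. You use the $\mathcal P_\ell$-invariants of $M_t^1(\tau)$ via Lemma \ref{lem:5.2} and Newton's identities, run the open/closed connectedness argument along a path from a unitary representation using Proposition \ref{prop:DZ}, and then pin down equality with Proposition \ref{prop:k=0,2} and Lemma \ref{lemm:3}. Your threshold $d+1$ with $d=\dim H^1(M,\tau)$ (instead of the paper's $r(2G-2)+1$, which only fits the factoring component) and your explicit inclusion of $\mathcal Z_\rho$ in $\mathcal Z'_\rho$ are slightly more careful bookkeeping. The one thing to add is that $\rho_0$ must be chosen nontrivial when $r=1$ in the factoring component, as Proposition \ref{prop:DZ} requires.
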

   We can now deduce Theorem \ref{theo2} from Propositions \ref{prop:k=0,2} and \ref{prop:k=1}. For $j=1,\ldots, r(2G-2)$, define $\mathcal U_{g,j}\subset \mathcal V_j$ by 
   $$ \mathcal U_{g,j}:=\bigcup_{\rho\in \mathcal V_j}(\mathcal U_\rho \setminus  \mathcal Z_{\rho})\cap\bigcup_{\rho\in \mathcal V_j}(\mathcal U'_\rho \setminus  \mathcal Z'_{\rho}).  $$
   The subset $\mathcal U_{g,j}$ is open in $\mathcal V_j$ since all $\mathcal U_\rho  \setminus \mathcal Z_{\rho}$ and $\mathcal U'_\rho \setminus  \mathcal Z'_{\rho}$ are open.  Its complement has complex codimension $\geq 1$. Moreover, if $\rho \in \mathcal U_{g,j}$, then there exists $\rho_1$ and $\rho_2$ in $\mathcal V_j$ such that $\rho \in \mathcal U_{\rho_1} \setminus  \mathcal Z_{\rho_1}$ and $\rho \in \mathcal U'_{\rho_2} \setminus  \mathcal Z_{\rho_2}'$. By Propositions \ref{prop:k=0,2} and \ref{prop:k=1}, this implies that 
   \begin{align*}
   \begin{split}
        &\mathrm{dim}(\Res_0^{k,\infty}(\rho,0))=0, \quad k=0,2, 
        \\ &\mathrm{dim}(\Res_0^{1,\infty}(\rho,0))=\mathrm{dim}(\Res_0^{1,1}(\rho,0))=\mathrm{dim}(H^1(M,\rho)). 
        \end{split}
        \end{align*}
        Theorem \ref{theo2} then follows from Lemma \ref{lemm:coho} by taking $\mathcal U_g:=\cup_{j=1}^{r(2G-2)}\mathcal U_{g,j}$.
\end{proof}

\subsection{Proof of Corollary \ref{cor}}
\begin{proof}
We first remark that the Reidemeister--Turaev torsion (see \cite[Theorem 2.3.1]{BFS23})
\begin{equation*}
    \tau_{\mathfrak{e}_{\mathrm{geod}},\mathfrak{o}}(\rho)= \det(\id-\rho(c))^{2G-2}
\end{equation*}
is a constant in $\mathcal{V}_j$ since $\rho(c)=e^{2\pi ij/(r(2G-2))}$, see Proposition \ref{prop:connected2}.

  Let $(\Sigma,g)$ be a closed negatively curved surface and let $\rho \in \mathcal U_g$ be a representation which does not factor through $\pi_1(\Sigma)$. Let $j\neq 0$ be the integer such that $\rho \in \mathcal V_j$. Let $g_0$ be the hyperbolic metric of same volume which is conformal to $g$. Using the normalized Ricci flow, it is easy to see that there exists a smooth family of negatively curved metrics 
  $g(t)$ such that $g(0)=g_0$ and $g(1)=g$. Indeed, the normalized Ricci flow starting from a negatively curved metric on a surface is known to exists in positive time, to preserve negative curvature, the volume and the conformal class of the metric. Finally, it is known to converge exponentially fast to the unique hyperbolic metric of same volume in the conformal class of $g$, see \cite[Theorem 3.3]{Ham88}. In particular, we can flow along the Ricci flow for a long time until we are very close to $g_0$, and since the space of negatively curved metrics is open, this shows the existence of the desired path.
  By \cite[Theorem 5]{CD24}, the dynamical torsions of $(g,\rho)$ and $(g_0,\rho)$ are equal. Since $\mathcal U_g\cap \mathcal V_j$ and $\mathcal U_{g_0}\cap \mathcal V_j$ both have complements of complex codimension $\geq 1$ in $\mathcal V_j$, we deduce that $\mathcal U_{g_0}\cap \mathcal U_g\cap \mathcal V_j\neq \emptyset.$ Let $\rho_0$ be a representation in the intersection. By connectedness of $\mathcal{V}_j$, see Proposition \ref{prop:connected2}, we can choose a smooth family of representations $\rho(t)\in \mathcal{V}_j$ such that $\rho(0)=\rho_0$ and $\rho(1)=\rho$. Note that this is a path of acyclic representations by Lemma \ref{lemm:coho}. By \cite[Theorem 6]{CD24}, we deduce that the dynamical torsions of $(g_0,\rho)$ and $(g_0,\rho_0)$ are equal. Since $\rho\in \mathcal U_g$ and $\rho_0\in \mathcal{U}_{g_0}$, Theorem \ref{theo2} implies that $0$ is not a resonance for $(g,\rho)$ or $(g_0,\rho_0)$ and the dynamical torsion of $(g,\rho)$ (resp. $(g_0,\rho_0)$) thus coincides with $\zeta_{g,\rho}(0)^{-1}$ (resp. $\zeta_{g_0,\rho_0}(0)^{-1}$) up to a sign. By \cite[Theorem A]{BFS23}, we conclude that
  \begin{equation*}
      \zeta_{g,\rho}(0)^{-1}=\pm \zeta_{g_0,\rho_0}(0)^{-1}=\pm \tau_{\mathfrak{e}_{\mathrm{geod}},\mathfrak{o}}(\rho)= \pm \det(\id-\rho(c))^{2G-2}.\qedhere
  \end{equation*}
\end{proof}

\section{Proofs of Theorem \ref{theo:3} and \ref{theo:4}}
\label{sec:last}
\subsection{Proof of Theorem \ref{theo:3}}
In this subsection, we prove Theorem \ref{theo:3}. The idea is to rewrite the dynamical determinant $\zeta_{0,\mathrm{Ad}}$ (see \eqref{eq:zeta_l}) in terms of the \emph{twisted Selberg zeta function}. The zeros of this zeta function were studied in \cite{NS22}.
\begin{proof}
Let $\rho \in \mathrm{Hom}(\pi_1(M),\mathrm{GL}_r(\CC)).$
For $s\in \mathbb C$ such that $\mathrm{Re}(s)\gg 1,$ the \emph{twisted Selberg zeta function} is equal to
    \begin{equation} \label{eq:Selberg}Z(s,\rho)=\prod_{k=0}^{+\infty}\prod_{\gamma\in \mathcal{P}} \det(\mathrm{Id}-\rho([\gamma])e^{-(s+k)\ell_g(\gamma)})=\exp\left(-\sum_{k=0}^{+\infty}\sum_{\gamma\in \mathcal{P}}\sum_{j=1}^{+\infty}\frac 1 j \mathrm{tr}\big(\rho([\gamma])^j \big)e^{-(s+k)j\ell_g(\gamma)}\right).
    \end{equation}
    Since $(\Sigma,g)$ is hyperbolic, one can compute explicitly the determinant of the Poincaré map:
    \begin{align*}\forall \gamma \in \mathcal P, \ \forall j\geq 1,\quad |\mathrm{det}(\mathrm{Id}-\mathcal P_{\gamma^j})|^{-1}&=\big(e^{j\ell_g(\gamma)}-1 \big)^{-1}\big(1-e^{-j\ell_g(\gamma)}\big)^{-1}=e^{-\ell_g(\gamma)}\frac{1}{\big(1-e^{-j\ell_g(\gamma)}\big)^2}
    \\
    &=e^{-\ell_g(\gamma)}\sum_{k=0}^{+\infty}(k+1)e^{-k\ell_g(\gamma)}=\sum_{k=0}^{+\infty}(k+1)e^{-(k+1)\ell_g(\gamma)}.
    \end{align*}
    In particular, we obtain from \eqref{eq:zeta_l} that
    \begin{align*}
        \zeta_{0,\rho}(s)&=\exp\left(-\sum_{\gamma\in \mathcal{P}}\sum_{j=1}^{+\infty}\frac 1 j \mathrm{tr}\big(\rho([\gamma])^j \big)\sum_{k=0}^{+\infty}(k+1)e^{-(k+1+s)\ell_g(\gamma)}\right).
    \end{align*}
    Exchanging the order of summation and using \eqref{eq:Selberg} gives for $s\in \CC$ such that $\mathrm{Re}(s)\gg 1,$
$$\zeta_{0,\rho}(s)=\prod_{k=1}^{+\infty}Z(s+k,\rho). $$
The relation extends to $s\in \CC$ using the meromorphic extensions of $\zeta_0(s,\rho)$ and $Z(s,\rho).$
The Selberg zeta function for $\rho=\mathrm{Ad}$ was studied in \cite{NS22}. In particular, one has
$$Z(s,{\mathrm{Ad}})=Z(s-1)Z(s)Z(s+1), $$
where $Z(s)$ is the (untwisted) Selberg zeta function of $(\Sigma,g),$ see \cite[\S 3.1]{NS22}. In total, we have obtained
\begin{equation}
    \label{eq:adj}
    \zeta(s, \mathrm{Ad})=Z(s)Z(s+1)^2\prod_{k=2}^{+\infty}Z(s+k)^3.
\end{equation}
Now, $Z(s)$ does not vanish in $\{s \in \mathbb C \mid \mathrm{Re}(s)>1\}$ and vanishes at $s=0$ to order $1+(2G-2)$ and at $s=1$ to order $1$. Since the algebraic multiplicity of $0$ as a Pollicott--Ruelle resonance is equal to the multiplicity of $0$ as a zero of $\zeta_0(s,\mathrm{Ad})$, we conclude that
$$\mathrm{dim}(\Res_{0}^{0,\infty}(\mathrm{Ad}))=1+(2G-2)+2=2G+1. $$
We can use \eqref{eq:orderofvan} and \cite[Corollary C]{FS23} to obtain
$$ \mathrm{dim}(\Res_{0}^{1,\infty}({\rm Ad},0))=3(2G-2)+2(2G+1)=10G-4.$$
This concludes the proof of Theorem \ref{theo:3}.
\end{proof}
\subsection{Proof of Theorem \ref{theo:4}}
\label{sec:JD}
In this subsection, we show Theorem \ref{theo:4}. 
\begin{proof} Let $(\Sigma,g)$ be an orientable connected closed hyperbolic surface. Let $\pi:\mathrm{SL}_2(\mathbb{R})\to \mathrm{PSL}_2(\mathbb{R})$ be the projection. Let us consider
\begin{equation*}
\Gamma:=\pi_1(\Sigma) \subset \mathrm{PSL}_2(\RR),\quad \Tilde{\Gamma}=\pi^{-1}(\Gamma) \subset \mathrm{SL}_2(\mathbb{R}).
\end{equation*}

Note that $\Tilde{\Gamma}$ acts by left multiplication on $\mathbb{C}^2$, so we can define
\begin{equation*}
\Tilde{\mathcal{E}}:=\mathrm{SL}_2(\mathbb{R})\times \CC^2/\sim,\quad ({x},v)\sim (\tilde \gamma \cdot  x,\tilde \gamma v), \ \forall \tilde \gamma\in \tilde \Gamma, \ \forall  x\in \mathrm{SL}_2(\RR), \ \forall v\in \CC^2.
\end{equation*}
This is a rank-$2$ vector bundle over $\tilde{\Gamma}\backslash \mathrm{SL}_2(\RR)\cong \Gamma \backslash \mathrm{PSL}_2(\RR)=M$. The trivial connection on $\mathrm{SL}_2(\mathbb{R})\times \CC^2$ descends to a flat connection on $\Tilde{\mathcal{E}}$. We can define the associated representation $\tau :\pi_1(M)\to \mathrm{GL}_2(\CC)$ by setting, for any $[\gamma]\in \pi_1(M)$, $\tau([\gamma])$ to be the parallel transport for the flat connection along $\gamma$, a representative of $[\gamma]$. Since the connection is flat, this does not depend on the choice of $\gamma$. Since $\tilde{\Gamma}$ is Zariski dense in $\mathrm{SL}_2(\RR)$ and the left multiplication of $\mathrm{SL}_2(\RR)$ on $\CC^2$ is irreducible, we know $\tau$ is irreducible.
\begin{lem}
    The representation $\tau$ does not factor through $\pi_1(\Sigma).$
\end{lem}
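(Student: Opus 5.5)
We will show that $\tau(c)=-\mathrm{Id}$, where $c$ is the generator of $\pi_1$ of a circle fiber from \eqref{eq:pi1pre}. By the remark after \eqref{eq:pi1pre}, $\tau$ factors through $\pi_1(\Sigma)$ if and only if $\tau(c)=\mathrm{Id}$, so this suffices. The plan is to identify the holonomy of the flat bundle $\tilde{\mathcal E}$ along a fiber with the action of the corresponding element of $\tilde\Gamma$ on $\CC^2$.

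First, we set up the identification. The universal cover of $M=\Gamma\backslash \mathrm{PSL}_2(\RR)$ is $\widetilde{\mathrm{SL}_2(\RR)}$, and $\pi_1(M)$ is the preimage of $\Gamma$ in $\widetilde{\mathrm{SL}_2(\RR)}$. This group surjects onto $\tilde\Gamma\subset \mathrm{SL}_2(\RR)$ via the covering map $p:\widetilde{\mathrm{SL}_2(\RR)}\to \mathrm{SL}_2(\RR)$. Pulling $\tilde{\mathcal E}$ back to $\widetilde{\mathrm{SL}_2(\RR)}$ gives the trivial bundle with the trivial connection. Comparing with the definition \eqref{eq:flateq}, the associated representation is $\tau(\gamma)=p(\gamma)$ acting on $\CC^2$ by left multiplication, up to the inverse/opposite convention discussed in the footnote of \S\ref{sec:repr}. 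That convention is irrelevant here, since $p(c)$ will turn out to be central.

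Next, we compute $p(c)$. The circle fiber of $M\to\Sigma$ over the base point is the image of the loop $\theta\mapsto \Gamma\, k_\theta$, where $k_\theta\in \mathrm{PSL}_2(\RR)$ is the rotation by angle $\theta$, for $\theta\in[0,\pi]$. In $\mathrm{PSL}_2(\RR)$ this loop closes up at $\theta=\pi$ and generates $\pi_1$ of the fiber $\mathrm{PSO}(2)$. Lifted to $\mathrm{SL}_2(\RR)$, the path becomes $\theta\mapsto \begin{pmatrix}\cos\theta&-\sin\theta\\ \sin\theta&\cos\theta\end{pmatrix}$, which runs from $\mathrm{Id}$ to $-\mathrm{Id}$. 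The holonomy of the trivial connection along this lifted path, read off through the identification $(x,v)\sim(-x,-v)$ (note $-\mathrm{Id}\in\tilde\Gamma$), is multiplication by $-\mathrm{Id}$. Hence $\tau(c)=-\mathrm{Id}\neq\mathrm{Id}$. This is also consistent with the Schur's lemma constraint used earlier, namely that $\rho(c)$ is a root of unity with $\rho(c)^{2(2G-2)}=\mathrm{Id}$.

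The main care needed is in matching the fiber generator $c$, which is a loop in the unit tangent bundle, with the rotation subgroup $\mathrm{PSO}(2)$ under the identification $S\Sigma\cong\Gamma\backslash\mathrm{PSL}_2(\RR)$. For this we use that $\mathrm{PSL}_2(\RR)$ acts on $S\mathbb H^2$ with the stabilizer of the base point's fiber equal to $\mathrm{PSO}(2)$, acting on unit vectors by rotation by angle $2\theta$. Hence $\theta\in[0,\pi]$ traverses the fiber exactly once, possibly $c^{-1}$ depending on orientation, which does not affect the conclusion since $(-\mathrm{Id})^{-1}=-\mathrm{Id}$.
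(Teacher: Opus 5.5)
Your proof is correct and follows essentially the same route as the paper: you identify the fiber generator $c$ with the rotation loop in $\mathrm{PSO}(2)$, note that its lift to $\mathrm{SL}_2(\RR)$ runs from $\mathrm{Id}$ to $-\mathrm{Id}$, and conclude that the holonomy of $\tilde{\mathcal E}$ along it is $-\mathrm{Id}$ because $\tilde\Gamma$ acts by left multiplication. Your extra bookkeeping (the universal cover $\widetilde{\mathrm{SL}_2(\RR)}$, and the orientation and inverse conventions) makes explicit what the paper leaves implicit.
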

\begin{proof}
    Let $(x,v)\in M$ and define $s_{x,v}:[0,2\pi]\to S_x\Sigma$ such that $s_{x,v}(\theta):=(x,R_\theta v)$ where $R_\theta=\cos(\theta)+\sin(\theta)J$ is the rotation in the circle fiber defined by the complex structure $J.$ The curve $s_{x,v}$ is closed, generates $\pi_1(\mathbb S^1)\cong \mathbb Z$ and $\pi_*s_{x,v}=0$. Hence, $\tau$ factors through $\pi_1(\Sigma)$ if and only if $\tau([s_{x,v}])=\mathrm{Id}$, see \S \ref{sec:pi1}. The rotation $R_\theta$ in the identification $S\mathbb H^2\cong \mathrm{PSL}_2(\RR)$ is given by the multiplication by $$R(\theta):=\begin{pmatrix}
        \cos(\theta/2)&\sin(\theta/2)\\
        -\sin(\theta/2)&\cos(\theta/2)
        \end{pmatrix}.$$
        Note that $R(2\pi)=-\mathrm{Id}$ is trivial in $\mathrm{PSL}_2(\RR)$ but \emph{not} in $\mathrm{SL}_2(\RR).$ In particular, the lift $\tilde{s}_{x,v}$ of $s_{x,v}$ to $\mathrm{SL}_2(\RR)$ connects $\mathrm{Id}$ to $-\mathrm{Id}$. Since the $\tilde{\Gamma}$-action defining $\tilde{\mathcal E}$ is the left multiplication, we deduce that $\tau([s_{x,v}])$ acts as $-\mathrm{Id}$ on $\CC^2$, which shows that $\tau$ does not factor through $\pi_1(\Sigma).$
\end{proof}

 A section $s\in C^{\infty}( M;\Tilde{\mathcal{E}})$ identifies with  a function $s\in C^{\infty}(\mathrm{SL}_2(\RR), \CC^2)$ which is $\tilde{\Gamma}$-equivariant, that is
    $$\forall x\in \mathrm{SL}_2(\RR),\ \forall \tilde \gamma \in \tilde{\Gamma},\quad f(\tilde \gamma\cdot x)=\tilde \gamma \cdot f(x). $$
 We start by showing that $\Tilde{\mathcal{E}}$ splits as the direct sum of two invariant trivial line bundles $\Tilde{\mathcal{E}}^\pm$.  We define two sections $s_\pm \in C^{\infty}( M;\Tilde{\mathcal{E}})$ by defining $s_+(x)$ (resp. $s_-(x)$) to be the first column (resp. second column) of $x\in \mathrm{SL}_2(\RR)$. We check the equivariance property:
    $$\forall \tilde \gamma \in \tilde{\Gamma}, \ \forall x \in \mathrm{SL}_2(\RR), \quad  s_\pm(\tilde \gamma \cdot x)=\tilde \gamma \cdot s_\pm(x),$$
    by the basic rules of matrix multiplication.
     Recall that in the identification $S\mathbb H^2\cong \mathrm{PSL}_2(\RR)$, the action of the geodesic flow is given by 
    $$\forall t\in \RR, \ \forall [ x]\in \mathrm{PSL}_2(\RR)\quad \varphi_t^g([x])=[x] \cdot \begin{pmatrix}
        e^{t/2}&0\\
        0&e^{-t/2}
        \end{pmatrix}, $$
         which lifts to $\mathrm{SL}_2(\RR)$ to the right multiplication by $\mathrm{diag}(e^{t/2},e^{-t/2})$.
        Define $\Tilde{\mathcal{E}}^\pm:=\mathrm{span}(s_\pm)$. Then $\Tilde{\mathcal{E}}^\pm$ are trivial bundles which are invariant by the geodesic flow:
        \begin{equation}
            \label{eq:invsec}
            \forall t\in \mathbb R, \ \forall x\in \mathrm{SL}_2(\RR), \quad s_\pm(\varphi_t^g(x))=e^{\pm t/2}s_\pm(x).
        \end{equation} 
        See $f\in C^{\infty}( M;\Tilde{\mathcal{E}})$  as an equivariant function in $C^{\infty}(\mathrm{SL}_2(\mathbb R), \CC^2)$. Let $f_\pm \in C^{\infty}(\mathrm{SL}_2(\mathbb R),\CC)$ be two functions such  that
\begin{equation*}
\forall x\in \mathrm{SL}_2(\mathbb R), \ \forall \tilde \gamma \in \tilde{\Gamma},\quad     f(x)=f_+(x) s_+(x)+f_-(x)s_-(x),\quad f_{\pm}(\tilde \gamma (x))=f_{\pm}(x).
\end{equation*}
Note that the invariance condition on $f_\pm$ shows that they actually define functions on  $M$.
Then for any $t\in \mathbb R$, one has
\begin{equation}
\label{eq:11}
\begin{split}
    f(\varphi_t^g(x))&=f_+(\varphi_t^g(x)) s_+(\varphi_t^g(x))+f_-(\varphi_t^g(x))s_-(\varphi_t^g(x))
    \\&=e^{t/2}f_+(\varphi_t^g(x)) s_+(x)+e^{-t/2}f_-(\varphi_t^g(x))s_-(x).
    \end{split}
\end{equation}
Differentiating \eqref{eq:11} at $t=0$, we find
\begin{equation}
\label{eq:important}
    \mathcal L_{X^\tau}(f)(x)=\frac{d}{dt} f(\varphi_t^g(x))|_{t=0}=(X+1/2)f_+(x) s_+(x)+(X-1/2)f_-(x) s_-(x).
\end{equation}
In other words, the action of $\mathcal L_{X^\tau}$ on $0$-forms is the same as the (non-twisted) action of $X$ on functions (on $M$) shifted by $\pm \tfrac 12$. Therefore, to show that there is a Jordan block for the action of $\mathcal L_{X^{\tau}}$ at zero on $0$-forms, it suffices to show that $X$ has a Jordan block on $0$-forms at $-\tfrac 12$. We will use  \cite{GHW} which describes the Jordan block structure of the Pollicott--Ruelle resonances on $0$-forms. The main result of this section is the following proposition.
\begin{prop}
    \label{prop:JDtau}
 Let $(\Sigma,g)$ be an orientable connected closed hyperbolic surface and let $\tau$ be the representation of $\pi_1(M)$ constructed above. For $k=0,1,2$, the Jordan block at zero is at most of order $2$, that is, $\mathrm{Res}^{k,\infty}_0(\tau,0)=\mathrm{Res}^{k,2}_0(\tau,0)$. Moreover, we have
 \begin{equation}
     \label{eq:fulldescriptionJD}
     \begin{split}
   &\mathrm{dim}(\mathrm{Res}^{k,1}_0(\tau,0))=\mathrm{dim}(\mathrm{ker}(\Delta_g-\tfrac 14))    ,\quad  \mathrm{dim}(\mathrm{Res}^{k,\infty}_0(\tau,0))=2\mathrm{dim}(\mathrm{ker}(\Delta_g-\tfrac 14)), \ k=0,2,
   \\
   &\mathrm{dim}(\mathrm{Res}^{1,1}_0(\tau,0))=2\mathrm{dim}(\mathrm{ker}(\Delta_g-\tfrac 14))    ,\quad  \mathrm{dim}(\mathrm{Res}^{1,\infty}_0(\tau,0))=4\mathrm{dim}(\mathrm{ker}(\Delta_g-\tfrac 14)).
     \end{split}
 \end{equation}
 In particular, one has $m(g,\tau)=0$ and there is a non trivial Jordan block at zero for $k=0,1,2$ if and only if $\tfrac{1}{4}\in \mathrm{Spec}(\Delta_g).$
\end{prop}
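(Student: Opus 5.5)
The plan is to reduce everything to untwisted resonances of $X$ on $M$ at $\pm\tfrac12$, using the splitting $\tilde{\mathcal E}=\tilde{\mathcal E}^+\oplus\tilde{\mathcal E}^-$ from \eqref{eq:important}, and then to read off the Jordan structure from the quantum--classical correspondence of \cite{GHW}.

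\textbf{Degree $0$.} By \eqref{eq:important}, $\mathcal L_{X^\tau}$ on $0$-forms is conjugate to $(X+\tfrac12)\oplus(X-\tfrac12)$ acting on scalar distributions. The sections $s_\pm$ are smooth, so the wavefront condition is preserved. Hence
\[
\mathrm{Res}^{0,\bullet}_0(\tau,0)\cong \mathrm{Res}^{0,\bullet}_0(X,-\tfrac12)\oplus \mathrm{Res}^{0,\bullet}_0(X,\tfrac12).
\]
Since $\mathrm{Re}\,\lambda\le 0$ for every scalar resonance of the contact Anosov flow, the second summand vanishes. At $\lambda=-\tfrac12$, I would invoke \cite{GHW} for hyperbolic surfaces. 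Resonances in the first band are $\lambda=-\tfrac12\pm\sqrt{\tfrac14-\mu}$ for $\mu\in\mathrm{Spec}(\Delta_g)$, and $\mu=\tfrac14$ is exactly the exceptional point where the two first-band resonances collide. There \cite{GHW} give a Jordan block of size exactly $2$, with resonant states in bijection with $\ker(\Delta_g-\tfrac14)$ through the pushforward/Poisson map. One must also check that no resonance of a higher band (at $-\tfrac12-n\pm\ldots$, $n\ge1$) lands at $-\tfrac12$. Any such coincidence would again come only from specific spectral values, and I expect \cite{GHW} to exclude these. Together this gives the $k=0$ line of \eqref{eq:fulldescriptionJD} and $\mathrm{Res}^{0,\infty}=\mathrm{Res}^{0,2}$.

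\textbf{Degree $2$.} The map $u\mapsto u\,d\alpha$ commutes with $\mathcal L_{X^\tau}$, because $\mathcal L_X d\alpha=0$. On a surface, $\mathscr E^2_{\tau,0}$ is the line bundle spanned by $d\alpha$ tensored with $\tilde{\mathcal E}$. So this map is an isomorphism of the full Jordan structures, and the $k=2$ line of \eqref{eq:fulldescriptionJD} follows from degree $0$.

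\textbf{Degree $1$.} Write $\Omega^1_0=E_u^*\oplus E_s^*$. In the algebraic frame of $\mathrm{PSL}_2(\RR)$, the horocyclic coframe $U_\pm^*$ satisfies $\mathcal L_X U_\pm^*=\pm U_\pm^*$. Hence
\[
\mathcal L_{X^\tau}\ \text{on}\ \mathscr E^1_{\tau,0}\ \simeq\ \bigoplus_{\epsilon,\epsilon'\in\{\pm\}}\big(X+\tfrac{\epsilon}{2}+\epsilon'\big)
\]
on scalars. The relevant scalar resonances therefore sit at $-\tfrac12$, $\tfrac12$, $-\tfrac32$ and $\tfrac32$, and only the one at $-\tfrac12$ can be nonzero. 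However, the wavefront condition $\mathrm{WF}\subset E_u^*$ is not preserved uniformly in this decomposition. Instead I would use the known description of $1$-form resonances on hyperbolic surfaces, as in \cite{zazi} and \cite{GHW}. Generalized resonant $1$-forms at $\lambda$ correspond to $0$-form resonant data at $\lambda\pm1$, through $U_\pm$ applied to first-band states, together with the $d\alpha$-type contributions. This yields two copies of the $0$-form block structure at $-\tfrac12$: twice the dimensions, with blocks still of size $\le2$. The result is $2\dim\ker(\Delta_g-\tfrac14)$ resonant states and $4\dim\ker(\Delta_g-\tfrac14)$ generalized ones.

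\textbf{Conclusion.} Finally, \eqref{eq:orderofvanish} gives $m(g,\tau)=4d-2d-2d=0$, where $d=\dim\ker(\Delta_g-\tfrac14)$. The Jordan block is nontrivial iff $d>0$. As a consistency check, $\tau$ is acyclic by Lemma \ref{lemm:coho}, which matches $m=0$. I expect the degree-$1$ step to be the main obstacle: getting the exact $1$-form count at the exceptional point, which requires a careful treatment of the wavefront condition and of the $E_u^*$ versus $E_s^*$ components. I would first try to derive it from the vanishing $m(g,\tau)=0$, which forces $\dim\mathrm{Res}^{1,\infty}=4d$. The count $\dim\mathrm{Res}^{1,1}=2d$ would then be obtained by explicit construction from the first-band states via $U_-$.
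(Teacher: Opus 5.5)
Your degree-$0$ and degree-$2$ arguments match the paper's. The higher-band check you defer is settled by \cite[Proposition 1.3]{GHW}: every generalized resonant state at $-\tfrac12$ lies in the first band.

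The gap is in degree $1$. Your decomposition $\mathcal L_{X^\tau}|_{\mathscr E^1_{\tau,0}}\simeq\bigoplus_{\epsilon,\epsilon'}(X+\tfrac{\epsilon}{2}+\epsilon')$ is correct. Contrary to your worry, it is an exact isomorphism of the spaces $\mathrm{Res}^{1,\bullet}_0(\tau,0)$, Jordan structure included. On a hyperbolic surface the horocyclic coframe $\beta_\pm$ and the sections $s_\pm$ are smooth global frames. So writing $u=\sum f_{\epsilon\epsilon'}\,s_\epsilon\otimes\beta_{\epsilon'}$ gives $\mathrm{WF}(u)=\bigcup\mathrm{WF}(f_{\epsilon\epsilon'})$, and the decomposition is the paper's route. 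What actually fails is your claim that only the scalar resonance at $-\tfrac12$ can be nonzero. The space $\mathrm{Res}^{0,\bullet}_0(X,-\tfrac32)$ is nonzero as soon as $d=\dim\ker(\Delta_g-\tfrac14)>0$. Three facts from \cite{GHW} give this:
\begin{itemize}
\item By \cite[Proposition 1.3]{GHW}, it equals $(\mathrm{Res}^{0,\bullet}_0(X,-\tfrac32)\cap\ker U_-)\oplus U_+\big(\mathrm{Res}^{0,\bullet}_0(X,-\tfrac12)\cap\ker U_-\big)$.
\item The first summand is isomorphic to $\ker(\Delta_g+\tfrac34)=0$ by \cite[Theorem 3.3]{GHW}.
\item $U_+$ is injective on the first band at $-\tfrac12$ and satisfies $(X+\tfrac32)U_+=U_+(X+\tfrac12)$, so it carries the block structure at $-\tfrac12$ over unchanged.
\end{itemize}
This summand is your ``second copy''. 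It comes from the $-\tfrac32$ component, not from any failure of the wavefront condition. It is exactly how the paper obtains $2d$ resonant and $4d$ generalized resonant states, with blocks of size $\le 2$.

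The substitute argument you propose does not close the gap. The appeal to ``the known description of $1$-form resonances'' with ``$d\alpha$-type contributions'' is not specific, and there is no $d\alpha$ component in $\Omega^1_0$ on a $3$-manifold. Deriving $\dim\mathrm{Res}^{1,\infty}_0(\tau,0)=4d$ from $m(g,\tau)=0$ is circular, since $m(g,\tau)=0$ is one of the conclusions. Acyclicity of $\tau$ does not by itself give $m=0$; that is Conjecture \ref{conj}(2). Even if you imported $m(g,\tau)=0$ independently, e.g.\ from \cite[Theorem A]{BFS23}, it would only fix the total $\dim\mathrm{Res}^{1,\infty}_0(\tau,0)$. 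It would not give $\dim\mathrm{Res}^{1,1}_0(\tau,0)=2d$, nor $\mathrm{Res}^{1,\infty}_0(\tau,0)=\mathrm{Res}^{1,2}_0(\tau,0)$. Finally, in the paper's convention the first band is $\ker U_-$, so a construction ``via $U_-$'' applied to first-band states produces zero; the lowering operator is $U_+$.
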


\begin{proof}
   Let us start with $k=0$. From \eqref{eq:important}, it is clear that $$\mathrm{Res}_0^{0,\bullet}(\tau,0)\cong \mathrm{Res}_0^{0,\bullet}(X,\tfrac 12)\oplus \mathrm{Res}_0^{0,\bullet}(X,-\tfrac 12),\quad \bullet\in \mathbb N \cup \{+\infty\}.$$
   The Pollicott--Ruelle spectrum of $X$ lies in the half plane $\{\mathrm{Re}(\lambda)\leq 0\}$ so the first term is equal to $\{0\}$. For the second term, we use \cite[Theorem 3.3, 2)]{GHW} which gives that the Jordan block at $-1/2$ is at most of order $2,$ that is, $\mathrm{Res}_0^{0,\infty}(X,-\tfrac 12)=\mathrm{Res}_0^{0,2}(X,-\tfrac 12)$. Moreover, if we denote by $U_-\in C^{\infty}(M;TM)$ the unstable horocycle operator, we have
   \begin{align*}
        &\mathrm{dim}(\mathrm{Res}^{0,1}_0(X,-\tfrac 12)\cap \ker(U_-))=\mathrm{dim}(\mathrm{ker}(\Delta_g-\tfrac 14))    ,
        \\&\mathrm{dim}(\mathrm{Res}^{0,\infty}_0(X,-\tfrac 12)\cap \ker(U_-))=2\mathrm{dim}(\mathrm{ker}(\Delta_g-\tfrac 14)).
   \end{align*}
Note that  since $|-\tfrac 12|<1$, \cite[Proposition 1.3]{GHW} implies that $\mathrm{Res}^{0,\bullet}_0(X,-\tfrac 12)=\mathrm{Res}^{0,\bullet}_0(X,-\tfrac 12)\cap \ker(U_-)$ for $\bullet =1,\infty$. In other words, all (generalized) resonant states at $-1/2$ are in the first band. This readily implies \eqref{eq:fulldescriptionJD} for $k=0$. Since $u\mapsto u\wedge d\alpha$ defines an isomorphism  $\mathrm{Res}^{0,\infty}_0(\tau,0)\to \mathrm{Res}^{2,\infty}_0(\tau,0)$ that preserves the Jordan block structure, we also deduce \eqref{eq:fulldescriptionJD} for $k=2$.

Suppose now that $k=1$. Since $g$ is hyperbolic, $\Omega^1_0$ is trivialized by two smooth sections $\beta_\pm$ which satisfy $\mathcal{L}_X\beta_\pm =\pm \beta_\pm$. These $1$-forms are just the dual forms to the stable and unstable horocyclic vector fields. As a consequence, we deduce from \eqref{eq:important} that for any $\quad \bullet\in \mathbb N \cup \{+\infty\},$ 
\begin{align*}
    \mathrm{Res}_0^{1,\bullet}(\tau,0)&\cong \mathrm{Res}_0^{0,\bullet}(\tau,1)\oplus \mathrm{Res}_0^{0,\bullet}(\tau,-1)
    \\&\cong\mathrm{Res}^{0,\bullet}_0(X,\tfrac 32)\oplus \mathrm{Res}^{0,\bullet}_0(X,\tfrac 12)\oplus \mathrm{Res}^{0,\bullet}_0(X,-\tfrac 12)\oplus \mathrm{Res}^{0,\bullet}_0(X,-\tfrac 32)
    \\&=\mathrm{Res}^{0,\bullet}_0(X,-\tfrac 12)\oplus \mathrm{Res}^{0,\bullet}_0(X,-\tfrac 32),
 \end{align*}   
where we used again that the Pollicott--Ruelle resonances have non-positive real part.  From the first part of the proof, we know that the term $\mathrm{Res}^{0,1}_0(X,-\tfrac 12)$ contributes to a $\mathrm{dim}(\mathrm{ker}(\Delta_g-\tfrac 14))$-dimensional space of resonant states and to a $2\mathrm{dim}(\mathrm{ker}(\Delta_g-\tfrac 14))$-dimensional space of generalized resonant states. Moreover, the Jordan block is of order at most $2.$ We now investigate the contribution of $\mathrm{Res}^{0,1}_0(X,-\tfrac 32).$ From \cite[Proposition 1.3]{GHW}, we have
$$\mathrm{Res}_0^{0,\infty}(X,-\tfrac 32)=(\mathrm{Res}_0^{0,\infty}(X,-\tfrac 32)\cap \mathrm{ker}(U_-))\oplus U_+\left(\mathrm{Res}_0^{0,\infty}(X,-\tfrac 12)\cap  \mathrm{ker}(U_-)\right), $$
where $U_+\in C^{\infty}(M;TM)$ denotes the stable horocycle operator.
Note that the map $U_+:\mathrm{Res}_0^{0,\infty}(X,-\tfrac 12)\cap  \mathrm{ker}(U_-)\to \mathrm{Res}_0^{0,\infty}(X,-\tfrac 32)$ is injective by \cite[Proposition 1.3]{GHW} since $-\tfrac 32 \notin \mathbb Z$. Moreover, it preserves the Jordan block structure. The second term hence contributes to a $\mathrm{dim}(\mathrm{ker}(\Delta_g-\tfrac 14))$-dimensional space of resonant states and to a $2\mathrm{dim}(\mathrm{ker}(\Delta_g-\tfrac 14))$-dimensional space of generalized resonant states. To conclude the proof of the proposition, we only need to show that $\mathrm{Res}_0^{0,\infty}(X,-\tfrac 32)\cap \mathrm{ker}(U_-)=\{0\}.$ We use \cite[Theorem 3.3, 1)]{GHW} which implies that $\mathrm{Res}_0^{0,\infty}(X,-\tfrac 32)\cap \mathrm{ker}(U_-)$ is isomorphic to $\mathrm{ker}(\Delta_g+(-\tfrac 32)(1-\tfrac{3}{2}))=\mathrm{ker}(\Delta_g+\tfrac 34)=0$ since $\Delta_g$ is a non-negative operator.
\end{proof}
We show that there exists examples of hyperbolic surfaces for which $1/4$ is in the spectrum of the Laplace operator of any genus $G\geq 2$. 
\begin{prop}
    \label{prop:1/4}
    Let $G\geq 2$, then there exists a hyperbolic surface $(\Sigma,g)$ of genus $G$ for which $\tfrac 14 \in \mathrm{Spec}(\Delta_g)$.
\end{prop}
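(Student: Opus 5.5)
The plan is to use continuity of the small eigenvalues of the Laplacian on the moduli space (equivalently Teichmüller space) of genus $G$ hyperbolic surfaces. This space is connected, and the first nonzero eigenvalue $\lambda_1(g)$ of $\Delta_g$ depends continuously on $g$, since the eigenvalues vary continuously under smooth deformation of the metric. By the intermediate value theorem, it then suffices to find one hyperbolic metric of genus $G$ with $\lambda_1<\tfrac14$ and another with $\lambda_1>\tfrac14$. Along a path joining them there is a metric with $\lambda_1=\tfrac14$, and in particular $\tfrac14\in\mathrm{Spec}(\Delta_g)$.

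For the metric with $\lambda_1<\tfrac14$, I would pinch a separating simple closed geodesic of length $\ell\to 0$, splitting the surface into two pieces of genus $\geq1$ (possible since $G\geq 2$). A test function equal to $+1$ on one side and $-c$ on the other, interpolated across the collar, has small Rayleigh quotient. The collar has width about $2\log(1/\ell)$, and the Dirichlet energy of the linear interpolation is of order $\ell/\log(1/\ell)$ in suitable coordinates. So $\lambda_1\to0$ as $\ell\to0$; this is Schoen--Wolpert--Yau/Buser type reasoning, and I would cite Buser's book for it.

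The main obstacle is the other endpoint, a genus $G$ hyperbolic surface with $\lambda_1>\tfrac14$ for every $G\geq 2$. One option is to cite known results: for each $G\geq2$ there exist surfaces with $\lambda_1>\tfrac14$ only asymptotically or probabilistically, so this is unreliable in low genus. A way around it: it is enough to exhibit \emph{some} continuous spectral function that crosses $\tfrac14$, not necessarily $\lambda_1$. I would instead use the $k$-th eigenvalue for a suitable $k$ and compare the degenerate (pinched) limits. When pinching $2G-3$ disjoint geodesics down to a pants decomposition minus one curve, exactly $2G-3$ eigenvalues tend to $0$, by Schoen--Wolpert--Yau: $\lambda_{k}\to0$ for $k\le 2G-3$ and $\lambda_{2G-2}$ stays bounded below. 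Alternatively, one can use the known fact (Otal--Rosas) that $\lambda_{2G-2}>\tfrac14$ for \emph{every} hyperbolic surface of genus $G$, while Buser's construction gives surfaces with $\lambda_{2G-3}$ arbitrarily small. I would then take $k=2G-3$: on a pinched surface $\lambda_{k}<\tfrac14$.

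To find a surface with $\lambda_{k}>\tfrac14$ and close the argument, I would choose $k=1$ and try first a surface with large systole, or appeal to a known example such as the Bolza surface for $G=2$ (with $\lambda_1\approx3.84$), together with covering or gluing arguments for higher genus. If that failed, I would look for a specific symmetric surface of genus $G$ with numerically certified $\lambda_1>\tfrac14$.
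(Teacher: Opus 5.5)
\textbf{Verdict: your proposal has a genuine gap for $G\geq 3$.}

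\textbf{What works.} Your genus-$2$ argument is complete and is essentially the paper's. $\lambda_1$ is continuous on the connected Teichm\"uller space, and the Bolza surface has $\lambda_1\approx 3.84>\tfrac14$. Pinching a separating geodesic gives $\lambda_1<\tfrac14$. The paper instead uses Buser's inequality with a small Cheeger constant, but your min-max test-function bound is equally fine. A side correction: the collar energy of your interpolation is $O(\ell)$ for the profile that is linear in conformal coordinates, or $\asymp 1/\log^2(1/\ell)$ for one linear in the distance coordinate, not $\ell/\log(1/\ell)$. Any bound tending to $0$ suffices, so this is harmless. The intermediate value theorem then gives a genus-$2$ hyperbolic metric with $\lambda_1=\tfrac14$.

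\textbf{The gap.} For $G\geq 3$ you never produce the upper endpoint, and neither branch you sketch closes. Otal--Rosas controls $\lambda_{2G-2}$, which is $>\tfrac14$ on \emph{every} genus-$G$ surface, so it can never cross $\tfrac14$. For $\lambda_{2G-3}$, or for $\lambda_1$, you still need a genus-$G$ surface where it exceeds $\tfrac14$. For large $G$ this is a delicate question, since the supremum of $\lambda_1$ over genus $G$ tends to $\tfrac14$ as $G\to\infty$ (Huber, Cheng), and nothing in your sketch supplies such a surface. Using covers of the Bolza surface to get this endpoint also fails. The spectrum of a cover contains that of the base plus new eigenvalues you do not control, so $\lambda_1$ of a cover can only be smaller.

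\textbf{The missing idea.} Apply coverings to the \emph{conclusion}, not to the endpoint. Take the genus-$2$ surface $(\Sigma_2,g_0)$ and a function $\phi\neq 0$ with $\Delta_{g_0}\phi=\tfrac14\phi$. Any connected degree-$d$ cover $(N,h)$ is hyperbolic with $\chi(N)=-2d$, hence of genus $d+1$. The pullback of $\phi$ is a nonzero $\tfrac14$-eigenfunction of $\Delta_h$. Connected covers of every degree exist, for instance from the surjection $\pi_1(\Sigma_2)\to\ZZ\to\ZZ/d\ZZ$; the paper uses a surjection onto the free group $F_2$ and a transitive action on $d$ points. Taking $d=G-1$ covers every genus $G\geq 2$ at once. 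This is exactly how the paper finishes, and it makes all of your higher-genus spectral considerations unnecessary.
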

\begin{proof}
 Let $\Sigma$ be a closed surface of genus $G\geq 2.$ For a hyperbolic metric $g$ on $\Sigma,$ let $\lambda_1(g)$ denote the first positive eigenvalue of $\Delta_g$. It is known that $g\mapsto \lambda_1(g)$ is continuous when $g$ varies in the Teichm\"uller space, see for instance \cite{BU83}. The Bolza surface is a genus two hyperbolic surface for which $\lambda_1\cong 3.838887$, see \cite{SU17}. On the other hand, Buser's inequality implies that $\lambda_1(g)\leq C(h(g)+h(g)^2)$, where $h(g)$ is the \emph{Cheeger constant} of $g$ and $C>0$ is a universal constant. Since there exists hyperbolic metrics on $\Sigma$ with arbitrarily small Cheeger constant, we deduce that $\lambda_1(g)$ can be made arbitrarily small in the Teichm\"uller space. Therefore, there exists a hyperbolic manifold $(\Sigma,g_0)$ of genus $2$ for which $\lambda_1(g_0)=\tfrac 14$ since the Teichm\"uller space is connected.

 Now, let $\pi:(N,h)\to (M,g_0)$ be a connected cover of $(M,g_0)$ of degree $d\geq 1$. The existence of such a cover is guaranteed since $\pi_1(\Sigma)$ has $F_2$ (the free group with $2$ generators) as a quotient, which has a representation to $S_d$ which is transitive on $\{1,2,\cdots,d\}$.  The Euler characteristic of $N$ is 
 $$ \chi(N)=d\chi(M)=d(2-2\times 2)=-2d.$$
 In particular, the genus $G_N$ of $N$ is equal to
 $G_N=1-\tfrac 12 \chi(N)=d+1. $ Since $\tfrac 14\in \mathrm{Spec}(\Delta_{g_0})$, we deduce that $\tfrac 14\in \mathrm{Spec}(\Delta_h)$ and this provides an example in any genus $G\geq 2.$
\end{proof}
\end{proof}

\section{Generic semisimplicity}
\label{sec:GenSim}
In this section, we prove Theorem~\ref{thm:sim} following a similar strategy as in the proof of Theorem~\ref{theo2}. Let $(\Sigma,g)$ be an orientable connected closed Anosov manifold of dimension $n+1\geq 2.$ For a finite dimensional representation $\rho$ of $\pi_1(M)$, let
\begin{equation*}
        d_k := d^{\nabla_{\rho}}: \Res_0^{k,1}(g,\rho,0) \to \Res_0^{k+1,1}(g,\rho, 0).
    \end{equation*}
The map $d_k$ is well defined by a similar argument as in the proof of Proposition \ref{prop:noJD}. Following \cite[Equation (2.61)]{zeta3}, we consider
\begin{equation}\label{eq:def-pik}
    \pi_k: \Res_0^{k,1}(g,\rho, 0)\cap \ker d_k \to H^k(M,\rho).
\end{equation}
The proof of \cite[Lemma 2.6]{zeta3} extends to our setting and we have
\begin{equation}\label{eq:cddp6}
    d^{\nabla_{\rho}}(\Res_0^{k-1,1}(g,\rho, 0))\subset \ker \pi_k \subset d^{\nabla_{\rho}}(\Res^{k-1,\infty}(g,\rho, 0)).
\end{equation}
In the previous chain of inclusion, we have denoted by 
\begin{equation}
\label{eq:genresno0}
\Res^{k,\infty}(g, \rho,\lambda_0)=\{u\in \mathscr{D}'(M;\mathscr E^k_{\rho}): \exists \ell\in \mathbb{N},\ (\mathcal{L}_{X^\rho}-\lambda_0)^{\ell} u = 0 , \, \WF(u)\subset E_u^* \},
\end{equation}
the set of generalized resonant states which are not necessarily in the kernel of $\iota_X.$
Moreover, the proof of \cite[Lemma 2.7]{zeta3} shows that $\pi_k$ is onto if $\Res_{0}^{*,2n+1-k,1}(g,\rho,0)$ consists of exact forms.

Let us introduce the dual conditions to \eqref{eq:cond-simp} for coresonant states:
\begin{equation}\label{eq:cond-cosimple}
     \Res_0^{*,k,\infty}(g,\rho,0)=\Res_0^{*,k,1}(g,\rho,0),\quad
    d^{\nabla_{\rho^*}}(\Res_0^{*,k,1}(g,\rho,0))=0,\quad k=0,1,2,\cdots, 2n.
\end{equation}
Below is our key lemma for induction.
\begin{lem}\label{lem:ind}
Let $0\leq k \leq n$. Suppose both \eqref{eq:cond-simp} and \eqref{eq:cond-cosimple} hold for $j$-forms for $j=0,1,\cdots, k-1$, then we have
    \begin{equation}\label{eq:lower}
        \dim ( \Res_0^{k,1}(g,\rho,0) \cap \ker d_k ) = \sum_{j=0}^{\lfloor k/2\rfloor} \dim H^{k-2j}(M,\rho) 
    \end{equation}
and
\begin{equation}\label{eq:colower}
        \dim ( \Res_0^{*,k,1}(g,\rho,0) \cap \ker d_k ) = \sum_{j=0}^{\lfloor k/2\rfloor} \dim H^{k-2j}(M,\rho^*) .
    \end{equation}
   
\end{lem}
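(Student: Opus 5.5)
We argue by induction on $k$, following the scheme of \cite[Lemma 2.9 / Proposition 2.10]{zeta3}, and prove \eqref{eq:lower} and \eqref{eq:colower} simultaneously; the coresonant statement follows by applying the same argument to $(-X,\rho^*)$, since \eqref{eq:cond-cosimple} is exactly \eqref{eq:cond-simp} for that pair. The basic tool is the map $\pi_k$ from \eqref{eq:def-pik}. By rank–nullity,
\begin{equation*}
\dim(\Res_0^{k,1}\cap\ker d_k)=\dim\ker\pi_k+\dim\mathrm{Ran}\,\pi_k ,
\end{equation*}
so it suffices to establish two facts. First, $\pi_k$ is onto $H^k(M,\rho)$. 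Second,
\begin{equation*}
\dim\ker\pi_k=\sum_{j\ge1}\dim H^{k-2j}(M,\rho).
\end{equation*}

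For the kernel we use \eqref{eq:cddp6}. Since \eqref{eq:cond-simp} holds in degree $k-1$, we have $\Res^{k-1,\infty}_0=\Res^{k-1,1}_0$. We then need the analogue for resonant states not in $\ker\iota_X$. Writing $u=\alpha\wedge u_1+u_0$ with $u_0,u_1\in\ker\iota_X$, the components satisfy a triangular system. Hence semisimplicity in degrees $k-1$ and $k-2$ gives $\Res^{k-1,\infty}=\Res^{k-1,1}$, and moreover
\begin{equation*}
d^{\nabla_\rho}\Res^{k-1,1}=d^{\nabla_\rho}\Res_0^{k-1,1}+d^{\nabla_\rho}(\alpha\wedge\Res_0^{k-2,1}).
\end{equation*}
Using $d^{\nabla_\rho}\Res_0^{k-1,1}=0$ and $d^{\nabla_\rho}\Res_0^{k-2,1}=0$ from \eqref{eq:cond-simp}, this reduces to $d\alpha\wedge\Res_0^{k-2,1}$. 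Both inclusions in \eqref{eq:cddp6} then collapse, giving
\begin{equation*}
\ker\pi_k=d\alpha\wedge\Res_0^{k-2,1}(g,\rho,0).
\end{equation*}
Next we show that $u\mapsto d\alpha\wedge u$ is injective on $\Res_0^{k-2,1}$ for $k\le n$. This follows from the Lefschetz property of $d\alpha$ on $\ker\iota_X$, which is symplectic on $E_u\oplus E_s$: wedging with $d\alpha$ is injective from degree $k-2$ when $k-2<n$. Therefore $\dim\ker\pi_k=\dim\Res_0^{k-2,1}$. By the induction hypothesis (the full dimension formula \eqref{eq:cond-dim} in degree $k-2$, which is \eqref{eq:lower} there since $d_{k-2}=0$), this equals $\sum_{j\ge0}\dim H^{k-2-2j}(M,\rho)$, as required.

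For surjectivity we invoke the paper's remark that the proof of \cite[Lemma 2.7]{zeta3} gives $\pi_k$ onto as soon as $\Res_0^{*,2n+1-k,1}$ consists of exact forms. By the pairing \eqref{eq:nondeg} and the duality $\dim\Res_0^{*,j,\infty}=\dim\Res_0^{2n-j,\infty}$, together with \eqref{eq:cond-cosimple} in degrees $\le k-1$, the needed exactness should follow as in \cite{zeta3}. Concretely, a coresonant state of degree $2n+1-k$ in the full (non-$\iota_X$-reduced) sense splits as $\alpha\wedge w_1+w_0$. The hypotheses on low degrees, transported to high degrees via the Lefschetz/Hodge star type isomorphism $\wedge (d\alpha)^{n-j}$ and \eqref{eq:adjointd}, make such a state $d^{\nabla_{\rho^*}}$ of something, namely $\alpha\wedge(\cdot)$ applied to a closed coresonant state.

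The main obstacle is this exactness step. It requires carefully identifying high-degree coresonant states with low-degree ones, via $(d\alpha)^{n-j}$ and the nondegenerate pairing $h_\alpha$, so that the inductive hypotheses in degrees $<k$ can be used. One must also track why the hypothesis is needed only up to $k-1$ rather than $k$. We would try first to reproduce \cite[Lemma 2.7]{zeta3} verbatim with $d$ replaced by $d^{\nabla_{\rho^*}}$, checking that no step used unitarity or triviality of the twist.
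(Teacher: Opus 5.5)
Your structure is the paper's: rank--nullity for $\pi_k$, the identification $\ker\pi_k=d\alpha\wedge\Res_0^{k-2,1}(g,\rho,0)$ from \eqref{eq:cddp6} together with semisimplicity and closedness in degrees $k-1,k-2$, and surjectivity via \cite[Lemma 2.7]{zeta3}. Your remark that $d\alpha\wedge\cdot$ is injective on $\Res_0^{k-2,1}$ for $k\le n$ is correct, and the paper uses it without comment. One small slip in the kernel step: the left inclusion of \eqref{eq:cddp6} only gives $d^{\nabla_\rho}\Res_0^{k-1,1}(g,\rho,0)=0\subset\ker\pi_k$. So the two inclusions do not simply ``collapse'' to $d\alpha\wedge\Res_0^{k-2,1}\subset\ker\pi_k$. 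That inclusion comes instead from $d\alpha\wedge u=d^{\nabla_\rho}(\alpha\wedge u)$ for closed $u$, with $\alpha\wedge u\in\mathcal D'_{E_u^*}$.

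The genuine gap is surjectivity, which you leave as ``should follow''. The detour you plan is unnecessary: the non-reduced splitting $\alpha\wedge w_1+w_0$, the pairing $h_\alpha$, the dimension duality and \eqref{eq:adjointd} are not needed. The splitting also targets the wrong space, since the hypothesis of \cite[Lemma 2.7]{zeta3} is about the $\iota_X$-reduced space $\Res_0^{*,2n+1-k,1}(g,\rho,0)$, which has no $\alpha$-component. The missing fact, which the paper takes from \cite[(6-2)]{CD24}, is
\begin{equation*}
\Res_0^{*,2n+1-k,1}(g,\rho,0)=\Res_0^{*,k-1,1}(g,\rho,0)\wedge(d\alpha)^{n+1-k}.
\end{equation*}
It is immediate, with no pairing argument. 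Forms killed by $\iota_X$ restrict isomorphically to $\Lambda^\bullet(\ker\alpha)^*$, and $d\alpha|_{\ker\alpha}$ is symplectic because $\alpha$ is contact. By pointwise hard Lefschetz, $(d\alpha)^{n+1-k}\wedge\cdot$ is therefore a smooth bundle isomorphism from reduced $(k-1)$-forms to reduced $(2n+1-k)$-forms (this uses $k\le n$). Since $\iota_Xd\alpha=0$ and $\mathcal L_Xd\alpha=0$, this map commutes with $\mathcal L_{X^{\rho^*}}$ and preserves wavefront sets, so it maps coresonant states bijectively onto coresonant states. Now take $w\in\Res_0^{*,k-1,1}(g,\rho,0)$; it is closed by \eqref{eq:cond-cosimple} in degree $k-1$, so
\begin{equation*}
w\wedge(d\alpha)^{n+1-k}=d^{\nabla_{\rho^*}}\bigl(\alpha\wedge(d\alpha)^{n-k}\wedge w\bigr)
\end{equation*}
is exact, and \cite[Lemma 2.7]{zeta3} then makes $\pi_k$ onto. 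This also answers your question about why hypotheses up to degree $k-1$ suffice. Surjectivity only uses closedness of coresonant $(k-1)$-forms. The kernel computation only uses degrees $k-1$ and $k-2$, plus the lemma in degree $k-2$ to evaluate $\dim\Res_0^{k-2,1}$.
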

\begin{proof}
We only prove \eqref{eq:lower} as the argument to show \eqref{eq:colower} is similar.  Consider $\pi_k$ defined in \eqref{eq:def-pik}. By \eqref{eq:cddp6}, and the fact that there is no Jordan block on $(k-1)$-forms, we deduce
\begin{equation*}
    \ker \pi_k \subset d^{\nabla_{\rho}}(\Res^{k-1,1}(g,\rho,0))= \Res_0^{k-2,1}(g,\rho,0) \wedge d\alpha.
\end{equation*}
For the equality, we used that $\Res^{k-1,1}(g,\rho,0)=\Res^{k-1,1}_0(g,\rho,0)\oplus \Res^{k-2,1}(g,\rho,0)\wedge \alpha $ and the fact that resonant states on $(k-2)$-forms and on $(k-1)$-forms are closed by hypothesis. Then it follows that
\begin{equation*}
    \ker \pi_k =\Res_0^{k-2,1}(g,\rho,0) \wedge d\alpha,
\end{equation*}
since the other inclusion is obvious.
On the other hand, one has (see \cite[(6-2)]{CD24})
\begin{equation}
\label{eq:exact}
    \Res_{0}^{*,2n+1-k,1}(g,\rho,0)= \Res_{0}^{*,k-1,1}(g,\rho, 0)\wedge (d\alpha)^{n+1-k}
\end{equation}
is exact since the coresonant states on $(k-1)$-forms are closed by assumption. So $\pi_k$ is onto by \cite[Lemma 2.7]{zeta3}. In particular, applying the rank-nullity theorem to $\pi_k$ gives
$$\dim ( \Res_0^{k,1}(g,\rho,0) \cap \ker d_k )=\dim ( \Res_0^{k-2,1}(g,\rho,0) )+\dim H^{k}(M,\rho). $$
We deduce \eqref{eq:lower} using an induction. 
\end{proof}

We now show Theorem~\ref{thm:sim}. 
\begin{proof}
Let $\mathcal V$ be an open and connected set of Anosov metrics.
We define 
\begin{equation}
    \mathcal{U}=\{g \in \mathcal{V} : g \text{ satisfies } \eqref{eq:cond-simp} \text{ and } \eqref{eq:cond-cosimple}\}.
\end{equation}
Our goal is to show that $\mathcal U\subset \mathcal V$ is open and dense whenever it is not empty.

First we show $\mathcal{U}$ open. For this, let $g_0\in \mathcal{U}$. By the perturbation theory of Ruelle resonances, see \cite{Bon}, the dimension of the space of generalized resonant states can only decrease for a perturbation. In other words, in a neighborhood of $g_0$ we have
\begin{equation}\label{eq:ind-leq}
    \dim \Res_0^{k,\infty} (g,\rho,0 )\leq \dim \Res_0^{k,1}(g_0,\rho,0),\quad k=0,1,\ldots,2n.
\end{equation}
Using Lemma~\ref{lem:ind}, we now show that $g$ must satisfy \eqref{eq:cond-simp} and \eqref{eq:cond-cosimple}. Indeed, for $k=0$, the assumption of Lemma~\ref{lem:ind} is empty, and we have
$$\dim ( \Res_0^{0,\infty}(g,\rho,0)) \geq \dim ( \Res_0^{0,1}(g,\rho,0) \cap \ker d_0 )= \dim H^{0}(M,\rho), $$
and thus by \eqref{eq:ind-leq} we have $\dim ( \Res_0^{0,\infty}(g,\rho,0))=\dim ( \Res_0^{0,1}(g,\rho,0) \cap \ker d_0 )= \dim H^{0}(M,\rho)$. This shows that \eqref{eq:cond-simp} holds for $k=0$. Similarly, \eqref{eq:cond-cosimple} holds for $k=0$. Next, Lemma~\ref{lem:ind} shows that $$
\dim ( \Res_0^{1,\infty}(g,\rho,0))\geq \dim ( \Res_0^{1,1}(g,\rho,0) \cap \ker d_1 )= \dim H^{1}(M,\rho).$$
Thus, by \eqref{eq:ind-leq} we know \eqref{eq:cond-simp} holds for $k=1$. Inductively, this gives that \eqref{eq:cond-simp} and \eqref{eq:cond-cosimple} hold for all $k=0,\cdots, 2n$. This proves that $\mathcal{U}$ is open.

Now suppose $\mathcal{U}$ is not empty, say $g_0\in \mathcal{U}$. We prove $\mathcal{U}$ is dense in $\mathcal{V}$. Let $g\in \mathcal{V}$, we choose a piecewise linear family of metrics $g(t)$ such that $g(0)=g_0$ and $g(1)=g$. To construct such a path, first consider a path $\tilde{g}(t)$ in $\mathcal V$ such that $\tilde{g}(0)=g_0$ and $\tilde{g}(1)=g_1$. Since $\mathcal V$ is open and $[0,1]$ is compact, there exists $\epsilon>0$ such that $B(\tilde g(t),\epsilon)\subset \mathcal V$ for all $t\in [0,1]$. Then we take $0=t_0<t_1<\cdots< t_N=1$ such that the distance between $\tilde{g}(t_i)$ and $\tilde{g}(t_{i+1})$ is less than $\epsilon/2$ for $i=0,1,\ldots, N-1$. In particular, the piecewise linear path joining $\tilde{g}(t_i)$ and $\tilde{g}(t_{i+1})$  for $i=0,\ldots, N-1$ stays in $\mathcal V.$

Consider the first linear path $(g_0, \tilde g(t_1))$ from $g_0$ to $\tilde g(t_1)$. Using Lemma \ref{lem:5.2} and the fact that the spectral projector varies real analytically on this segment, we deduce that for any $k \in [0,n]$, the subset 
$$\mathcal{A}_k:=\{ g\in (g_0, \tilde g(t_1))\mid \dim ( \Res_0^{k,1}(g,\rho,0)) \geq \sum_{j=0}^{\lfloor k/2\rfloor} \dim H^{k-2j}(M,\rho)+1 \} $$
is either equal to $(g_0, \tilde g(t_1))$ or finite. Since $g_0\in \mathcal U$ and $\mathcal U$ is open, this implies that $\mathcal A_k$ is finite for any $k \in [0,n]$. Applying the same argument to the dual representation then gives
\begin{equation*}
\{g\in (g_0, \tilde g(t_1))\mid g\notin \mathcal U\} \text{ is finite.}
\end{equation*}
Up to changing the endpoint $\tilde g(t_1)$ by a close point on $(g_0, \tilde g(t_1))$, we can suppose that $\tilde g(t_1)\in \mathcal U$. Applying the same argument to the second linear path $(\tilde g(t_1),\tilde g(t_2))$ shows that 
$$\{g\in (\tilde g(t_1), \tilde g(t_2))\mid g\notin \mathcal U\} \text{ is finite.} $$
This shows that there are at most finitely many points on the piecewise linear path which are not in $\mathcal U$, and hence $\mathcal{U}$ is dense in $\mathcal{V}$.

Since \eqref{eq:cond-simp} implies \eqref{eq:cond-cosimple} by duality, this concludes the proof of Theorem~\ref{thm:sim}.
\end{proof}

\subsection{Perturbation of trivial representation}
In this section we discuss the perturbation of the trivial representation and prove Proposition~\ref{prop:per-triv}.

\begin{proof}[Proof of Proposition~\ref{prop:per-triv}]
By the assumption that $\dim H_1(M,\CC)\geq 2$,
we can take two elements $a_1,a_2\in H_1(M,\mathbb{Z})$ linearly independent in $H_1(M,\mathbb{Z})\otimes \CC =H_1(M,\CC)$ and two linear forms $f_1,f_2:H_1(M,\CC)\to \CC$ such that $f_i(a_j)=\delta_{ij}$ for $i,j=1,2$.

We consider the following holomorphic family of representations 
\begin{equation*}
    \rho(z_1,z_2): \pi_1(M)\to  H_1(M,\CC) \to \mathrm{GL}_r(\CC),\quad z_1,z_2\in \CC,
\end{equation*}
where the first map is sending a homotopy class to its homology class and the second map is given by $e^{z_1f_1}e^{z_2f_2} \id_r$. Then $\rho(z_1,z_2)$ is a holomorphic family of representations for $(z_1,z_2)\in \CC^2$. Note that for $z_1=z_2=0$, one has $\rho(0,0)=\rho_{\mathrm{triv}}\otimes \id_r$. Consider the determinant of the matrix $\widetilde{\Pi}^0_{\rho(z_1,z_2)}(0)\widetilde{\mathcal{L}}_{X^{\rho(z_1,z_2)}}\widetilde{\Pi}^0_{\rho(z_1,z_2)}(0)$. It is a holomorphic function of two variables which vanishes at $(z_1,z_2)=(0,0)$. By Hartogs theorem, its zero set cannot have isolated points. Hence there exists $(z_1,z_2)\neq(0,0)$ in a neighbourhood of $(0,0)$ such that
\begin{equation*}
    \Res_0^{0,\infty}(g,\rho(z_1,z_2),0)\neq 0.
\end{equation*}    
On the other hand, $H^0(M,\rho(z_1,z_2))=0$ for $(z_1,z_2)\neq (0,0)$ in the neighborhood of $(0,0)$ because (see Lemma~\ref{lemm:coho})
\begin{equation*}
     H^0(M,\rho)=H^0(M,\mathcal E_{ \rho}) =\{v\in \CC^r \mid \forall \gamma \in \pi_1(M),\ \rho(\gamma)v=v\}.
\end{equation*}
Suppose that $v\in \CC^r$ satisfies $\rho(\gamma)v=v$ for any $ \gamma \in \pi_1(M)$. Since the map $\pi_1(M)\to H_1(M,\ZZ)$ is surjective, we can consider $\gamma_i\in \pi_1(M)$ to be a pre-image of $a_i$ for $i=1,2$. Then we have 
$$\rho(z_1,z_2)(\gamma_i)v=e^{z_i}v=v, \quad i=1,2, $$
and since $(z_1,z_2)$ is close to $(0,0)$, this implies $(z_1,z_2)=(0,0)$ whenever $v\neq 0.$ This shows that $H^0(M,\rho(z_1,z_2))=0$ and concludes the proof.
\end{proof}

\def\arXiv#1{\href{http://arxiv.org/abs/#1}{arXiv:#1}}

\end{document}